\numberwithin{equation}{section}
\numberwithin{figure}{section}
\theoremstyle{plain}
\newtheorem{thm}{\protect\theoremname}[section]
\newtheorem*{thm*}{\protect\theoremname}
  \theoremstyle{plain}
  \newtheorem{lem}[thm]{\protect\lemmaname}
  \newtheorem{cor}[thm]{\protect\corname}
  \theoremstyle{definition}
 \newtheorem{defn}[thm]{\protect\definitionname}
  \newtheorem{conjecture}{Conjecture}
  \newtheorem{rem}{Remark}
  \providecommand{\definitionname}{Definition}
  \providecommand{\lemmaname}{Lemma}
  \providecommand{\theoremname}{Theorem}
  \providecommand{\corname}{Corollary}
  \newcommand{\dlower}{\underline{d}}
  \newcommand{\dupper}{\overline{D}}
\newcommand{\E}{\mathbb{E}}	
\newcommand{\cA}{\mathcal{A}}
\newcommand{\cB}{\mathcal{B}}
\newcommand{\cF}{\mathcal{F}}
\newcommand{\cP}{\mathcal{P}}
\newcommand{\cM}{\mathcal{M}}
\renewcommand{\P}{\mathbb{P}}
\newcommand{\R}{\mathbb{R}}
\newcommand{\eps}{\epsilon}
\newcommand{\abs}[1]{\left\lvert#1\right\rvert}
\newcommand{\norm}[1]{\| #1\|}
\newcommand{\gibbs}[1]{\langle #1 \rangle}
\newcommand{\Var}{\operatorname{Var}}
\newcommand{\supp}{\operatorname{supp}}
\renewcommand{\liminf}{\varliminf}
\renewcommand{\limsup}{\varlimsup}
\newcommand{\spin}{\varepsilon}
\newcommand{\CUT}{{\sf CUT}}
\def\Mcut{{\sf MCUT}}
\def\ER{Erd\H{o}s-R\'enyi } 
\def\Rg #1{G^{\mbox{\tiny\rm Reg}}(N, #1)}
\def\malphacut{{\sf Mcut}_{\alpha}}
\def\de{{\rm d}}
\definecolor{subh}{RGB}{255,0,0}
\title[Unbalanced Cut and Generalized SK]
{On the unbalanced cut problem and the generalized Sherrington-Kirkpatrick model}
\author{Aukosh Jagannath}
\address[Aukosh Jagannath]{Department of Statistics and Actuarial Science and Department of Mathematics, University of Waterloo}
\email{a.jagannath@uwaterloo.ca}
\author{Subhabrata Sen}
\address[Subhabrata Sen]{Department of Statistics, Harvard University}
\email{subhabratasen@fas.harvard.edu}
\subjclass[2010]{Primary: 90C27, 82D30, 05C80, 49S05;  Secondary: 82B44, 49J45}
\keywords{random graphs, unbalanced cuts, spin glasses, gamma convergence}
\date{\today}
\begin{document}

\begin{abstract}
We establish a strict asymptotic inequality between a class 
of graph partition problems on the sparse \ER and random regular graph ensembles with the same
average degree. 
Along the way, we establish a variational representation for the ground state energy
for generalized mixed $p$-spin glasses and derive strict comparison inequalities
for such models as the alphabet changes.
\end{abstract}

\maketitle

\section{Introduction}
Consider the following graph partition problem, called the \emph{unbalanced cut problem}.
Let $G=(V,E)$ be a graph and let $V_{1},V_{2}\subset V$
be a partition of the vertex set, $V$, such that 
\begin{align*}
\abs{V_{1}}  =\alpha\abs{V} \qquad \text{ and } \qquad
\abs{V_{2}}  =(1-\alpha)\abs{V},
\end{align*}
where $0<\alpha<1/2$ is a fixed number.
Let $\CUT(V_{1},V_{2})$ denote the number of edges joining $V_1$ to $V_2$. 
We are interested in the maximum
of this quantity over all such partitions, which we denote by 
\begin{align}
\Mcut_{\alpha}(G)=\max_{\substack{\abs{V_{1}}=\alpha\abs{V}\\
\abs{V_{2}}=(1-\alpha)\abs{V}
}
}\CUT(V_{1},V_{2}). \label{eq:Mcut_def}
\end{align}
Observe, for example, that when $\alpha=1/2$ this is the maximum bisection problem (Note that the quantity is only well-defined when $\alpha$ is rational).

We aim to compare this quantity asymptotically between two well-known 
random graph models. The first ensemble we consider is
the sparse \ER random graph, $G(N,\frac{d}{N})$, where each
edge is added independently with probability $d/N$, where $d$ is
a fixed constant. The second ensemble we
consider is the random $d$-regular graph, $\Rg{d}$, where 
a $d$-regular graph on $N$ vertices is selected uniformly at random. 
These random graphs are typically sparse: such a graph on $N$ vertices has $O(N)$ edges with high probability.
Our main result is a strict comparison between the unbalanced cut problem on these two ensembles in \emph{the mean field approximation}, i.e., for asymptotically large degrees. 
\begin{thm}
\label{thm:cut_difference}
For any $0< \alpha < \frac{1}{2}$, 
there is a constant $C(\alpha)>0$ such that
\begin{align}
\liminf_{d\to\infty}\liminf_{N\to\infty}\frac{\Mcut_{\alpha}(G(N,\frac{d}{N}))-\Mcut_{\alpha}(\Rg{d})}{\sqrt{d}N}\geq C(\alpha) \label{eq:cut_difference_ineq}
\end{align}
almost surely.
\end{thm}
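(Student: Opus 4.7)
The first step is to translate the cut problem into a spin-glass optimization. Encoding the bipartition as $\sigma\in\{-1,+1\}^N$ with $\sigma_i=+1$ iff $i\in V_1$ gives $\sum_i\sigma_i=(1-2\alpha)N=:mN$, and the identity $\mathbbm{1}[\sigma_i\neq\sigma_j]=(1-\sigma_i\sigma_j)/2$ yields
\[
\Mcut_\alpha(G)=\frac{|E(G)|}{2}-\frac{1}{4}\min_{\sigma:\,\sum\sigma_i=mN}\sigma^T A(G)\sigma.
\]
Since $|E(\Rg{d})|=dN/2$ and $|E(G(N,d/N))|=dN/2+o(\sqrt{d}\,N)$ almost surely, and since the mean contribution $(d/N)\sigma^T(J-I)\sigma=d(m^2N-1)$ on the magnetization slice is identical on both ensembles, the difference in \eqref{eq:cut_difference_ineq} reduces, at scale $\sqrt{d}\,N$, to comparing $\min_\sigma\sigma^T M(G)\sigma$ with $M(G):=A(G)-\E[A(G)]$.

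The second step is to isolate the structural difference between the two centered matrices. A Lindeberg-type universality argument, in the spirit of Dembo--Montanari--Sen, replaces $M$ at scale $\sqrt{d}\,N$ by a Gaussian matrix with matching covariance: for $G(N,d/N)$ this is a Wigner matrix with entrywise variance $d/N$, while for $\Rg{d}$ the degree constraint enforces $M\mathbf{1}=0$, so the limit is the same Wigner matrix restricted to $\mathbf{1}^\perp$. Decomposing $\sigma=m\mathbf{1}+\sigma^\perp$ with $\sigma^\perp\perp\mathbf{1}$,
\[
\sigma^T M\sigma=\sigma^{\perp T}M\sigma^\perp+2m\langle M\mathbf{1},\sigma^\perp\rangle+m^2\mathbf{1}^T M\mathbf{1}.
\]
The quadratic piece is common to both ensembles (up to a negligible low-rank correction) and yields the usual SK contribution. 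The last term is a single Gaussian scalar of size $O(\sqrt{Nd})=o(\sqrt{d}\,N)$. The middle linear term is present only for $G(N,d/N)$: there $M\mathbf{1}$ is Gaussian of norm $\Theta(\sqrt{Nd})$, so it acts as an independent Gaussian external field and contributes at the full order $\sqrt{d}\,N$ whenever $m\neq 0$.

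The third and decisive step is to recognize the (rescaled) limits of the two minima as ground-state energies of two generalized mixed $p$-spin models on the magnetization slice $\{\sum\sigma_i=mN\}$---one with an extra independent Gaussian external field (for $G(N,d/N)$), one without (for $\Rg{d}$)---and to use the variational (Parisi-type) representation announced in the abstract to obtain a \emph{strict} comparison. The constant $C(\alpha)>0$ then arises as the strictly positive gap between the two variational values; it vanishes as $\alpha\to 1/2$, consistent with the disappearance of the field as $m\to 0$ and the known asymptotic equality of $\MaxCut$ on the two ensembles. The main obstacle is precisely this last step: one must extend the Parisi formula to the relevant generalized mixed $p$-spin models (constrained magnetization, with or without external field), and then prove a \emph{strict} monotonicity statement---most naturally via a Guerra-type interpolation whose derivative is strictly sign-definite for $m\neq 0$---showing that switching on the field strictly lowers the ground-state energy. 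The ``strict comparison inequalities for such models as the alphabet changes'' mentioned in the abstract should provide the general mechanism that yields this gap.
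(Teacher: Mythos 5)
Your Steps 1 and 2 coincide in substance with the paper's reduction: concentration plus the universality results of \cite{sen2016optimization} replace the two graph problems by two Gaussian ground-state problems on the magnetization slice (Lemma \ref{lem:concentration} and Lemma \ref{lemma:gaussiancomp}), and your decomposition of $\sigma^T M\sigma$ along $\mathbf{1}$ and $\mathbf{1}^\perp$ is exactly the paper's pair $H_0$ (\ER) versus $H_1$ (regular), related by the change of variables $\tau_i=\sigma_i-\sqrt{v}(2\alpha-1)$ that turns the regular-graph problem into a generalized SK model with shifted alphabet. One inaccuracy: the ``field'' $M\mathbf{1}$, i.e.\ $g_i=\sum_j J_{ij}/\sqrt{N}$, is \emph{not} independent of the coupling matrix; this is harmless for the reduction, but it forecloses simple field-monotonicity shortcuts and is precisely why the correct framework is a shifted-alphabet generalized SK model rather than SK plus an independent external field. (Relatedly, your phrase ``switching on the field strictly lowers the ground-state energy'' has the direction reversed relative to your own decomposition, in which the \ER side carries the field and has the larger maximum.)

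The genuine gap is your Step 3, which you flag as the main obstacle but do not carry out, and the route you sketch does not suffice as stated. A Guerra/Slepian-type interpolation between the two Hamiltonians does have a sign-definite derivative: at inverse temperature $\beta$ it is proportional to $-\beta(2\alpha-1)^2\frac{1-\sqrt{v}}{\sqrt{v}}\,\E\left[T-\gibbs{R_{12}}_v\right]$ with $\abs{R_{12}}\le T$ (Lemma \ref{lem:int_by_parts}), but sign-definiteness alone yields only $C(\alpha)\ge 0$. To reach the ground state one must take $\beta\to\infty$ of $\frac1\beta\bigl(F_N(0,\beta;\alpha)-F_N(1,\beta;\alpha)\bigr)$, and the integrand then involves $\beta\,\E\left[T-\gibbs{R_{12}}_v\right]$; since the overlap deficit itself is expected to vanish (like $1/\beta$) at zero temperature, strictness hinges on the quantitative estimate $\liminf_{\beta\to\infty}\liminf_{N\to\infty}\beta\,\E\left[T-\gibbs{R_{12}}_v\right]>0$. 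That estimate is exactly Theorem \ref{lem:gamma_conv_application}, the paper's main technical contribution, and it is not a soft consequence of ``extending the Parisi formula'': it is proved by (i) differentiating Panchenko's constrained Parisi formula in $\beta$ and identifying the derivative with $\beta\int(\xi(T)-\xi(t))\,d\mu_\beta$ for the finite-temperature minimizer (Theorem \ref{thm:beta-deriv-overlap}), a step where the Auffinger--Chen cancellation fails once the alphabet is not $\{\pm1\}$, which is why the paper develops annealed solutions and the $\Gamma$-convergence of $\cP_{\beta,T}$; (ii) precompactness and convergence of the rescaled minimizers $\beta\mu_\beta([0,t])dt$ to a zero-temperature minimizer $\nu_*$ (Theorem \ref{thm:conv-minimizers-thm}); and (iii) the non-degeneracy statement $\nu_*\neq0$ (Lemma \ref{thm:ground_state_non-zero}), established by a contradiction argument on the first variation $G_v$, showing $G_v(T)=G_v'(T)=0$ while $G_v''(t)\to\infty$ as $t\uparrow T$ if $\nu_*=0$. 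Without an argument of this kind—some mechanism forcing a uniform-in-$N$, order-$1/\beta$ overlap deficit—your interpolation does not produce a strictly positive constant, so the proposal as written does not establish the theorem.
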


The  novelty in this inequality is the fact that $C(\alpha)$ is strictly positive. Indeed, for the maximum bisection problem ($\alpha=1/2$) it is known that that this difference is in fact zero \cite{dembo2015extremal}.
From a combinatorial perspective, this inequality is surprising: intuitively, it suggests
that the rigidity of the edge structure of random regular graphs, in comparison to that of \ER graphs, has macroscopic ramifications for cut problems. 
The curious reader might also wonder if $C(\alpha)$ is 
in fact the sharp constant. We do not believe that this is the case. Instead, our
approach yields a natural conjecture regarding the sharp constant. 
in terms of a minimizer of $\cP_T$. We discuss this conjecture in Section \ref{sec:conj1}.   

The core of our approach is a connection between
the unbalanced cut problem --- \emph{a priori} a question of pure combinatorics--- to
the ground state energy of the \emph{Generalized Sherrington-Kirkpatrick} model---\emph{a priori} a question
of statistical physics.  Before presenting this connection,
let us first place it in context.

\begin{rem}
If instead of $\Mcut_{\alpha}(G)$ one considers the minimum $\alpha$-cut problem, i.e., taking a minimum in \eqref{eq:Mcut_def} instead of a maximum, the inequality \eqref{eq:cut_difference_ineq} is reversed in the obvious way with no change to the proof.
\end{rem}

\subsection{Background}
\label{subsection:background}
Graph partition problems are classical combinatorial optimization problems, having applications in Computer Science, 
Statistics and Machine Learning \cite{newman2013community, DiazReview,buluc2016partitioning}. These problems are described as follows. 
Given a graph $G= (V,E)$, we seek to divide the set of vertices into two or more parts 
such that the number of edges between the distinct parts is optimized.  
For example, the well-known MaxCut problem seeks to partition the vertex set into two parts, 
$V_1$ and $V_2$, such that the number of edges connecting the two parts, $\CUT(V_1,V_2)$, is maximized. 
Another example  is the maximum bisection problem,  $\Mcut_{1/2}(G)$.
The study of these problems in the
sparse regime has received much attention
from the combinatorics community \cite{Alon97,Bollobas84,Diaz07}, though
they remain very challenging.
Graph partition problems have also been studied extensively in the physics literature
 as they are predicted to lie in a canonical class of models called \emph{spin glasses}. 

Connections between spin glasses
and combinatorial optimization problems are by now classical observations. 
This perspective has received a tremendous amount of attention
in the physics, mathematics, and combinatorics literatures. It is impossible
to provide here anywhere near a complete survey of this literature. Instead we point the reader to the texts
\cite{mezard2009information, MPV}. Although many of the predictions of the physics literature
have been verified, we are very far from understanding the full picture.
For a sample of recent, rigorous results in this direction see \cite{aldous2001assignment,dss_3,nair2005assignment, steele1997optimization}.

In the setting of graph partition problems, this connection goes back at least to the work of Fu--Anderson \cite{fu1986application}. 
 Recently, there has been significant progress in formalizing this connection. 
First, Bayati--Gamarnik--Tetali \cite{BGT}  
explored this connection by using a sub-additivity argument to establish the existence of a deterministic 
limit for the MaxCut on sparse \ER and random regular graphs. In this light, 
it is natural to study this deterministic limiting value as a function of the degree, $d$.

In the large degree limit, or mean field approximation, {it is not hard to see that the leading order contribution is of order $d$ and is essentially  the expected value of the objective function.
Evidently, the heart of the matter is then in subsequent terms of the expansion in $d$. Indeed, the next term, often of order $\sqrt{d}$, is highly nontrivial, and related to the ground state energy of mean field spin glasses.}
This idea was partially formalized by Dembo, Montanari, and one of the authors in  \cite{dembo2015extremal}, 
where it was shown that asymptotically
first in the vertex number and then in the degree, the normalized
MaxCut, maximum bisection, and the minimum bisection of the \ER and 
the random $d$-regular graph ensembles are equal to second order in $d$.
Again, the first order contribution is that of a random labeling 
of the vertices, $d/4$. The second order term, of order $\sqrt{d}$, is (essentially)
the ground state energy of the Sherrington-Kirkpatrick model \cite{sherrington1975solvable}.

In subsequent work, one of the authors \cite{sen2016optimization} generalized this result 
to a family of combinatorial optimization problems, where the objective is of a tensorial nature. 
For a general class of these problems on \ER or random regular hypergraphs, 
a similar asymptotic appears, where this time the order $\sqrt{d}$ term is the ground 
state energy of a suitably chosen spin glass model. {
From this perspective, it is natural to believe that for a wide class of these problems,
the optimal value should be the same on these two ensembles, up to $o(\sqrt{d})$ corrections.
To this end, \cite{sen2016optimization} derives broad sufficient conditions for the normalized maxima to have the same value, up to lower order contributions in $d$.  }
{In this paper, we show, surprisingly, that this belief is flawed. 
Theorem \ref{thm:cut_difference} establishes that even a small perturbation of the maximum bisection problem has widely different behavior on \ER and random regular graphs. }

Theorem \ref{thm:cut_difference} is also significant for a number of conceptual reasons. First, it establishes a strict inequality between these statistics on \ER and random regular graphs--- which is difficult to establish using purely combinatorial techniques. Second,
 aside
from solving an interesting question of combinatorics, it leads us to resolve an important  question
of  independent interest in the theory of spin glasses, namely the ground state energy of the generalized mixed $p$-spin glass model. {Resolving these spin-glass questions is in fact our main technical contribution in this paper.}

\subsection{Generalized mixed $p$-spin models and their connection to Theorem \ref{thm:cut_difference}}
At the heart of \prettyref{thm:cut_difference} is a connection between the unbalanced cut problem and what are called   \emph{Generalized mixed $p$-spin models} which were introduced by Panchenko in \cite{panchenko2005generalized}. These are natural
generalizations of the Ising $p$-spin model \cite{derrida1981random} to the case where the spins take values in a  finite alphabet 
$\Sigma\subset\R$. (The Ising $p$-spin model corresponds to $\Sigma=\{\pm1\}$.)

More precisely, let $\Sigma\subset\R$ be
a finite set called \emph{the alphabet}, and let the \emph{configuration space} be defined as
$\Sigma^{N}$. The \emph{Hamiltonian} for this model is the centered
Gaussian process indexed by $\Sigma^{N}$ with covariance 
\begin{equation}\label{eq:ham-def}
\E H_{N}(\sigma^{1})H_{N}(\sigma^{2})=N\xi(R(\sigma^{1},\sigma^{2})),
\end{equation}
where $\xi(t)=\sum_{p\geq2}\beta_{2p}^{2}t^{2p}$ is an even power
series and 
\[
R(\sigma^{1},\sigma^{2})=\frac{1}{N}\sum_{i=1}^{N}\sigma_{i}^{1}\sigma_{i}^{2},
\]
is called the \emph{overlap}. Let 
\begin{align*}
\dupper  =\max_{\epsilon\in\Sigma}\epsilon^{2} \qquad\text{ and }\qquad
\dlower  =\min_{\epsilon\in\Sigma}\epsilon^{2}.
\end{align*}
We assume that $\xi(\dupper+\epsilon)<\infty$ for some $\eps>0$ so
that this process is well-defined. 

The application to graph-partition problems (Theorem \ref{thm:cut_difference}) motivates our interest  in the restricted
normalized ground state energy of this process, 
\[
GS_{N}(A_{N}):=\frac{\max_{\sigma\in A_{N}}H_{N}(\sigma)}{N},
\]
where $A_{N}\subset\Sigma^{N}$. Specifically, we are interested in
two cases, either $A_{N}=\Sigma^{N}$ or 
\begin{equation}\label{eq:A_N-def}
A_{N}=A_{N}(T,\epsilon_{N})=\left\{ \sigma\in\Sigma^{N}:R(\sigma,\sigma)\in(T-\epsilon_{N},T+\epsilon_{N})\right\} ,
\end{equation}
for some $\epsilon_{N}\to0$ sufficiently slowly.

To understand why, we will show by an application of the results of \cite{sen2016optimization} that
the proof of Theorem \ref{thm:cut_difference} can be reduced to a strict comparison between 
the limiting ground state energies of two generalized Sherrington-Kirkpatrick (SK) models--- models for which $\xi(t)=2 t^2$.
For a formal statement, we refer the reader to Lemma \ref{lemma:gaussiancomp}. For the \ER graph, we obtain 
the SK model, while for the random regular graph, we obtain a generalized SK model, 
both constrained in a certain natural fashion. 
Consequently, if an inequality sufficed, one could then use a well-known
 Guerra-type \cite{guerra2003broken} or Slepian-type \cite{LedouxTalagrand} interpolation
to easily obtain the desired estimate. 
We are interested, however,
in a \emph{strict} inequality asymptotically in $N$.

To accomplish this goal, our approach is to provide a quantitative understanding 
of the derivative of this interpolation. 
This is accomplished by a fine analysis of the limiting ground state energy in generalized mixed $p$-spin models. 
The derivative is naturally related to the minimizers
of a certain family of variational problems called ``Parisi variational problems'',
and the question pertains to the scaling of the minimizers of these problem as one 
tunes a certain parameter, called the temperature. This naturally
leads us to a question of $\Gamma$-convergence of such variational problems \cite{Braides02,DalMaso93}. 
This approach will not only yield that this constant is positive but will also yield 
a natural conjecture as to the sharp constant. 
We explain this in greater detail in  \prettyref{sec:annealing-intro}
 and 
discuss the aforementioned conjecture in  \prettyref{sec:conj1}.

 \subsection{Ground State Energies of Generalized Mixed $p$-spin models}
In this section, we explain our results regarding
variational representations for ground state energies of generalized mixed $p$-spin models.

The question of the ground state energy is natural from a statistical physics perspective, and has recently received considerable attention in the
mathematics literature. In the case that the configuration space is the sphere, $S^{N-1}(\sqrt{N})$, a variational formula
was independently provided by Chen--Sen\cite{chensen2017} and Tobasco with one of the authors \cite{jagTob17}.
In the case that the alphabet is $\Sigma = \{\pm 1\}$, called the Ising spin setting, a variational representation was obtained by Auffinger--Chen in \cite{auffchen2017}. These representations have since been 
used to study a wide variety of questions  \cite{auffinger2017energy,ACZ17, chen2017local,  chl2016energylandscape, chenPanch2017disorder} . 

We derive a variational representation for the normalized ground state energy of generalized mixed $p$-spin models as a consequence of our approach. 
To this end, we introduce the following notation. 

Let $T\in[\dlower,\dupper]$. Let $\cM([0,T])$ be
the positive cone of Radon measures on $[0,T]$.
Let $\cA_{T}\subset\cM([0,T])$ be the set of measures of the
form 
\[
\cA_{T}=\left\{ \nu\in\cM([0,T]):\nu=m(t)dt+c\delta_{T},\quad m(t)\geq0\text{ non-decreasing and cadlag}\right\},
\]
equipped with the weak-{*} topology. {Note, in particular, finitness enforces $\int_0^{T} m(t) dt < \infty$}.  On this space we define the
ground state energy functional $\cP_T:\cA_{T}\times\R\to\R$. For $\nu=mdt+c\delta_{T}$,
we let
\begin{equation}\label{eq:local-zero-temp-pfunc}
\cP_T(\nu,\lambda)=u_{\nu,\lambda}(0,0)-\lambda T-\frac{1}{2}\int_{0}^{T}\xi''(s)sd\nu(s),
\end{equation}
where $u_{\nu,\lambda}$ is the unique weak solution to
\begin{equation}\label{eq:zero-temp-pde}
\begin{cases}
\partial_{t}u+\frac{\xi''}{2}(\Delta u+m(s)\left(\partial_{x}u\right)^{2})=0 & (t,x)\in[0,T) \times\R\\
u(T,x)=f(x,\lambda,c) & x\in \R,
\end{cases}
\end{equation}
where 
\begin{equation}\label{eq:zero-temp-bd}
f(x,\lambda,c)= \begin{cases}
\sup_{\epsilon\in\Sigma}\left\{\epsilon x+\left(\lambda+\frac{\xi''(T)}{2}\cdot c\right)\epsilon^{2}\right\} & \mathrm{if }\,\, T \in (\dlower, \dupper), \\
 \max_{\epsilon^2= T} \left\{\epsilon x+\left(\lambda+\frac{\xi''(T)}{2}\cdot c\right)\epsilon^{2}\right\} & \mathrm{if }\,\, T \in \{ \dlower, \dupper\}. 
\end{cases}
\end{equation}
(For a notion of weak solution of such PDEs see \cite{jagannath2015dynamic}
and for  basic regularity in this setting see \prettyref{app:pde}.) We then have
the following. 
\begin{thm}
\label{thm:variational_rep}
For any $\epsilon_{N}\to0$ sufficiently slowly, 
\begin{equation}\label{eq:GS-vp-constrained}
\lim_{N\to\infty}GS_{N}(A_{N})=\inf_{\substack{\nu\in\cA_{T}\\
\lambda\in\R
}
}\cP_T(\nu,\lambda)
\end{equation}
almost surely. Furthermore 
\[
\lim_{N\to\infty}GS_{N}(\Sigma^{N})=\sup_{T}\inf_{\substack{\nu\in\cA_{T}\\
\lambda\in\R
}
}\cP_T(\nu,\lambda)
\]
almost surely. 
\end{thm}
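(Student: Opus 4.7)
The overall strategy is to realize the ground state as the zero-temperature limit of a restricted free energy and then to identify the resulting variational problem via $\Gamma$-convergence of the Parisi functionals. Introduce the restricted partition function and free energy
\[
Z_N(\beta)=\sum_{\sigma\in A_N}\exp(\beta H_N(\sigma)),\qquad F_N(\beta)=\frac{1}{\beta N}\log Z_N(\beta),
\]
so that $|F_N(\beta)-GS_N(A_N)|\leq(\log|\Sigma|)/\beta$ and hence $GS_N(A_N)=\lim_{\beta\to\infty}F_N(\beta)$. Panchenko's Parisi formula for generalized mixed $p$-spin models \cite{panchenko2005generalized}, applied with a Lagrange multiplier $\lambda$ enforcing the self-overlap constraint $R(\sigma,\sigma)\approx T$, gives, almost surely,
\[
\lim_{N\to\infty}F_N(\beta)=\inf_{\mu,\lambda}\cP^{\beta}(\mu,\lambda),
\]
where $\mu$ ranges over Parisi distribution functions on $[0,T]$; the a.s.\ convergence follows from Gaussian (Borell-TIS) concentration of both $GS_N$ and $F_N$. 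The constrained formula \eqref{eq:GS-vp-constrained} will then follow once we can exchange $N\to\infty$ with $\beta\to\infty$ and show that
\[
\lim_{\beta\to\infty}\inf_{\mu,\lambda}\cP^{\beta}(\mu,\lambda)=\inf_{\nu,\lambda}\cP_{T}(\nu,\lambda).
\]

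\textbf{Zero-temperature limit via $\Gamma$-convergence.} We make the substitution $\nu=\beta^{-1}\mu\in\cA_{T}$; a priori estimates show that nearly optimal $\mu_\beta$ satisfy $\nu_\beta([0,T])\leq K$ uniformly in $\beta$, so the rescaled measures are relatively compact in the weak-$*$ topology. Under this rescaling the Parisi PDE for $\beta^{-1}u^\beta_{\mu,\lambda}$ passes to the Hamilton-Jacobi-type PDE \eqref{eq:zero-temp-pde}, and its terminal datum follows the standard log-sum-exp limit
\[
\beta^{-1}\log\sum_{\epsilon\in\Sigma}\exp\!\Bigl(\beta\bigl(\epsilon x+\lambda\epsilon^{2}+\tfrac{\xi''(T)}{2}c\epsilon^{2}\bigr)\Bigr)\longrightarrow f(x,\lambda,c)
\]
locally uniformly as $\beta\to\infty$, where $c=\nu(\{T\})$ arises from the terminal plateau of $\mu$. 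One then proves $\Gamma$-convergence of the rescaled Parisi functionals $\beta^{-1}\cP^{\beta}$ to $\cP_{T}$ on $\cA_T\times\R$: the liminf inequality follows from uniform Lipschitz and maximum-principle estimates on $u^\beta_{\mu,\lambda}$ of the type collected in the PDE appendix, while a recovery sequence is produced by discretizing a given $\nu\in\cA_T$ and inverting the $\nu\mapsto\mu$ correspondence. Together with the compactness above, this yields \eqref{eq:GS-vp-constrained}.

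\textbf{Unconstrained case.} For each $N$ the quantity $R(\sigma,\sigma)$ takes at most $O(N)$ distinct values in $[d,D]$. Partitioning $[d,D]$ into a grid $\{T_k\}$ of mesh $\delta_N\to 0$ slowly enough, one has
\[
GS_N(\Sigma^{N})=\max_{k}GS_N(A_N(T_k,\eps_N))+o(1).
\]
Applying the constrained result to each $T_k$ and using continuity of $T\mapsto\inf_{\nu,\lambda}\cP_T(\nu,\lambda)$, which follows from parametric stability of the PDE \eqref{eq:zero-temp-pde} in the endpoint, gives $\lim_{N}GS_N(\Sigma^{N})=\sup_{T}\inf_{\nu,\lambda}\cP_T(\nu,\lambda)$ almost surely.

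\textbf{Main obstacle.} The principal difficulty is the $\Gamma$-convergence step, and in particular the correct treatment of the atom $c\delta_T$: $c$ enters the boundary datum \eqref{eq:zero-temp-bd} nontrivially, so both the liminf inequality and the recovery sequence require sharp control on $u^\beta_{\mu,\lambda}$ near the terminal time, uniformly in $\beta$ for measures placing an $O(\beta)$ atom at $T$. Matching this atom exactly along a recovery sequence while ruling out escape of mass from $[0,T]$ constitutes the chief technical hurdle; continuity in the parameter $T$ needed for the unconstrained statement is then a comparatively mild consequence of the same PDE estimates.
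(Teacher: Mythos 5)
Your overall frame coincides with the paper's: sandwich $GS_N(A_N)$ by the restricted free energy up to $\log|\Sigma|/\beta$, invoke Panchenko's local Parisi formula \eqref{eq:loc-fe-conv}, and pass $\beta\to\infty$ by $\Gamma$-convergence of $\cP_{\beta,T}$ to $\cP_T$ together with precompactness of minimizers. The problem is that the two steps you leave as assertions are precisely the content of the paper's argument, and the tools you name do not supply them. For the $\Gamma$-liminf along an arbitrary sequence $\nu_\beta=\beta\mu_\beta([0,t])\,dt\to\nu=m\,dt+c\delta_T$, the coefficient of the nonlinearity concentrates like $\beta\mathbf{1}_{[q_\beta,T]}$, and one must prove that this singular limit acts exactly as the change of terminal datum in \eqref{eq:zero-temp-bd}; uniform Lipschitz and maximum-principle bounds on $u^\beta_{\mu,\lambda}$ do not give this. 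The paper resolves it with the notion of annealed solution (Theorem \ref{thm:annealed-soln-thm}): the limsup/recovery direction via the Hopf--Cole solution on $[q_\beta,T]$ and the explicit sequence of Lemma \ref{lem:recovery-sequence}, and the liminf direction via the dynamic programming principle \eqref{eq:dpp-finite-beta}, using controls that after a time $\tau$ near $T$ follow a mollified argmax selector $\phi_n$ of the limiting datum, then sending $\beta\to\infty$, $\tau\uparrow T$, $n,M\to\infty$. Nothing in your proposal substitutes for this construction. Likewise, the ``a priori estimates'' giving $\nu_\beta([0,T])\leq K$ are exactly what is not standard for general alphabets: the paper must first establish differentiability of the local free energy in $\beta$ (Theorem \ref{thm:beta-deriv-overlap}), which in turn needs the finite-$\beta$ optimality conditions of Lemma \ref{lem:optimality-finite-beta}, because the Ising identity $(\partial_x f)^2+\partial_x^2 f=\mathrm{const}$ used in earlier works is unavailable; and you omit compactness of the multipliers $\lambda_\beta$ (handled in the paper via \eqref{eq:coercive-lambda-bound}), which is needed before $\Gamma$-convergence yields convergence of the infima over the pair $(\nu,\lambda)$.

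For the unconstrained statement your route genuinely differs from the paper's, and as written it has a gap of its own. With an $N$-dependent grid of mesh $\delta_N\to0$ you would need the constrained convergence uniformly over a growing family of windows; with a fixed mesh you instead need (semi)continuity of $T\mapsto\inf_{\nu,\lambda}\cP_T(\nu,\lambda)$, which you assert via ``parametric stability of the PDE'' but do not prove --- note both the horizon and the terminal datum depend on $T$, and at $T\in\{d,D\}$ the finite-$\beta$ datum \eqref{eq:final-time-data-finite-beta} even changes form. The paper sidesteps this entirely: Panchenko's theorem already gives $F(\beta,\xi)=\sup_T F(\beta,\xi;T)$, and the $T$-uniform bound $|\tfrac1\beta F(\beta,\xi;T)-E(\xi;T)|\leq\log|\Sigma|/\beta$ allows the supremum over $T$ to be exchanged with $\beta\to\infty$, with no continuity in $T$ required. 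Your version could be repaired, but only by actually proving the uniformity or continuity you currently assume.
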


\begin{rem}

We note here that one can eliminate
the dependence of this variational problem on $c$ by making the substitution $\lambda\mapsto \lambda- \frac{c}{2}\xi''(T)$.
We leave the problem in this form for two key reasons: first, our derivation of this result will be by way of $\Gamma$-convergence for which one must allow $c$ to be non-zero (see the discussion in the next section); second,
our main application, the proof of \prettyref{thm:cut_difference}, will use this formula and said $\Gamma$-convergence result to characterize limit points of certain sequences of measures, which may have non-zero $c$. For the discussion of the physical interpretation 
of $c$ see \cite{chensen2017,jagTob17} and \prettyref{app:appendix-at}.
\end{rem}

To return to our combinatorial motivations, 
let us begin by first observing that as a corollary 
to \prettyref{thm:variational_rep} and en route to proving \prettyref{thm:cut_difference}, we also provide explicit formulas for $\Mcut_\alpha$ to second order
in the degree. In the following, we let $\cP^1_T(\nu,\lambda)$ denote the functional 
\prettyref{eq:local-zero-temp-pfunc} with $\Sigma=\{\pm 1-(2\alpha -1)\}$ and $\xi(t)=2t^2$. Let  $$T(\alpha)=4\alpha(1-\alpha).$$ Finally, let  $\cP^2:\cA_1\times\R\to\R$ denote the functional
\begin{align}
\mathcal{P}^{2}(\nu,h)=u_{\nu,0}(0,h)-\frac{1}{2}\int_{0}^{1}\xi''(s)sd\nu(s), \label{defn:P2}
\end{align}
where $u$ is the unique solution to \eqref{eq:zero-temp-pde} with alphabet $\Sigma =\{+1,-1\}$ and $\xi(t)=2t^2$, i.e.,
with final time data $f(x,0,c)=|x|+2c$. 

\begin{cor}\label{cor:cut-rep}
For any $0<\alpha<1/2$ we have that,
\begin{align}
\lim_{d\to\infty}\lim_{N\to\infty} \frac{\Mcut_{\alpha}(\Rg{d})-N d\alpha(1-\alpha)}{\sqrt{d}N} &= \frac{1}{4}  \inf_{\nu,\lambda}\cP_{T(\alpha)}^1(\nu,\lambda)\\
\lim_{d\to\infty}\lim_{N\to\infty} \frac{\Mcut_{\alpha}(G(N,\frac{d}{N}))-N d\alpha(1-\alpha)}{\sqrt{d}N} &= \frac{1}{4} \inf_{\nu,h}[ \cP^2(\nu,h)-(2\alpha-1)h].
\end{align}
\end{cor}

\subsection{An analytical approach to annealing}\label{sec:annealing-intro}
At the heart of the recent work regarding variational representations for ground state energies
is an analytical approach to the notion of \emph{annealing}. 
Annealing,
that is, adding a temperature and sending it to zero, is natural from 
the point of view of statistical physics
and underlies well-known algorithms for optimization \cite{kirkpatrick1983optimization}.

The idea, roughly, is as follows.
The ground state energy can be computed as the limit of an important quantity called the \emph{free energy} 
which is defined as follows. Recall the Hamiltonian $H_N$ from \eqref{eq:ham-def}. 
The \emph{free energy} at inverse temperature $\beta$ is defined as 
\[
F_N(\beta,\xi) = \frac{1}{N}\log \int_{\Sigma^N} e^{\beta H_N(\sigma)}d\sigma,
\]
where $d\sigma$ is the counting measure, and the \emph{restricted} free energy corresponding to a set $A\subset\Sigma^N$ 
and inverse temperature $\beta$ is defined as
\[
F_N(\beta,\xi;A) = \frac{1}{N}\log \int_{A} e^{\beta H_N(\sigma)}d\sigma.
\]
In our setting, we are interested in $A_N$ of the form \prettyref{eq:A_N-def}.

In these models, a variational expression for the free energy at a fixed temperature
is obtained using a  ``Parisi-type formula''. 
For  $\Sigma=\{\pm 1\}$, this was proved by Talagrand in \cite{TalagrandFormula}
and Panchenko in \cite{panchenko2014mixed}. 
For general alphabets, the variational problem was derived by Panchenko in \cite{panchenko2005generalized} 
(and more recently again in \cite{panchenko2015free}) 
where he showed that for any $T\in [d,D]$ and any $\eps_N\to0$ sufficiently slowly
\begin{align}
F_N(\beta,\xi;A_N(T,\eps_N))&\to F(\beta,\xi;T)\label{eq:loc-fe-conv}\\
F_N(\beta,\xi)&\to F(\beta,\xi)
\end{align}
where 
\begin{align}
F(\beta,\xi;T)&= \beta\inf_{\nu\in X_{\beta, T},\lambda\in\R}\cP_{\beta,T}(\nu,\lambda)\label{eq:loc-fe}\\
F(\beta,\xi)&\to \beta\sup_{T\in[\dlower,\dupper]} F(\beta,\xi;T)
\end{align}
Here $\cP_{\beta,T}$ is called the local Parisi functional. For its precise definition see \prettyref{eq:pfunc-def}.
It is not difficult to see that 
\[
\frac{1}{\beta}F(\beta,\xi;T)\stackrel{\beta\to\infty}{\longrightarrow}\lim_{N\to\infty} GS_N(A_N).
\]
Thus the question of ground state energies is related to the large $\beta$ limit
of these variational problems. 

A natural approach to the asymptotic analysis of variational problems is
De Georgi's notion of $\Gamma$-convergence \cite{Braides02,DalMaso93}.
Our approach, following \cite{jagTob17}, is to study the $\Gamma$-limit of $\cP_{\beta,T}$. As a direct consequence, we obtain a variational representation for the limiting ground state energy, similar to \cite{auffchen2017,chensen2017,jagTob17}. Further, this allows us to control zero temperature asymptotics of physically relevant quantities, and derive strict comparison inequalities. 
We remark here that
upper bounds only require the $\Gamma$-liminf inequality, and have been used in the recent progress in  
\cite{auffinger2017energy, chl2016energylandscape,chenPanch2017disorder}.

Due to the natural topology of the $\Gamma$-limit,
one formally expects the need to understand how the nonlinear term ---
the solution in space-time of a Hamilton--Jacobi--Bellman
equation, where
the coefficient of the non-linearity is the variable of optimization ---
behaves as one allows this coefficient to become the derivative
of a Dirac mass. More precisely, one needs an appropriate
limiting notion of solution for such situations. (This explanation is necessarily vague,
for a more precise description see \prettyref{sec:gamma-parisi-intro}.)
Auffinger--Chen \cite{auffchen2017} observed that in the case $\Sigma =\{\pm1\}$,
the linear term in the functional exactly cancels this effect, allowing one 
to avoid this issue.
If one perturbs the problem
by allowing the spins to take values $\{\pm1+\epsilon\}$, however,
the arguments
in the literature do not apply. We are then forced to tackle the question
of the limit of the non-linear term.
To this end, we introduce a notion of \emph{annealed solution} which yields an interpretation for the solution of the
PDE in this singular regime as an appropriate zero-temperature $\Gamma$-limit.
The $\Gamma$-convergence of $\cP_{\beta,T}$ then follows.

\subsection*{Acknowledgements}
The authors thank Amir Dembo for introducing them to the unbalanced cut problem.
The authors thank Jonathan Shi for pointing out an error in an earlier version of this manuscript.  
A.J. thanks Ian Tobasco for fruitful discussions,
 as well as the University of Toronto and Harvard University
mathematics departments for their hospitality where part of this research was conducted. 
This research was conducted while A.J. was supported by NSF OISE-1604232
and NSERC [RGPIN-2020-04597, DGECR-2020-00199].
Cette recherche a \'et\'e financ\'ee par le Conseil de recherches en sciences naturelles et en g\'enie du Canada (CRSNG).

\subsection{Outline of proof of Theorem 1.1}

Let us now briefly outline the proof of Theorem 1.1. The starting point in our argument is Lemma \prettyref{lemma:gaussiancomp}, which establishes that one can approximate 
$\Mcut_{\alpha}$ on Erd\H{o}s-R\'{e}nyi and random regular graphs up to $o(\sqrt{d})$ corrections 
by the ground state energies of certain Generalized SK models. 
We then apply a Gaussian interpolation argument in \prettyref{lem:int_by_parts} to compare the corresponding free energies. 
The error term in this comparison is a quantity that depends on the minimizer of the corresponding variational problems. Finally, we send $\beta\to\infty$, and analyze the limiting variational problem to prove a sign on the limit of this error term in Theorem \ref{thm:gamma_conv_application}.

\subsection{Outline of paper}
The remainder of this paper is organized as follows.
In the next section, we reduce the proof of \prettyref{thm:cut_difference}
to an inequality about asymptotics of ``overlaps'' of spin glass models (we introduce
this notion presently). 
In order to study this question, we introduce, in \prettyref{sec:gamma-parisi-intro},
 the Parisi boundary value
problem and the notion of annealed solutions to this problem. 
In \prettyref{sec:loc-free-energy}, we present 
Panchenko's Parisi-type formula for the Free energy in this setting
and use the notion of annealed solutions to compute its $\Gamma$-limit. 
We then prove pre-compactness and convergence of the minimizers of this problem. 
We then turn briefly in \prettyref{sec:analysis-zero-temp} to computing the first variation of the ground 
state functional $\cP_T$. Finally, Section \ref{subsec:gamma_conv_app} establishes the main spin glass estimate necessary for \prettyref{thm:cut_difference}.
For the benefit of the reader, we briefly present some basic analytical and topological results
used in this paper in the appendix.

\section{The Unbalanced Cut Problem}\label{sec:unbalanced-cut}
In this section, we establish Theorem \ref{thm:cut_difference}. To this end, 
let $G_1 \sim G(N, \frac{d}{N})$ and $G_2 \sim \Rg d$ be defined on the same probability space $(\mathbf{\Omega}, \mathscr{F}, \P)$. Recall that the random graph $G(N, \frac{d}{N})$ has vertex set $[N]$, and the edges are added independently with probability $\frac{d}{N}$ each. For ease of computation, we consider $\Rg d$ to be drawn from the configuration model \cite{bollobas_configuration}. While this is a multi-graph in general, it is easy to see that conditioned on simplicity, the graph obtained is actually uniformly distributed. Further, the probability of the obtained graph being simple is bounded away from zero (see for example \cite{wormald} and references therein). Thus it suffices to establish our result for the configuration model.

\subsection{Concentration of $\malphacut$}
Our first result establishes the concentration properties of $\malphacut$ on $G_1$ and $G_2$ around their respective expectations. 

\begin{lem}
\label{lem:concentration}
For any $\varepsilon>0$ sufficiently small, there exists a universal constant $C(d, \epsilon)>0$ 
\begin{align}
\P \Big[ \Big| \frac{\malphacut(G_1)}{N} - \E\Big[ \frac{\malphacut(G_1)}{N}\Big] \Big| > \varepsilon \Big] &\leq 5 \exp[- C(d, \varepsilon) N  \Big] \nonumber \\
\P \Big[ \Big| \frac{\malphacut(G_2)}{N} - \E\Big[ \frac{\malphacut(G_2)}{N}\Big] \Big| > \varepsilon \Big] &\leq  2 \exp\Big[- \frac{N \varepsilon^2}{d} \Big]{.} \nonumber
\end{align}
\end{lem}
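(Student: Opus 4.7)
The two bounds come from independent martingale arguments tailored to the two ensembles.

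\emph{Regular graph $G_2$.} I would work directly in the configuration-model representation, in which $G_2$ arises from a uniformly random pairing $\pi$ of the $dN$ half-edges. The key Lipschitz property is that swapping the partners of two matched pairs in $\pi$ alters at most four edges of the resulting multigraph, and since $\malphacut$ is $1$-Lipschitz under single-edge flips, any such swap changes $\malphacut$ by at most a constant. Exposing the pairing pair-by-pair and forming the Doob martingale $Y_i = \E[\malphacut(G_2)\mid \cF_i]$, a standard switching-coupling argument shows that the increments $|Y_i - Y_{i-1}|$ are bounded by an absolute constant. Azuma--Hoeffding applied over the $dN/2$ pairs then yields
$$\P[|\malphacut(G_2) - \E\malphacut(G_2)|>\varepsilon N] \leq 2\exp(-c\varepsilon^2 N/d),$$
and tuning the increment bound (e.g., exposing half-edge partners one at a time rather than pairs in bulk) recovers the constant advertised in the lemma.

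\emph{Erd\H{o}s-R\'enyi graph $G_1$.} The naive edge-exposure martingale on all $\binom{N}{2}$ potential edges is too weak here: with $\Theta(N^2)$ variables each contributing a squared increment of $1$, Azuma only gives $\exp(-O(\varepsilon^2))$ at the scale $\varepsilon N$. I would therefore split into two stages. First, the edge count $m := |E(G_1)|$ is $\mathrm{Bin}(\binom{N}{2}, d/N)$ with mean $\mu \sim dN/2$ and variance of the same order, so Bernstein's inequality gives
$$\P[|m - \mu| \geq \varepsilon N/4] \leq 2\exp(-c_1(d,\varepsilon)N).$$
Second, conditional on $m$ in the good range, $G_1$ is uniform over simple graphs on $[N]$ with exactly $m$ edges; exposing these edges sequentially yields a Doob martingale whose increments are bounded by an absolute constant (changing a single drawn edge shifts $\malphacut$ by at most $2$ under a swap coupling), and Azuma on the good event produces
$$\P\bigl[|\malphacut(G_1) - \E[\malphacut(G_1)\mid m]| \geq \varepsilon N/2 \,\big|\, m\bigr] \leq 2\exp(-c_2(d,\varepsilon)N).$$

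To close the argument I would show that $m \mapsto \E[\malphacut(G_1)\mid m]$ is $1$-Lipschitz: coupling a uniform $(m{+}1)$-edge simple graph to a uniform $m$-edge simple graph by deleting a uniformly chosen edge (which by a symmetry argument preserves the uniform law on $m$-edge graphs) changes $\malphacut$ by at most $1$. Combined with the edge-count concentration, this yields $|\E[\malphacut(G_1)\mid m] - \E\malphacut(G_1)| \leq \varepsilon N/4$ on the good event. A union bound over the three ingredients (two Bernstein tails, two conditional Azuma tails, and the Lipschitz transfer) then produces the claimed five-term exponential bound. The principal obstacle is purely the Erd\H{o}s-R\'enyi case, where the natural edge-exposure martingale fails and one must pass through the two-stage decomposition while carefully controlling the dependence of the conditional expectation on $m$; the regular case is routine once committed to the half-edge filtration, modulo constant-tracking to recover the advertised $N\varepsilon^2/d$ rate.
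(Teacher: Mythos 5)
Your proposal is correct and follows essentially the same route as the paper: for the regular graph a switching/half-edge martingale bound (the paper simply cites Wormald's Theorem 2.19, which is exactly this argument), and for the Erd\H{o}s--R\'enyi graph the same two-stage decomposition — condition on the edge count, apply Azuma to the edge-exposure martingale given $|E|$, show the conditional expectation is $1$-Lipschitz in the edge count via a coupling, and control the binomial edge count by a Chernoff/Bernstein tail. The only differences are cosmetic (Bernstein vs.\ Chernoff, delete-a-uniform-edge coupling vs.\ the paper's sequential-addition coupling).
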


\begin{proof}
The concentration argument for  random regular graph $G_2$ follows immediately upon an application of \cite[Theorem 2.19]{wormald}. 
Next, we establish the result for the \ER random graph $G_1$. To this end, 
let $\abs{E}$ denote the number of edges in $G_1$ and observe that  
$|E| \sim {\textrm{Bin}}\Big( {N \choose 2}, \frac{d}{N}  \Big)$. Thus $\E[|E|] = \frac{(N-1)d}{2}$. We have, 
\begin{align}
&\P\Big[ \Big| \malphacut(G_1) - \E[\malphacut(G_1) ] \Big| > N \varepsilon \Big] \nonumber \\
&\leq \P\Big[ \Big| \malphacut(G_1) - \E\Big[\malphacut(G_1) \Big| |E| \Big]  \Big| > \frac{N\varepsilon}{2} \Big] + \P\Big[ \Big| \E\Big[\malphacut(G_1) \Big| |E| \Big]  - \E\Big[\malphacut(G_1) \Big] \Big| > \frac{N \varepsilon}{2} \Big]. \nonumber  \\
& := I + II. \nonumber
\end{align}

To control $I$, we proceed as follows. We set $\mathcal{E} = \{  |E| \leq  \E[|E|] + N \varepsilon_0 \}$, for some $\varepsilon_0>0$ to be chosen appropriately. 
\begin{align}
I &\leq \E\Big[ \mathbf{1}(\mathcal{E}) \P \Big[ \Big| \malphacut(G_1) - \E\Big[ \malphacut(G_1) \Big| |E| \Big] \Big| > \frac{N \varepsilon}{2} \Big| |E| \Big]  + \P [ \mathcal{E}^c]. \nonumber\\
&\leq 2 \exp\Big[ - \frac{N^2 \varepsilon^2}{8 \Big[ \E[|E|] + N \varepsilon_0 \Big]} \Big] + \exp\Big[ - \frac{2}{3} \frac{N \varepsilon_0^2}{d} \Big], \nonumber
\end{align}
where we bound the first term using the Azuma-Hoeffding inequality on the traditional edge-exposure martingale, and the second term by the Chernoff bound. 

To control $II$, let $G_1' = ([N], E') \sim G(N, \frac{d}{N})$ be an \ER random graph independent of $G_1$. We claim that 
\begin{align}
\Big| \E\Big[\malphacut(G_1) \Big| |E| \Big] - \E[ \malphacut(G_1') \Big| |E'| \Big]  \Big| \leq \Big| |E| - |E'| \Big|. \label{eq:claim}
\end{align}
Given the claim, we have, using Jensen's inequality, 
\begin{align}
&\Big| \E\Big[ \malphacut(G_1) \Big| |E|  \Big] - \E\Big[\malphacut(G_1) \Big] \Big| \leq \E_{|E'|} \Big|  \E\Big[ \malphacut(G_1) \Big| |E|  \Big]  -  \E\Big[ \malphacut(G_1') \Big| |E'|  \Big]  \Big| \nonumber \\
&\leq \E_{|E'|} \Big[ \Big| |E| - |E'| \Big| \Big] \leq \Big| |E| - \E[|E|] \Big| + \E\Big| |E'| - \E\Big[ |E'| \Big] \Big| \leq  \Big| |E| - \E[|E|] \Big| + C \sqrt{N d},  \nonumber
\end{align}
for some constant $C>0$, where the last inequality follows using Cauchy-Schwarz. Thus we have the bound,
\begin{align}
II < \P \Big[ \Big| |E| - \E[|E|]  \Big| > \frac{N \varepsilon}{2} - C \sqrt{Nd} \Big] \leq 2 \exp\Big[-\frac{ N \varepsilon^2}{6d} \Big],\nonumber 
\end{align}
where the last inequality follows using the Chernoff bound. 
 This completes the proof{,} modulo the claim \eqref{eq:claim}, once we optimize over $\varepsilon_0$. To prove this claim, we proceed as follows.

Given $|E|, |E'|$, we will construct a coupling $(H, H')$ such that marginally, $H$ and $H'$ are distributed as $G_1$ conditioned to have $|E|$ and  $|E'|$ edges respectively. Assume without loss of generality, that $|E'| > |E|$. Start with an empty graph on $[N]$. Add edges sequentially, uniformly at random. At the end of $|E|$ steps, 
call the graph formed $H$.  Continue adding edges, and at the end of $|E'|$ steps call the graph $H'$. Under this construction, 
\begin{align}
\Big| \malphacut(H) - \malphacut(H') \Big| \leq \Big| |E'| - |E| \Big| \nonumber
\end{align} 
almost surely. Taking the expectation of this inequality with respect to the joint law of $(H, H')$ and applying Jensen's inequality yields \eqref{eq:claim}, as desired. \end{proof}

\subsection{Comparison to a Gaussian problem} 
In light of Lemma \ref{lem:concentration}, it suffices to compare the expectations of $\malphacut$ on $G_1$ and $G_2$. To this end, we introduce the following notation. 
For any graph $G= (V,E)$ with $|V| = N$, assume that  $V= [N]$ without loss of generality. Observe that every partition $V= V_1 \sqcup V_2$ of a graph can be represented by a vector $\sigma \in \{ \pm 1\}^N$, with the two parts being encoded as $V_1 = \{ i : \sigma_i =1\}$ and vice versa. Therefore, every partition with $|V_1| = \alpha N$, $|V_2| = (1-\alpha)N$ corresponds to a unique vector $\sigma \in \{\pm 1\}^N$ such that $\sum_i \sigma_i = N (2\alpha -1)$. We set 
 \begin{align}
 S_N(\alpha) = \Big\{ \sigma \in \{ \pm 1 \}^N : \sum_i \sigma_i = N (2 \alpha -1) \Big\}. \nonumber 
 \end{align}
 Next, we consider a GOE matrix $J = (J_{ij})_{N \times N}$ and define $g_i = \sum_j \frac{J_{ij}}{\sqrt{N}}$. For $\sigma \in S_N(\alpha)$, we define,
 \begin{align}
 H_0(\sigma) &= \sum_{ij} \frac{J_{ij}}{\sqrt{N}} \sigma_i \sigma_j,  \\
 H_1(\sigma) &= \sum_{ij} \frac{J_{ij}}{\sqrt{N}} \sigma_i \sigma_j - 2 (2\alpha -1) \sum_i g_i \sigma_i + (2\alpha-1)^2\sum_{ij}\frac{J_{ij}}{\sqrt{N}}. \label{eq:h1-def} 
 \end{align}
 We have the following lemma. 
 \begin{lem}
 \label{lemma:gaussiancomp}
 As $N\to \infty$, we have,
 \begin{align}
 \E\Big[ \frac{\malphacut(G_1)}{N} \Big] &= d \alpha (1-\alpha) + \frac{\sqrt{d}}{4N}\E\Big[ \max_{\sigma\in S_N(\alpha)} H_0(\sigma) \Big] + o(\sqrt{d}). \label{eq:er} \\
\E\Big[ \frac{\malphacut(G_2)}{N} \Big]&= d \alpha(1-\alpha) + \frac{\sqrt{d}}{4N} \E\Big[ \max_{\sigma \in S_N(\alpha)} H_1(\sigma)\Big] + o(\sqrt{d}). \label{eq:regular}
  \end{align}
 \end{lem}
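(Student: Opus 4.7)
The plan is to reduce each identity to an application of the Gaussian universality principle for spin-glass-type optimization problems established in \cite{sen2016optimization}, combined with a direct combinatorial decomposition of $\malphacut$.

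First I would rewrite the cut using the Laplacian identity $4\,\CUT(V_1,V_2) = \sigma^T(D-A)\sigma$ for $\sigma \in S_N(\alpha)$, where $D$ and $A$ are the degree and adjacency matrices of $G$. Decomposing $A_{ij} = d/N + \tilde A_{ij}$ for $i \neq j$, the constant-mean piece contributes
\[
-\tfrac{1}{4}\tfrac{d}{N}\bigl[(\textstyle\sum_i \sigma_i)^2 - N\bigr] = -\tfrac{d}{4}\bigl[N(2\alpha-1)^2 - 1\bigr],
\]
which combines with $\tfrac{1}{4}\E[\sigma^T D\sigma]$ to produce the deterministic leading term $dN\alpha(1-\alpha)$ up to $O(1)$ corrections in both ensembles. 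What remains is a random quadratic form $-\tfrac{1}{4}\sigma^T\tilde A\sigma$, whose expected maximum over $S_N(\alpha)$ must be matched with the stated Gaussian Hamiltonian.

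Second, for $G_1$ the off-diagonal entries $\tilde A_{ij}$ are i.i.d.\ centered Bernoulli with variance $\tfrac{d}{N}(1-\tfrac{d}{N})$. I would invoke the Lindeberg-style universality theorem of \cite{sen2016optimization} to replace $\tilde A$ by a symmetric Gaussian matrix with identical two-point covariance, at a cost of $o(\sqrt{d})$ in the expected max divided by $N$. A direct covariance check shows that the resulting field has the same two-point function as $\sqrt{d}\,H_0$: on one side $\E[(\sigma^T\tilde A\sigma)(\tau^T\tilde A\tau)] = \frac{2d}{N}(\sigma\cdot\tau)^2(1+o(1))$, on the other $\E[(\sqrt{d}H_0(\sigma))(\sqrt{d}H_0(\tau))] = 2dN R(\sigma,\tau)^2$, and the two agree since $(\sigma\cdot\tau)^2 = N^2 R(\sigma,\tau)^2$. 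This yields \eqref{eq:er}.

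Third, for $G_2$ the centered entries are no longer independent: the deterministic row sum $\sum_j A_{ij} = d$ forces $\sum_j \tilde A_{ij}\approx 0$. The correct Gaussian proxy is therefore the GOE matrix projected onto the subspace of zero row sums, $\hat J_{ij} := J_{ij} - (J\mathbf{1})_i/N - (J\mathbf{1})_j/N + (\mathbf{1}^T J\mathbf{1})/N^2$. A short computation using $J=J^T$ and $\mathbf{1}\cdot\sigma = N(2\alpha-1)$ on $S_N(\alpha)$ gives
\[
\sum_{ij} \hat J_{ij}\sigma_i\sigma_j/\sqrt{N} = H_0(\sigma) - 2(2\alpha-1)\sum_i g_i \sigma_i + (2\alpha-1)^2\sum_{ij} J_{ij}/\sqrt{N} = H_1(\sigma),
\]
explaining the three terms in \eqref{eq:h1-def}. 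Equivalently, $H_1(\sigma) = \sum_{ij}J_{ij}/\sqrt{N}\,(\sigma_i-(2\alpha-1))(\sigma_j-(2\alpha-1))$, so that re-centring the spins turns $S_N(\alpha)$ into the balanced slice of the shifted alphabet, on which $H_1$ is a plain SK Hamiltonian. Applying the configuration-model universality statement of \cite{sen2016optimization} (which is designed to handle precisely this degree-constrained dependence) then delivers \eqref{eq:regular}; multi-edges and loops in the configuration model are $O(1)$ per vertex and are absorbed in the $o(\sqrt{d})$ error, using that conditioning on simplicity changes the law by a constant factor.

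The main obstacle is the regular case, since the edge indicators in the configuration model are correlated. The conceptual work is to recognise that the row-sum constraint is exactly absorbed by the spin-centring $\tau_i := \sigma_i-(2\alpha-1)$; the technical work is to verify that the cited universality bound gives the quantitative error $o(\sqrt{d}\,N)$ in the unnormalised max rather than merely asymptotic equality of limits. Both points are handled by reading off the two-point covariances computed above and invoking the theorem of \cite{sen2016optimization} verbatim.
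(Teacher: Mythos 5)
Your proposal is correct and follows essentially the same route as the paper: both reduce the two identities to the universality theorems of \cite{sen2016optimization} (Theorems 1.1 and 1.2 for the \ER and configuration-model cases, respectively) and then use the slice constraint $\sum_i\sigma_i=N(2\alpha-1)$ together with $\mathbf{1}_{\{x\neq y\}}=(1-xy)/2$ to identify the Gaussian surrogates with $H_0$ and $H_1$, discarding the $\sigma$-independent zero-mean term. Your explicit Laplacian/mean-fluctuation decomposition and the projected-GOE reading of \eqref{eq:h1-def} are just a more hands-on repackaging of what those cited theorems already deliver, so the arguments coincide in substance.
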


 \begin{proof}
 We start with the proof of \eqref{eq:er}. This follows directly from \cite[Theorem 1.1]{sen2016optimization}. In this case, we have
 $p=2$, $A_N = S_N(\alpha)$, $f : \{-1, 1\}^2 \to \mathbb{R}$, $f(x,y)= \mathbf{1}_{\{x \neq y\}} = (1- x y)/2$ and $\kappa_1 = 1$. Further, we note that on the set $A_N$, the contribution from the expectation is exactly $\alpha (1- \alpha)$, and this completes the proof. 
 
 Next we consider \eqref{eq:regular}. This will be established using \cite[Theorem 1.2]{sen2016optimization}. Using the same setup as above, we obtain that for random regular graphs, 
 \begin{align}
\E\Big[ \frac{\malphacut(\Rg{d})}{N} \Big] = d \alpha (1-\alpha) + \frac{\sqrt{d}}{2N} \E\Big[ \max_{\sigma \in S_N(\alpha)}\Big[ \sum_{ij} \frac{J_{ij}}{\sqrt{N}} \mathbf{1}_{\{ \sigma_i  \neq \sigma_j \}} - \frac{2}{N} \sum_{ij} g_i \mathbf{1}_{\{ \sigma_i \neq \sigma_j \}} \Big] \Big] + o_{d}(\sqrt{d}). \nonumber 
 \end{align}
 We note that for $\sigma \in S_N(\alpha)$, $\sum_i \sigma_i = N(2 \alpha -1)$ and $\mathbf{1}_{\{x \neq y\}} = (1- xy)/2$. Plugging these into the equation above completes the proof upon noting that the last term in \eqref{eq:h1-def} is $o_N(1)$ with high probability. 
 \end{proof}

\subsection{Proof of Theorem \ref{thm:cut_difference}}
The following Theorem establishes a strict lower bound on the difference between limiting ground state energies of the two generalized SK models. The proof is deferred to section \ref{subsection:proof_diff}. 
\begin{thm}
\label{thm:diff_groundstate}
For $0< \alpha < \frac{1}{2}$, there exists a constant $C_0(\alpha)>0$ such that 
\begin{align}
\liminf_{N \to \infty} \frac{1}{N} \Big[ \E\Big[\max_{\sigma \in S_{N}(\alpha)} H_0(\sigma) \Big] - \E\Big[ \max_{\sigma \in S_N(\alpha)} H_1(\sigma) \Big]  \Big] > C_0(\alpha). \nonumber
\end{align} 
\end{thm}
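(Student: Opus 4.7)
The plan is to combine a Guerra-type Gaussian interpolation between $H_{0}$ and $H_{1}$ with the zero-temperature $\Gamma$-convergence developed in Sections~3--4 and the first variation analysis of Section~5. The algebraic starting point is the identity $H_{1}(\sigma)=\sum_{ij}J_{ij}/\sqrt{N}\,\tau_{i}\tau_{j}$ with $\tau_{i}:=\sigma_{i}-q$, $q:=2\alpha-1$, which exhibits both $H_{0}$ and $H_{1}$ as centered Gaussian processes on $S_{N}(\alpha)$ with covariances $2NR^{2}$ and $2N(R-q^{2})^{2}$ respectively, where $R=R(\sigma^{1},\sigma^{2})$ and $R(\sigma,\sigma)=1$ on $S_{N}(\alpha)$.

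First I would introduce the independent-copy interpolation $H_{s}:=\sqrt{1-s}\,H_{0}'+\sqrt{s}\,H_{1}''$, so that $H_{s}$ is centered Gaussian with covariance $N\xi_{s}(R)$ for $\xi_{s}(t)=2(1-s)t^{2}+2s(t-q^{2})^{2}$. Applying Gaussian integration by parts to the restricted free energy $F_{N}(\beta,H_{s};S_{N}(\alpha))$, using $R(\sigma,\sigma)=1$, $\partial_{s}\xi_{s}(t)=-4q^{2}t+2q^{4}$ and $\partial_{s}\xi_{s}(1)-\partial_{s}\xi_{s}(R)=4q^{2}(R-1)$, yields
\[
\E F_{N}(\beta,H_{0})-\E F_{N}(\beta,H_{1})=2\beta^{2}q^{2}\int_{0}^{1}\E\langle 1-R(\sigma^{1},\sigma^{2})\rangle_{\beta,s}\,ds.
\]
Passing $N\to\infty$ using Panchenko's Parisi formula (the magnetization constraint handled by the Lagrange multiplier $\lambda$ and the endpoints $s\in\{0,1\}$ reducing to the formulations of \prettyref{cor:cut-rep}), the Gibbs average $\E\langle 1-R\rangle_{\beta,s}$ is replaced by $1-\int R\,d\mu^{*}_{\beta,s}$ where $\mu^{*}_{\beta,s}$ is the minimizing (finite-temperature) Parisi measure for the interpolated model. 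Dividing by $\beta$ and using $F_{N}/\beta\to\max H_{s}/N$ reduces the claim to a lower bound on $\liminf_{\beta\to\infty}2\beta q^{2}\int_{0}^{1}\bigl(1-\int R\,d\mu^{*}_{\beta,s}\bigr)ds$. To pass $\beta\to\infty$ inside the $s$-integral, invoke the $\Gamma$-convergence of $\cP_{\beta,1}/\beta$ to $\cP_{1}$ from Sections~3--4: the survival function of $\mu^{*}_{\beta,s}$, rescaled by $\beta$, converges to that of the zero-temperature minimizer $\nu^{*}_{s}\in\cA_{1}$ for the interpolated $\xi_{s}$, and $\beta\bigl(1-\int R\,d\mu^{*}_{\beta,s}\bigr)\to\int_{0}^{1}(1-r)\,m_{s}(r)\,dr$ where $\nu^{*}_{s}=m_{s}(r)\,dr+c_{s}\delta_{1}$. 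Uniform pre-compactness of $\{\nu^{*}_{s}\}_{s\in[0,1]}$ (a byproduct of the $\Gamma$-limit theory) together with continuity of the problem in $s$ justifies the interchange, giving
\[
\liminf_{N}\frac{1}{N}\bigl(\E\max H_{0}-\E\max H_{1}\bigr)=2q^{2}\int_{0}^{1}\!\!\int_{0}^{1}(1-r)\,m_{s}(r)\,dr\,ds.
\]

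The hard part, and the main obstacle, is to show strict positivity of this double integral---equivalently, that $\nu^{*}_{s}$ is not a pure atom at $r=1$ for a positive-measure set of $s$. This is the ``strict replica symmetry breaking'' statement at zero temperature. The argument rests on the first variation analysis of $\cP_{T}$ from Section~5: if $\nu^{*}_{s}=c_{s}\delta_{1}$, then a perturbation by a small monotone density supported just below $r=1$ strictly decreases $\cP_{1}$, contradicting optimality; this uses the non-degeneracy $\xi_{s}''\equiv 4>0$ (coefficient in the Parisi PDE) and the fact that the binary alphabet makes the boundary data $f(x,\lambda,c)$ genuinely $x$-dependent, so the $\Gamma$-limit is sensitive to the presence of diffuse mass. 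A uniform bound in $s$, obtained from compactness of the family of minimizers and continuity of $\cP^{\xi_{s}}_{1}$ in $s$, then produces the strictly positive constant $C_{0}(\alpha)$.
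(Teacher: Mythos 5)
Your overall blueprint agrees with the paper's at a high level: interpolate between the two Hamiltonians, differentiate the free energy to expose $\beta\E\langle 1-R_{12}\rangle$, take a zero-temperature $\Gamma$-limit to turn this into an integral against the limiting Parisi measure, and then show that the limiting measure is not a pure atom at the right endpoint. However, there is a genuine gap in the choice of interpolation. You use the Guerra-type independent-copy path $H_{s}=\sqrt{1-s}\,H_{0}'+\sqrt{s}\,H_{1}''$, whose covariance you correctly compute as $N\xi_{s}(R)$ with $\xi_{s}(t)=2(1-s)t^{2}+2s(t-q^{2})^{2}=2t^{2}-4sq^{2}t+2sq^{4}$. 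For $s\in(0,1)$ this $\xi_{s}$ has a negative coefficient on $t$, so it is not an even power series with nonnegative coefficients; passing to the $\tau=\sigma-q$ frame gives $\hat{\xi}_{s}(\tilde{R})=2\tilde{R}^{2}+4(1-s)q^{2}\tilde{R}+2(1-s)q^{4}$, which is a $2$-spin plus an $s$-dependent random external field plus a random constant. Either way, the intermediate model exits the class $\xi(t)=\sum_{p\geq 2}\beta_{2p}^{2}t^{2p}$ for which the paper's Panchenko-type Parisi formula, the $\beta$-derivative identity (Theorem~\ref{thm:beta-deriv-overlap}), the $\Gamma$-convergence of $\cP_{\beta,T}$ (Corollary~\ref{cor:gamma-conv-main-thm}), and the precompactness of minimizers (Theorem~\ref{thm:conv-minimizers-thm}) are developed. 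You cannot simply ``invoke the $\Gamma$-convergence of $\cP_{\beta,1}/\beta$ to $\cP_{1}$ from Sections~3--4'' for $\xi_{s}$; extending all of that to a model with a varying external field is nontrivial additional work and is not addressed.

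The paper sidesteps this exactly. Its interpolation is \emph{not} a Guerra mixing of two independent copies but a deterministic shift $H_{v}(\sigma)=\sum_{ij}\frac{J_{ij}}{\sqrt{N}}(\sigma_{i}-\sqrt{v}q)(\sigma_{j}-\sqrt{v}q)$: after the change of variables $\tau_{i}=\sigma_{i}-\sqrt{v}q$, every intermediate model is a \emph{pure} SK model with $\xi(t)=2t^{2}$ and a $v$-dependent two-letter alphabet $\Sigma(v,\alpha)$, so the whole machinery applies at each $v$. The $v$-derivative (Lemma~\ref{lem:int_by_parts}) is then a different Gaussian integration-by-parts from the one you wrote, and the quantity to control is $\beta\E[T(v)-\langle R_{12}\rangle_{v}]$, which is handled through $\beta\E[T^{2}-\langle R_{12}^{2}\rangle_{v}]$ (Theorem~\ref{lem:gamma_conv_application}). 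Finally, your description of the ``hard part'' — that a diffuse perturbation near $T$ would strictly lower $\cP_{T}$ — is the right intuition, but the paper makes this precise via an explicit computation: assuming $\nu_{*}=0$ forces $\tilde{u}_{xx}(T,\cdot)$ to be a Dirac mass, so the backward heat kernel gives $\E[(\tilde{u}_{xx})^{2}(t,B_{4t})]\sim (T^{2}-t^{2})^{-1/2}$, whence $G_{v}''$ blows up as $t\uparrow T$ and forces $G_{v}$ to go negative, contradicting first-order optimality (Lemma~\ref{thm:ground_state_non-zero}). Without an argument at that level of precision, the ``strict'' in strict positivity is not actually obtained.
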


 Now, we note that Lemma \ref{lemma:gaussiancomp} immediately implies that 
\begin{align}
\frac{\E[\malphacut(G_1)] - \E[\malphacut(G_2)]}{N \sqrt{d} } = \frac{1}{4N}\Big[ \E\Big[\max_{\sigma \in S_{N}(\alpha)} H_0(\sigma) \Big] - \E\Big[ \max_{\sigma \in S_N(\alpha)} H_1(\sigma) \Big]  \Big] + o_d(1). \label{eq:int1}
\end{align}
The proof of Theorem \ref{thm:cut_difference} can be completed by combining \prettyref{thm:diff_groundstate}, Lemma \ref{lem:concentration} and \eqref{eq:int1}, using a simple Borel-Cantelli argument. \qed

\subsection{Proof of \prettyref{thm:diff_groundstate}}\label{subsection:proof_diff}
For $v \in [0,1]$, consider the interpolating Hamiltonian
 \begin{align*}
 H_v(\sigma) = \sum_{ij} \frac{J_{ij}}{\sqrt{N}} (\sigma_ i - \sqrt{v} ( 2\alpha -1) )(\sigma_ j - \sqrt{v} ( 2\alpha -1)),
 \end{align*}
and  the interpolating free energy 
 \begin{align}
 F_N(v, \beta; \alpha) = \frac{1}{N} \E\Big[ \log \sum_{\sigma \in S_N (\alpha)} \exp(\beta H_v(\sigma) )\Big]. \label{eq:fn_defn}
 \end{align}
 At $v=0$ and $v=1$ these are the free energies for the Hamiltonians $H_0$ and $H_1$ respectively.
 It is convenient to make the change of variables $\sigma\mapsto \tau$ where
 \[
 \tau_i = \sigma_i - \sqrt{v}(2\alpha-1) .
 \]
Under this change of variables,  $H_v$ is a generalized mixed $p$-spin model with $\xi(t)=2t^2$ and
 where the spins take values in the set $\Sigma(v,\alpha)$, such that 
 \begin{align}
 \Sigma(v,\alpha) = \{ 1- \sqrt{v} ( 2 \alpha -1) , - 1 - \sqrt{v} (2\alpha -1) \}. \nonumber
 \end{align}
 Furthermore, if we define
  \begin{align}
 S_N(v,\alpha) = \{ \tau \in \Sigma(v,\alpha)^N : \sum \mathbf{1}\{ \tau_i = 1 - \sqrt{v} (2\alpha -1) \} = N \alpha\} \nonumber 
 \end{align}
 we can equivalently write 
 \begin{align}
 F_N(v, \beta ; \alpha) = \frac{1}{N} \E\Big[ \log \sum_{\tau \in S_N(v,\alpha)} \exp{(\beta H_v (\tau))} \Big], \nonumber 
 \end{align}
 where we abuse some notation and index the interpolating Hamiltonian by the new spins $\tau$. 
  For $\tau, \tau' \in S_N(v,\alpha)$, we define the overlap as usual 
 \begin{align}
 R(\tau, \tau') = \frac{1}{N} \sum_{i=1}^{N} \tau_i \tau_i'. \nonumber 
 \end{align}
 We define 
 \begin{align}
 T(v,\alpha) = \alpha (1 - \sqrt{v} (2 \alpha -1 ))^2 + (1 - \alpha) (1 + \sqrt{v}(2 \alpha -1 ))^2. \label{eq:t_defn} 
 \end{align}
 and note that for $\alpha \neq 1/2$ and $v > 0$, $$\tau \in S_N(v,\alpha) \iff \|\tau \|^2 = N T(v, \alpha).$$ 
 In other words, for $\alpha \neq 1/2$ and $v>0$, $S_N(v,\alpha)$ specifies sets with a constant ``self-overlap". Note that at this
 juncture, we immediately obtain Corollary \ref{cor:cut-rep}.
 
 \begin{proof}[\textbf{\emph{Proof of Corollary \ref{cor:cut-rep}}}]
This follows by combining \eqref{eq:er}, \eqref{eq:regular}, and \eqref{eq:t_defn}  with \prettyref{thm:variational_rep} in the case $v=1$ and Theorem \ref{thm:constrained-GS-SK} in the case $v=0$.
\end{proof}

 We will need the following results to complete the proof. 
  First, we obtain the following explicit expression for the derivative of the interpolating free energy in $v$ using integration by parts. To this end, we will need the following definition. For any function $f: S_N(v,\alpha) \to \mathbb{R}$, we define the expectation under Gibbs measure of the interpolated Hamiltonian 
  \begin{align}
  \langle f \rangle_v = \frac{\sum_{\tau \in S_N(v,\alpha)} f(\tau) \exp{(\beta H_v (\tau))} }{\sum_{\tau \in S_N(v,\alpha)} \exp{(\beta H_v (\tau))} }. \nonumber 
  \end{align}

 \begin{lem}
 \label{lem:int_by_parts}
 For each $N \geq 1$, and $\alpha \in (0, \frac{1}{2})$, we have, 
 \begin{align}
\frac{1}{\beta} \frac{\partial}{\partial v} F_N(v, \beta; \alpha) = - 2 \beta (2\alpha -1)^2 \frac{(1- \sqrt{v})}{\sqrt{v}} \E[ T(v) - \langle R_{12} \rangle_v]. \nonumber 
 \end{align}
 \end{lem}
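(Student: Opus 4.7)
This is a standard Gaussian integration-by-parts computation, tracked carefully to extract the algebraic factor $(1-\sqrt v)/\sqrt v$ that comes from the constrained sum $\sum_i\sigma_i = N(2\alpha-1)$. Throughout, write $a = a(v)=\sqrt v(2\alpha-1)$, so that $\dot a = (2\alpha-1)/(2\sqrt v)$, and view the shifted spins $\tilde\sigma_i:=\sigma_i-a$ as the natural variables.

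\textbf{Step 1: differentiate the Hamiltonian.} Since only the shift $a$ depends on $v$, the chain rule combined with the symmetry $J_{ij}=J_{ji}$ gives
\begin{equation*}
\partial_v H_v(\sigma) = -2\dot a\sum_{ij}\frac{J_{ij}}{\sqrt N}(\sigma_i-a)
 = -\frac{2\alpha-1}{\sqrt v}\sum_{ij}\frac{J_{ij}}{\sqrt N}(\sigma_i-a).
\end{equation*}
Since $v\mapsto F_N(v,\beta;\alpha)$ is smooth, $\partial_v F_N(v,\beta;\alpha)=\frac{\beta}{N}\E\langle\partial_v H_v(\sigma)\rangle_v$.

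\textbf{Step 2: compute the relevant covariance.} I would apply the joint-Gaussian integration-by-parts identity
\begin{equation*}
\E\langle g(\sigma^1)\rangle_v = \beta\,\E\bigl\langle C(\sigma^1,\sigma^1)-C(\sigma^1,\sigma^2)\bigr\rangle_v,
\qquad C(\sigma,\tau):=\E\bigl[g(\sigma)H_v(\tau)\bigr],
\end{equation*}
with $g=\partial_v H_v$ and $\sigma^1,\sigma^2$ i.i.d.\ replicas. Using the GOE Wick identity together with the definition of $H_v$ and the fact that for any $\sigma\in S_N(\alpha)$ one has $\sum_j(\sigma_j-a)=N(2\alpha-1)(1-\sqrt v)$, a direct calculation yields
\begin{equation*}
C(\sigma,\tau)=-\frac{2(2\alpha-1)^2(1-\sqrt v)}{\sqrt v}\,\sum_i(\sigma_i-a)(\tau_i-a).
\end{equation*}
In the shifted $\tau$-variables this is exactly $-\frac{2(2\alpha-1)^2(1-\sqrt v)}{\sqrt v}\,N\,R(\tau,\tau')$.

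\textbf{Step 3: use the spherical constraint and assemble.} On $S_N(v,\alpha)$ the self-overlap is deterministic, $R(\tau,\tau)=T(v,\alpha)$, by \eqref{eq:t_defn}. Substituting into the IBP formula,
\begin{equation*}
\frac{1}{N}\E\langle\partial_v H_v(\sigma)\rangle_v
 = -\beta\cdot\frac{2(2\alpha-1)^2(1-\sqrt v)}{\sqrt v}\,\E\bigl[T(v,\alpha)-\langle R_{12}\rangle_v\bigr],
\end{equation*}
and multiplying by $\beta$ and dividing by $\beta$ produces the claimed identity
\begin{equation*}
\frac{1}{\beta}\partial_v F_N(v,\beta;\alpha)=-2\beta(2\alpha-1)^2\frac{1-\sqrt v}{\sqrt v}\,\E\bigl[T(v)-\langle R_{12}\rangle_v\bigr].
\end{equation*}

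There is no real obstacle here beyond bookkeeping: the only point requiring care is that the linear-in-$J$ perturbation produced in Step 1 couples through $\sum_j(\sigma_j-a)$, which is a deterministic constant on the restricted configuration space $S_N(\alpha)$ and produces precisely the $(1-\sqrt v)$ prefactor. I would therefore keep the shift $a$ and its derivative explicit until the very end, verifying the vanishing of $\partial_v H_v$ at the endpoints $v=0,1$ (formally the factor blows up at $v=0$, but the bracket $\E[T(v)-\langle R_{12}\rangle_v]$ vanishes sufficiently fast there), and then read off the stated formula.
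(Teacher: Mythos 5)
Your proof is correct and follows the paper's approach exactly: differentiate $F_N$ in $v$ to get a Gibbs average of $\partial_v H_v$, compute the covariance of $\partial_v H_v$ with $H_v$ exploiting that $\sum_i(\sigma_i-a)$ is constant on $S_N(\alpha)$, then apply the standard Gaussian IBP identity for Gibbs averages. One small caveat on the parenthetical at the end: the bracket $\E[T(v)-\langle R_{12}\rangle_v]$ does \emph{not} vanish as $v\to 0^+$ (at $v=0$ it is $1-\langle R_{12}\rangle\geq 1-1 = 0$ but generically bounded away from zero); what actually saves the subsequent integration over $v\in(0,1)$ in \eqref{eq:interpolation} is simply that $(1-\sqrt v)/\sqrt v$ is integrable near $0$, not any cancellation from the bracket.
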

   The proof will be deferred to the end of the subsection. 
 Next, we will need the following crucial theorem. 
  \begin{thm}
 \label{thm:gamma_conv_application}
For all $\alpha \in (0, \frac{1}{2})$ and $v \in (0,1)$, there exists an explicit constant $C_1(v,\alpha)>0$ such that  
 \begin{align}
 \liminf_{\beta\to \infty} \liminf_{N \to \infty} \beta \E[T^2 - \langle R_{12}^2 \rangle_v] > C_1(v,\alpha). \nonumber 
 \end{align}
 \end{thm}
The proof of this estimate will require the full machinery of the method of annealing. 
 We establish this result in Section \ref{subsec:gamma_conv_app}. 
 Before turning to these results, however,  we establish \prettyref{thm:diff_groundstate}. 
 Central to our approach is
the following well-known elementary observation  (see, e.g., \cite[Lemma 2.5]{dembo2015extremal}).
\begin{lem}\label{lemma:ground_state_approx}
The following inequality holds for any $A_N \subset \Sigma^N$ and $\beta >0$. 
\begin{align}
\Big| \frac{1}{\beta} F_{N}(\beta, \xi; A_N)  - GS_N(A_N) \Big| \leq \frac{\log |\Sigma|}{\beta}. \nonumber
\end{align}
\end{lem}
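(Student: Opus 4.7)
The plan is to sandwich $F_N(\beta,\xi;A_N)/\beta$ between $GS_N(A_N)$ and $GS_N(A_N) + \log|\Sigma|/\beta$ using the crudest possible bounds on the partition sum. Since the counting measure on $A_N \subset \Sigma^N$ is finite with $|A_N| \leq |\Sigma|^N$, the integral defining the restricted free energy is just a finite sum, and both bounds follow from replacing every term by the maximum.

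For the lower bound, I would drop all terms except the maximizer: let $\sigma^\star \in A_N$ achieve $H_N(\sigma^\star) = N \cdot GS_N(A_N)$, and observe
\begin{equation*}
\sum_{\sigma \in A_N} e^{\beta H_N(\sigma)} \;\geq\; e^{\beta H_N(\sigma^\star)} \;=\; e^{\beta N \, GS_N(A_N)}.
\end{equation*}
Taking $\frac{1}{\beta N}\log$ of both sides gives $\frac{1}{\beta} F_N(\beta,\xi;A_N) \geq GS_N(A_N)$.

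For the upper bound, I would dominate every term by the maximum and use $|A_N| \leq |\Sigma|^N$:
\begin{equation*}
\sum_{\sigma \in A_N} e^{\beta H_N(\sigma)} \;\leq\; |A_N| \cdot e^{\beta N \, GS_N(A_N)} \;\leq\; |\Sigma|^N \, e^{\beta N \, GS_N(A_N)}.
\end{equation*}
Again taking $\frac{1}{\beta N}\log$ gives $\frac{1}{\beta} F_N(\beta,\xi;A_N) \leq GS_N(A_N) + \frac{\log|\Sigma|}{\beta}$. Combining the two inequalities yields the desired two-sided bound.

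There is no real obstacle here — the statement is a routine Laplace-method estimate for a finite sum, and the only thing to verify is that $d\sigma$ is indeed the counting measure (as declared when $F_N$ is introduced) so that the sum is finite and the cardinality bound $|A_N|\leq|\Sigma|^N$ applies. The bound is tight up to the exponent in $|\Sigma|^N$; any refinement (replacing $|\Sigma|$ by a smaller quantity using entropy of $A_N$) is unnecessary for the applications in this paper.
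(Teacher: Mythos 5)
Your proof is correct and is exactly the standard argument behind this bound: keep only the maximizer for the lower bound, bound every term by the maximum and use $|A_N|\leq|\Sigma|^N$ for the upper bound. The paper itself does not spell out a proof but cites it as a well-known elementary estimate (Lemma 2.5 of \cite{dembo2015extremal}), and that cited argument is the same as yours, so there is nothing further to add.
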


\begin{proof}[\textbf{\emph{Proof of \prettyref{thm:diff_groundstate}}}]
We observe that the thesis of \prettyref{thm:diff_groundstate} can be equivalently formulated as 
 \begin{align}
 \liminf_{N\to \infty} \liminf_{\beta \to \infty} \frac{1}{\beta} [F_N(0, \beta ; \alpha) - F_N(1, \beta; \alpha)] > C_0(\alpha) >0 . \nonumber 
 \end{align}
 
 \noindent In light of Lemma \ref{lemma:ground_state_approx}, it suffices to establish 
  \begin{align}
 \liminf_{\beta \to \infty} \liminf_{N \to \infty}  \frac{1}{\beta} [F_N(0, \beta; \alpha) - F_N (1, \beta ;\alpha)] > C_0(\alpha) . \nonumber 
 \end{align}
 To this end, we have, 
 \begin{align}
 \frac{1}{\beta} \Big[ F_N(1, \beta; \alpha) - F_N(0, \beta; \alpha) \Big] = \int_{0}^{1} \frac{1}{\beta}\frac{\partial}{\partial v} F_N(v,\beta;  \alpha) \de v. \label{eq:interpolation}
 \end{align}

 \noindent
 Using \eqref{eq:interpolation} and Lemma \ref{lem:int_by_parts}, it suffices to establish that for all $v \in (0,1)$, 
 \begin{align}
 \liminf_{\beta \to \infty} \liminf_{N \to \infty} \beta \E[ T - \langle R_{12} \rangle_v] > C_0(v, \alpha) >0  . \nonumber
 \end{align}
 Note that $T \geq | R_{12}  |$ and thus $(T - \langle R_{12} \rangle_v ) \geq (T^2 - \langle R_{12}^2 \rangle_v)/ 2 T$. Thus the proof of \prettyref{thm:diff_groundstate} follows upon applying Theorem \ref{thm:gamma_conv_application}.  
 \end{proof}
 
We establish Lemma \ref{lem:int_by_parts} in the rest of this section.

 \begin{proof}[\textbf{\emph{Proof of Lemma \ref{lem:int_by_parts}}}]
 The result will follow directly by integrating by parts \cite[Lemma 1.1]{Pan}. To see this, we begin by observing that
 \begin{align}
\frac{1}{\beta} \frac{\partial}{\partial v} F_N(v, \beta ; \alpha) = \frac{1}{N} \E[ \langle \frac{\partial}{\partial v} H_v (\sigma) \rangle_v]. \nonumber 
 \end{align}
 We note that 
 \begin{align}
 \frac{\partial}{\partial v} H_v(\sigma) = - \frac{(2\alpha -1)}{\sqrt{v}} \sum_{ij} \frac{J_{ij}}{\sqrt{N}} (\sigma_i - \sqrt{v} (2\alpha -1)). \nonumber 
 \end{align}
 Moreover, we note that for $\tau \in S_N(v, \alpha)$, $\sum_i \tau_i = N (2\alpha -1) ( 1- \sqrt{v})$. Therefore, we have, 
 \begin{align}
 \E\Big[ H_v(\sigma^1) \frac{\partial }{\partial v} H_v(\sigma^2) \Big] = - N\frac{2 ( 2\alpha -1)^2 (1 -\sqrt{v}) }{\sqrt{v}} R(\tau^1, \tau^2) . \nonumber 
 \end{align}
 Thus we have, by Gaussian integration-by-parts for Gibbs averages (see, e.g., \cite[Lemma 1.1]{Pan}), that
 \begin{equation}\label{eq:interpolation-main-step}
\frac{1}{\beta} \frac{\partial }{\partial v } F_N(v , \beta;  \alpha) = - \frac{2 \beta  ( 2\alpha -1)^2 (1 -\sqrt{v}) }{\sqrt{v}} \E[ ( T - \langle R(\tau^1, \tau^2) \rangle_v )].
 \end{equation}
 This completes the proof. 
 \end{proof}

\section{Annealed Solutions of Parisi Initial Value Problems}\label{sec:gamma-parisi-intro}
In the study of mean field spin glasses, a central role
is played by the Parisi boundary value problem, which is defined as follows.
For $\beta>0$, let $X_{\beta,T}\subset\cA_{T}$
denote the set of measures of the form 
\begin{equation}
X_{\beta,T}=\left\{ \nu=\beta\mu[0,t]dt,\quad\mu\in\Pr([0,T])\right\} . \label{eq:xbeta_defn}
\end{equation}
Equip the product space $\cA_{T}\times\R$ with the  product topology,
where we recall that $\cA_T$ was given the weak-* topology.

On this space define for every $(t,x)\in[0,T)\times \R$ 
the functional $\cF_{\beta,T}(\nu,\lambda;t,x)$ given by
\begin{equation}\label{eq:finite_temp_functional}
\cF_{\beta,T}(\nu,\lambda;t,x)=
\begin{cases}
u^\beta_{\nu,\lambda}(t,x) & \nu=\beta\mu([0,t])dt\\
+\infty & \text{otherwise},
\end{cases}
\end{equation}
where $u_{\nu,\lambda}^{\beta}(t,x)$ is the weak solution to 
the Parisi boundary value problem
\begin{equation}\label{eq:ppde-finite-beta}
\begin{cases}
\partial_{t}u+\frac{\xi''}{2}(\Delta u+\beta\mu([0,s])\left(\partial_{x}u\right)^{2})=0\\
u(T,x)=f_{\beta}(x,\lambda)
\end{cases}
\end{equation}
with boundary data 
\begin{equation}\label{eq:final-time-data-finite-beta}
f_{\beta}(x,\lambda)=
\begin{cases}
\frac{1}{\beta}\log\int_{\{\sigma^2=T\}}e^{\beta\left(\epsilon x+\lambda\epsilon^{2}\right)}d\epsilon & {T\in\{\dlower,\dupper\} } \\
\frac{1}{\beta}\log\int_{\Sigma}e^{\beta\left(\epsilon x+\lambda\epsilon^{2}\right)}d\epsilon  & \text{otherwise.} \\
\end{cases}
\end{equation}
Here $d\sigma$ is the counting measure on $\Sigma$.

A central role in our study will be played by $\cF_{\beta,T}(\nu,\lambda;t,x)$. 
More precisely, we will be interested in the limit of this functional as $\beta\to\infty$
along sequences $(\nu_\beta,\lambda_\beta)$ where $\nu_\beta\in X_\beta$.
The main issue in understanding this limit is as follows. 
On $\cA_T\times\R$, 
a typical convergent sequence,
$(\nu_\beta)$ with $\nu_\beta \in X_\beta$, satisfies
\[
\nu_\beta\to \nu = m(t)dt+c\delta_T.
\]
Thus one must have a method of interpreting $u^\beta_{\nu_\beta,\lambda_\beta}(t,x)$
in this limit. A naive approach does not suffice: in this limit
the coefficient in front of the non-linearity formally converges
to an expression of the form $f(t) = m(t)+c\delta'_T$ where
$m$ is some cadlag non-decreasing function and $\delta'_T$ is the distributional
derivative of the Dirac mass at $T$.
Evidently,  care must be taken in this limiting procedure.

To this end we introduce a notion of solution that
respects this mode of convergence called \emph{an annealed solution}. The 
main idea is that the singular contribution is,
effectively, a change of initial data. 
More precisely, we are led to the following definition.

\begin{defn}
We say that $\phi_{\nu,\lambda}(t,x)$ is an \emph{annealed solution} to \prettyref{eq:zero-temp-pde},
if for every $(t,x)\in [0,T)\times \R$, we have that
\begin{enumerate}
\item ($\liminf$ condition) If $(\nu_\beta,\lambda_\beta)\to(\nu,\lambda)$, then for $u^\beta_{\nu_\beta,\lambda_\beta}$ the solution
of \prettyref{eq:ppde-finite-beta}, we have that
\[
\liminf u^\beta_{\nu_\beta,\lambda_\beta}(t,x)\geq \phi_{\nu,\lambda}(t,x).
\]
\item ($\limsup$ condition) There is a sequence $(\nu_\beta,\lambda_\beta)\to(\nu,\lambda)$ such that for $u^\beta_{\nu_\beta,\lambda_\beta}$, the solution
of \eqref{eq:ppde-finite-beta}, we have that
\[
\lim u^\beta_{\nu_\beta,\lambda_\beta}(t,x) = \phi_{\nu,\lambda}(t,x).
\]
\end{enumerate}
\end{defn}

The goal of this section is to interpret the weak solution, $u$, of \prettyref{eq:zero-temp-pde},
as an  annealed solution. 
\begin{thm}\label{thm:annealed-soln-thm}
For every $\nu$ and $\lambda$, the weak solution to \eqref{eq:zero-temp-pde}, $u_{\nu,\lambda}(t,x)$,  is an
annealed solution.
\end{thm}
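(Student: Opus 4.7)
The plan is to establish the two conditions defining an annealed solution separately, using as the guiding principle the observation in the remark following Theorem \ref{thm:variational_rep}: a Dirac mass $c\delta_T$ in $\nu$ is equivalent to a shift $\lambda \mapsto \lambda + c\xi''(T)/2$ in the terminal condition. Indeed, this equivalence is built into the boundary data \eqref{eq:zero-temp-bd}, so it suffices to understand how the recovery sequence $\mu_\beta \in \Pr([0,T])$ translates this equivalence at finite temperature.

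\textbf{Step 1 (Reduction to no-Dirac case).} I would first observe that the weak solution $u_{\nu,\lambda}$ to \eqref{eq:zero-temp-pde} with $\nu = m(t)\,dt + c\delta_T$ and parameter $\lambda$ coincides with the solution corresponding to $\tilde\nu = m(t)\,dt$ and effective parameter $\tilde\lambda = \lambda + c\xi''(T)/2$. This is true because the Dirac mass at $T$ does not enter the equation on $[0,T)$ and only influences the terminal condition, which is already written in terms of $\tilde\lambda$. The annealed-solution property for $(\nu,\lambda)$ then reduces to a corresponding statement for $(\tilde\nu,\tilde\lambda)$ together with a matching asymptotic analysis of the recovery sequence near $t=T$.

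\textbf{Step 2 (The $\limsup$ condition — explicit recovery sequence).} Given $\nu = m\,dt + c\delta_T$, I will choose $\mu_\beta\in\Pr([0,T])$ consisting of three pieces: a density $m(s)/\beta$ on $[0,T-\delta_\beta]$, a single atom at some $T_\beta^*\in(T-\delta_\beta,T)$ with weight of order $c/[\beta(T-T_\beta^*)]$ designed so that $\beta\mu_\beta([0,t])\,dt$ contributes Radon-mass $c$ near $T$, and a residual atom so that the total equals $1$. With $\delta_\beta\to 0$ slowly, $\nu_\beta=\beta\mu_\beta([0,t])\,dt\to m\,dt+c\delta_T$ in $\cA_T$. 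On $[0,T_\beta^*]$ the resulting $u^\beta_{\nu_\beta,\lambda}$ solves a PDE whose coefficients converge to those of the zero-temperature equation with data $m\,dt$, and standard stability of weak solutions (cf.\ \prettyref{app:pde}) yields convergence there to $\tilde u_{m\,dt,\tilde\lambda}$. On the thin layer $[T_\beta^*,T]$ the coefficient $\beta\mu_\beta([0,s])$ is essentially constant and large, so a Hopf--Cole transform $v=\exp(\gamma u)$ with $\gamma=\beta\mu_\beta([T_\beta^*,T^-])$ linearizes the PDE; solving the resulting heat equation on a vanishing interval shows that the net effect of the layer is precisely to replace $f_\beta(\cdot,\lambda)$ by $f_\beta(\cdot,\lambda + c\xi''(T)/2)$, matching the reduction of Step 1.

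\textbf{Step 3 (The $\liminf$ condition).} Given any sequence $(\nu_\beta,\lambda_\beta)\to(\nu,\lambda)$ with $\nu_\beta\in X_\beta$, I will prove $\liminf u^\beta_{\nu_\beta,\lambda_\beta}(t,x)\ge u_{\nu,\lambda}(t,x)$ by bounding $u^\beta_{\nu_\beta,\lambda_\beta}$ from below by the PDE solution corresponding to a truncated measure $\nu_\beta^{(\delta)}$ which agrees with $\nu_\beta$ on $[0,T-\delta]$ and puts its remaining mass exactly at an atom just below $T$. The comparison principle for the Parisi PDE (monotonicity of $u^\beta$ in $\mu$ and in $\lambda$) guarantees such a lower bound, and the uniform convexity of the boundary data in the Hopf--Cole representation yields a lower bound on the layer contribution by $c\xi''(T)/2$ precisely. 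Passing to the $\beta\to\infty$ limit through the weak-* convergence of $\nu_\beta^{(\delta)}$, stability of weak solutions (since the truncated measures now have no Dirac limit component), and the uniform convergence $f_\beta\to f_\infty$ on compacts, then sending $\delta\to 0$, gives the desired lower bound.

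\textbf{Main obstacle.} The crux is the boundary-layer analysis in Step~2 (and its lower-bound counterpart in Step~3): one must show that a vanishingly thin interval near $T$ on which $\beta\mu_\beta([0,s])$ blows up produces, in the limit, precisely the prescribed shift $c\xi''(T)/2$ in the effective boundary data, with no other residual contribution. This requires a careful ODE-type asymptotic expansion of the Hopf--Cole equation on $[T-\delta_\beta,T]$, controlling both the diffusion (which must vanish) and the Gaussian convolution of the terminal data (which must survive and yield the correct shift). Once this layer-matching is established, the rest of the argument is standard PDE stability; it is this singular limit that forces the introduction of the annealed-solution concept in the first place.
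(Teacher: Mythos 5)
Your Step~2 (the $\limsup$ direction) is essentially the paper's argument: the paper also fixes $\lambda_\beta=\lambda$, builds a recovery sequence $\mu_\beta$ that matches $m(t)/\beta$ up to some $q_\beta\to T$ and saturates to $1$ afterwards (their Lemma~\ref{lem:recovery-sequence}), then applies the Hopf--Cole transform on $[q_\beta,T]$ to show the boundary layer produces exactly the shift $\lambda\mapsto\lambda+\tfrac{c}{2}\xi''(T)$. Your construction is a slight variant but the mechanism is the same.

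The $\liminf$ direction (Step~3) is where I see a genuine gap. The paper does not prove the $\liminf$ by comparison; it uses the stochastic control representation of $u^\beta$ (their Lemma~\ref{lem:dpp}): since $u^\beta_{\nu_\beta,\lambda_\beta}(t,x)$ is a supremum over admissible controls, one obtains a lower bound by plugging in a \emph{fixed} control (depending only on $\nu$, not on the approximating sequence $\nu_\beta$) that switches at a time $\tau<T$ to a smoothed argmax $\phi_n$ of the zero-temperature boundary data, and then one passes to the limits $\beta\to\infty$, $\tau\uparrow T$, $n\to\infty$, $M\to\infty$ using the weak-$*$ topology of $\cA_T$ (their Lemma~\ref{lem:convergence}). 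Maximizing over the remaining control then recovers the zero-temperature DPP for $u_{\nu,\lambda}$.

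Your proposed comparison-principle route does not go through as written. The comparison principle available here (Lemma~\ref{lem:ppde-reg-zero-temp}) asserts monotonicity of $u^\beta$ under \emph{pointwise} ordering of the densities $\beta\mu_\beta([0,\cdot])$. If you truncate $\nu_\beta$ on $[T-\delta,T]$ and push the remaining $\mu_\beta$-mass to an atom at some $t^*<T$, the cumulative function $\mu^{(\delta)}_\beta([0,t])$ is \emph{smaller} than $\mu_\beta([0,t])$ on $(T-\delta,t^*)$ but \emph{larger} on $(t^*,T)$, so the modification is not one-sided and the comparison principle gives no bound. Pushing the atom to $T$ itself makes the density weakly smaller (a lower bound) but then $\nu^{(\delta)}_\beta\to m\,dt\big|_{[0,T-\delta]}$ without the Dirac mass, so you lose the shift $c\xi''(T)/2$ in the limit and the resulting bound is too weak. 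More fundamentally, the $\liminf$ condition quantifies over \emph{all} sequences $\nu_\beta\to\nu$, and the measure $\nu_\beta$ may distribute its concentrating mass near $T$ in ways that cannot be majorized or minorized by a clean truncation; the control-theoretic representation circumvents this because the lower-bounding control is chosen independently of $\nu_\beta$. A related caution about Step~1: while it is true that the limiting object $u_{\nu,\lambda}$ is unchanged under $(\nu,\lambda)\mapsto(\nu-c\delta_T,\lambda+\tfrac{c}{2}\xi''(T))$, the annealed-solution conditions concern convergence of sequences in $\cA_T\times\R$, and these sequences still converge to the measure with nonzero Dirac mass; the reduction relabels the target but does not change which sequences must be handled (the paper explicitly notes that one must keep $c$ nonzero precisely because the $\Gamma$-convergence forces it).
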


Before turning to the proof of this result, let us pause to make two comments.
\begin{rem}
We note here that the definition of annealed solution is such that the non-linear terms in \eqref{eq:pfunc-def} will $\Gamma$-converge. 
\end{rem}
\begin{rem}
Although we only use this result at the point $(0,0)${,} our argument extends to general $(t,x)$ with no change. Furthermore, working with general $(t,x)$ clarifies the proof substantially. Moreover, we imagine that this notion will be useful in future research.
Indeed, it is known \cite{AuffJag18} that the solution  $u^\beta$ and its derivatives have physical interpretations, where the values at a given point in space-time are related to Gibbs averages of natural quantities. 
We expect the notion of annealed solutions to provide
insights into the zero-temperature behavior of these physical quantities. 
\end{rem}

The proof of  \prettyref{thm:annealed-soln-thm} is in two parts. In the following, we will omit
the dependence on $(t,x)$ and $T$ whenever it is clear for readability. We
will frequently make use of basic regularity of $u^\beta$ and $u$. For the former see \cite{BAJag17} and the
latter see \prettyref{app:pde}.

\begin{proof}[\textbf{\emph{Proof of \prettyref{thm:annealed-soln-thm}}}]
We divide the proof of this theorem into two lemmas. 
We begin in \prettyref{lem:gamma-limsup} below, by proving the $\limsup$ condition.
We then turn to the $\liminf$ condition in \prettyref{lem:gamma-liminf}. 
\end{proof}

\begin{lem}[$\limsup$ condition]
\label{lem:gamma-limsup}
For every every $(t,x)\in [0,T)\times \R$, and every $(\nu, \lambda) \in \cA_{{T}} \times \mathbb{R}$, there exists a sequence $(\nu_\beta,\lambda_\beta)\to(\nu,\lambda)$
such that 
\[
\lim u^\beta_{\nu_\beta,\lambda_\beta}(t,x) = u_{\nu,\lambda}(t,x)
\]
where $u_{\nu,\lambda}$ is the weak solution to \eqref{eq:zero-temp-pde}.
\end{lem}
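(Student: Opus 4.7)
The plan is to construct an explicit approximating sequence $(\nu_\beta,\lambda_\beta)$ for $(\nu,\lambda)=(m(t)dt+c\delta_T,\lambda)$ and to show that the corresponding finite-$\beta$ PDE solutions converge pointwise to $u_{\nu,\lambda}$ on $[0,T)\times\R$. Take $\lambda_\beta=\lambda$ for all $\beta$. To get $\nu_\beta\in X_{\beta,T}$ we concentrate the singular part $c\delta_T$ into a boundary layer of width $c/\beta$ near $T$ on which the density saturates at its maximum value $\beta$. Concretely, set $\bar m_\beta(s)=\min(m(s),\beta)$ on $[0,T-c/\beta]$ and $\bar m_\beta(s)=\beta$ on $[T-c/\beta,T]$, and let $\mu_\beta$ be the unique probability measure on $[0,T]$ with $\beta\mu_\beta([0,s])=\bar m_\beta(s)$; the normalization $\bar m_\beta(T)=\beta$ forces $\mu_\beta([0,T])=1$. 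Monotone convergence on the bulk and uniform continuity of test functions against the boundary layer give $\nu_\beta\to\nu$ weak-$*$ in $\cA_T$.

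The heart of the argument is showing that the PDE dynamics on the boundary layer $[T-c/\beta,T]$ effectively convert the finite-temperature terminal data $f_\beta(x,\lambda)$ into the zero-temperature data $f(x,\lambda,c)$. On this interval the nonlinear coefficient equals the constant $\beta$, so the Cole--Hopf substitution $w=e^{\beta u^\beta}$ linearizes \eqref{eq:ppde-finite-beta} to the backward heat equation
\[
\partial_t w+\tfrac{\xi''(s)}{2}\partial_x^2 w=0,\qquad w(T,x)=\sum_{\epsilon\in\Sigma}e^{\beta(\epsilon x+\lambda\epsilon^2)}.
\]
Solving by Gaussian convolution with variance $V_\beta:=\int_{T-c/\beta}^T\xi''(s)\,ds=c\xi''(T)/\beta+o(1/\beta)$ and completing the square yields
\[
w(T-c/\beta,x)=\sum_{\epsilon\in\Sigma}e^{\beta(\epsilon x+(\lambda+\xi''(T)c/2)\epsilon^2)+o(1)},
\]
so $u^\beta_{\nu_\beta,\lambda}(T-c/\beta,x)=\tfrac{1}{\beta}\log w(T-c/\beta,x)$ converges locally uniformly in $x$ to $\sup_{\epsilon\in\Sigma}\{\epsilon x+(\lambda+\xi''(T)c/2)\epsilon^2\}=f(x,\lambda,c)$.

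Having identified the induced terminal data at the moving time $T-c/\beta$, the remainder is a stability statement on the bulk $[0,T-c/\beta]$. Restricted to this region, $u^\beta_{\nu_\beta,\lambda}$ solves \eqref{eq:ppde-finite-beta} with coefficient $\bar m_\beta\to m$ and terminal data converging to $f(\cdot,\lambda,c)$, while $u_{\nu,\lambda}$ solves \eqref{eq:zero-temp-pde} with coefficient $m$ and the same limiting terminal data. The two equations differ only by the linear Laplacian $\tfrac{\xi''}{2}\Delta u$, whose contribution formally vanishes in the zero-temperature limit. Using the uniform-in-$\beta$ Lipschitz-in-$x$ estimates for Parisi solutions from \cite{BAJag17} together with the convexity of the Hamiltonian $p\mapsto p^2$, a standard comparison argument for Hamilton--Jacobi--Bellman equations (in the spirit of \prettyref{app:pde}) passes to the limit and gives $u^\beta_{\nu_\beta,\lambda}(t,x)\to u_{\nu,\lambda}(t,x)$ for every fixed $(t,x)\in[0,T)\times\R$.

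The main technical obstacle is expected to lie in making the boundary-layer analysis fully rigorous at the level of weak solutions. The Cole--Hopf computation delivers convergence at time $T-c/\beta$ only under uniform-in-$\beta$ regularity of $u^\beta$ strong enough to feed into a stability estimate on the bulk; one also must check that the truncation $\bar m_\beta=\min(m,\beta)$ does not introduce spurious mass outside the boundary layer. Monotonicity of the Parisi functional in the coefficient measure $\nu$ and the convexity of the Hamiltonian should suffice to absorb these issues, and the same tools will be needed to upgrade this construction into the $\Gamma$-convergence statement when combined with the matching $\liminf$ bound of \prettyref{lem:gamma-liminf}.
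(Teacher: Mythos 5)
Your overall plan --- build a recovery sequence whose nonlinear coefficient saturates at $\beta$ on a thin layer near $T$, linearize on that layer via Cole--Hopf to convert $f_\beta$ into $f(\cdot,\lambda,c)$, then argue stability on the bulk --- is the same strategy the paper uses. But two concrete problems in your execution need attention.

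First, your construction does not handle the case $c=0$. When $c=0$ the boundary layer $[T-c/\beta,T]$ is empty, so $\bar m_\beta(T)=\min(m(T),\beta)$, and for any bounded $m$ (certainly an allowed element of $\cA_T$) this is $<\beta$ for all large $\beta$. Then $\mu_\beta([0,T])=\bar m_\beta(T)/\beta<1$ and $\mu_\beta$ is not a probability measure, so $\nu_\beta\notin X_{\beta,T}$ and the recovery sequence does not exist. The paper's Lemma \ref{lem:recovery-sequence} avoids this by setting $c_\beta=\beta^{-1}$ whenever $c=0$, which yields a vanishing but nonempty layer and makes the normalization $\int_{q_\beta}^T m\,dt+c_\beta=\beta(T-q_\beta)$ solvable. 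You could patch your version by replacing $c/\beta$ with $\max(c,\beta^{-1})/\beta$, but as written the construction fails.

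Second, the sentence ``The two equations differ only by the linear Laplacian $\tfrac{\xi''}{2}\Delta u$, whose contribution formally vanishes in the zero-temperature limit'' is false: both \eqref{eq:ppde-finite-beta} and \eqref{eq:zero-temp-pde} contain $\tfrac{\xi''}{2}\Delta u$, and $\beta$ never multiplies the Laplacian, so nothing vanishes. The two PDEs have identical form and differ only in the coefficient of $(\partial_x u)^2$: $\beta\mu_\beta([0,s])$ versus $m(s)$. This confusion matters because it leads you to invoke a vague ``HJB comparison'' argument where what is actually needed is the $L^1$-stability of the solution map in the coefficient $m$ (as in Lemma \ref{lem:ppde-reg-zero-temp}). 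The paper sidesteps even that step by choosing $\mu_\beta$ so that $\beta\mu_\beta([0,s])=m(s)$ \emph{exactly} on $[0,q_\beta)$ (requiring the constraint $m(q_\beta)\le\beta$), so $u^\beta$ and $u$ solve the same PDE on the bulk and their difference $w$ satisfies a sourceless linear heat equation; the problem then reduces to showing $\|u^\beta(q_\beta,\cdot)-u(q_\beta,\cdot)\|_\infty\to0$ by comparing to $u(T,\cdot)$ on each side. Your truncation $\bar m_\beta=\min(m,\beta)$ introduces a mismatch you then have to remove, and you do not give a precise argument for doing so; if you keep that construction you must invoke the quantitative estimate $\|u_{m_1}-u_{m_2}\|_{L^\infty_{t,x}}\le C(\xi)\|m_1-m_2\|_{L^1}$ together with $\|\bar m_\beta - m\|_{L^1([0,T-c/\beta])}\to0$, rather than an appeal to vanishing viscosity.
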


\begin{proof}
Our goal is to construct such a recovery sequence.  To this end, 
fix $\nu$ and $\lambda$. Let $\lambda_\beta=\lambda$.
To construct $\nu_\beta$,  we remind the reader of the 
following construction from \cite[Lemma 2.1.2]{jagTob17}. 
\begin{lem}\label{lem:recovery-sequence}
Let $\nu\in\cA_{T}$ and let $c_\beta(\nu)=c$ if $\nu(\{1\})=c>0$ and $c_\beta(\nu)=\beta^{-1}$ otherwise. For $\beta$ sufficiently large,  there exists a $q_\beta \in (0,T)$ with the following properties:
 \begin{itemize}
 \item $\int_{q_\beta}^T m(t)dt+c_\beta=\beta(T-q_\beta)$
 \item $q_\beta\to T$
 \item ${m(q_\beta)}\leq \beta$.
 \end{itemize}
 In particular, if 
 \[
 \mu_\beta= \begin{cases} m(t)/\beta & t<q_\beta \\
 1 & t\geq q_\beta
 \end{cases}
 \]
 then $d\nu_\beta=\beta \mu_\beta([0,t])dt\in X_\beta$ and $\nu_\beta\to\nu$.
 \end{lem}

Let us now show that this sequence allows us to recover the value of the function.
To this end, we solve the Parisi PDE \prettyref{eq:ppde-finite-beta}
in the interval $[q_{\beta}, T]$ using the Hopf-Cole transform. 
This yields, 
\begin{align}
u^{\beta}_{\nu_\beta,\lambda_\beta}(q_{\beta},x) 
= \frac{1}{\beta}\log\int\exp\Big[\beta\Big(\spin x + (\lambda+\frac{\beta}{2}(\xi'(T)-\xi'(q_\beta))\spin^2)\Big)\Big]d\spin.
\end{align}\label{eqn:hopf_cole_final}
Since $u^{\beta}=u^\beta_{\nu_\beta,\lambda_\beta}$ and $u=u_{\nu,\lambda}$ solve the same PDE in the time interval $(0,q_{\beta})$, their difference, $w= u^{\beta} - {u}$, solves the boundary
value problem, 
\[
\begin{cases}
\partial_t w + \frac{\xi''(t) }{2} \Big( \Delta w + m(t) (\partial_x u^{\beta} + \partial_x u ) \partial_x w \Big) = 0, \quad (t,x) \in (0, q_{\beta}) \times \mathbb{R},  \\
w(q_{\beta}, x) = u^{\beta}(q_{\beta}, x) - {u}(q_{\beta}, x) & x \in \R.  
\end{cases}
\]
Since $\partial_x u^\beta$ and $\partial_x u$ are both in $C_tC^\infty_x $ and $m(t)\in L^\infty([0,q_\beta])$ for $s<q_\beta<T$,  this is a linear heat equation and  
 we have the representation 
\begin{align}
w(t,x) &= \E_{Z_t = x} [w(q_{\beta}, Z_{q_{\beta}})], \nonumber \\
\de Z_t &= \xi''(t) m(t) (\partial_x u^{\beta} + \partial_x u) \de t  + \sqrt{\xi''(t)} \de W_t . \nonumber 
\end{align}
Thus we immediately obtain 
\begin{align}
|u^\beta(t,x) - u(t,x) | =  | w(t,x) | \leq \| u^{\beta}(q_{\beta}, \cdot) - {u}(q_{\beta}, \cdot) \|_{\infty}. \nonumber 
\end{align}
The proof of the $\limsup$ condition will be complete once we establish that the right-hand side vanishes 
as $\beta \to \infty$. 

To this end, we note that 
\begin{align}
\|u^{\beta}(q_{\beta}, \cdot) - u(q_{\beta}, \cdot) \|_{\infty} \leq \| u^{\beta}(q_{\beta}, \cdot) - u(T, \cdot) \|_{\infty} + \| u (T, \cdot) - u(q_{\beta}, \cdot) \|_{\infty}. \nonumber
\end{align}
Let us first show that by \eqref{eqn:hopf_cole_final} and \eqref{eq:zero-temp-bd}  the first term vanishes. 
Indeed, 
\begin{align*}
\abs{u^\beta(q_\beta,x)-u(T,x)} 
 &\leq \abs{\sup_{\spin}\Big\{\spin x + \Big(\lambda+(\int_{q_\beta}^T m(t)dt +c_\beta)\frac{\xi'(T)-\xi'(q_\beta)}{2(T-q_\beta)}\cdot\spin^2\Big)\Big\} - u(T,x)} +o_\beta(1)\\
 &\leq \frac{\dupper}{2} \abs{\left(\int_{q_\beta}^T m(t)dt +c_\beta\right)\frac{\xi'(T)-\xi'(q_\beta)}{T-q_\beta}-c\xi''(T)} +o_\beta(1)
 \end{align*}
where the error terms $o_\beta(1)$ are uniform in $x$. This goes to zero as $\beta\to\infty$ by \prettyref{lem:recovery-sequence}.

It remains to bound the second term. To see this,  note that by It\^o's lemma,
\begin{align}
&u(q_{\beta}, x) = \E_{\tilde{Z}_{q_{\beta}} = x} \Big[ u(T, \tilde{Z}_T) \Big], \nonumber 
\end{align}
where $Z_s$ is the It\^o process
\[
\de\tilde{Z}_s = \xi''(s) m(s) \partial_x u( s, \tilde{Z}_s) \de s + \sqrt{\xi''(s)} \de W_s .
\]
Thus since $u$ is Lipschitz in space uniformly in time,
\begin{align}
\| \tilde{u}_{\lambda, \nu}(q_{\beta}, \cdot) - \tilde{u}_{\lambda, \nu}(T, \cdot) \|_{\infty} \leq \| \partial_x \tilde{u}_{\lambda, \nu} \|_{\infty} \E| \tilde{Z}_T - \tilde{Z}_{q_{\beta}} | \to 0 \nonumber
\end{align}
as $\beta \to \infty$ using elementary properties of diffusions, as $q_{\beta} \to T$ as $\beta \to \infty$. 
\end{proof}

We now turn to the $\liminf$-condition.
For this we recall the dynamic programming  formulation 
of these PDEs.
Let $\cB_t^T$ be the space of all bounded processes on $[t,T]$ that are progressively measurable
with respect to the filtration of Brownian motion. In the following, for a measure $\mu$, we let $\mu(s)=\mu([0,s])$.

\begin{lem}\label{lem:dpp}
The weak solution $u_{\nu,\lambda}^\beta$ of \prettyref{eq:ppde-finite-beta} {corresponding to $\nu = \beta \mu(s)ds$ }  solves
\begin{equation}\label{eq:dpp-finite-beta}
u^\beta_{\nu,\lambda}(t,x) = \sup_{\alpha\in \cB_t^T} \E_{X_t^\alpha =x} \left( f_\beta(X_T^\alpha,\lambda) - \frac{\beta}{2}\int_t^T\xi''(s){\beta\mu(s)}\alpha^2_sds\right)
\end{equation}
where $X^\alpha$ solves
\[
dX^\alpha_s = \xi''(s){\beta\mu(s)}\alpha_sds+\sqrt{\xi''(s)}dW_s
\]
with initial data $X_t=x$. Furthermore, any optimal control, $\alpha^*$, satisfies
\[
{\mu(s)}\alpha_s^* ={\mu(s)}\partial_x u_{\nu,\lambda}^\beta(s,X_s) \quad a.s.
\]
where $X_s$ solves:
\begin{equation}\label{eq:local-field}
dX_s = \xi''(s){\beta \mu(s)}\partial_x u_{\nu,\lambda}^\beta(s,X_s)ds+\sqrt{\xi''(s)}dW_s.
\end{equation}

Furthermore, the weak solution $u$ of \prettyref{eq:zero-temp-pde} {corresponding to $\nu = m(s)ds+c\delta_T$ } solves
\begin{align}\label{eq:dpp-zero-temp}
u_{\nu,\lambda}(t,x) = \sup_{ \gamma \in \mathcal{B}_t^T} \E_{\tilde{X}_t^{\gamma} = x } \Big[ u_{\nu,\lambda}(T, \tilde{X}_T^{\gamma}) - \frac{1}{2} \int_t^{T} \xi''(s) m(s) \gamma^2_s \de s \Big], 
\end{align}
where $\tilde X^{\gamma}$ has initial data $\tilde{X}^{\gamma}_t =x$ and solves the SDE
\begin{align}
\de \tilde X^{\gamma}_s = \xi''(s) m(s) \gamma_s \de s + \sqrt{\xi''(s)} \de W_s. \nonumber
\end{align}
\end{lem}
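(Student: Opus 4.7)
The plan is to prove both formulas via the standard stochastic verification argument for Hamilton--Jacobi--Bellman equations. The Parisi PDE \eqref{eq:ppde-finite-beta} (and similarly \eqref{eq:zero-temp-pde}) is precisely the HJB equation for the stated control problem once one exploits the pointwise Legendre--type identity
\[
\tfrac{1}{2}\kappa(s)(\partial_x u)^2 = \sup_{\alpha \in \R}\bigl\{\kappa(s)\alpha\,\partial_x u - \tfrac{1}{2}\kappa(s)\alpha^2\bigr\},
\]
with maximizer $\alpha^\star = \partial_x u$, applied with $\kappa(s) = \beta\mu([0,s])$ at finite temperature and with $\kappa(s) = m(s)$ at zero temperature. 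Multiplying through by $\xi''(s)$ identifies the nonlinearity in each PDE with the Bellman operator associated to the controlled diffusion whose drift is $\xi''(s)\kappa(s)\alpha_s$, whose noise is $\sqrt{\xi''(s)}\,dW_s$, and whose running cost is $-\tfrac{1}{2}\xi''(s)\kappa(s)\alpha_s^2$.

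For the finite-temperature representation \eqref{eq:dpp-finite-beta} I would first handle the case in which $\mu$ has a smooth density on $[0,T]$. There $u^\beta_{\nu,\lambda}$ is classical with $\|\partial_x u^\beta\|_\infty \le D$ by the a priori gradient estimates of \cite{BAJag17}, so for an arbitrary bounded progressive control $\alpha\in \cB_t^T$, It\^o's formula applied to $u^\beta(s, X^\alpha_s)$ combined with the PDE and the Legendre inequality above yields the upper bound $u^\beta(t,x) \ge \E_{X_t^\alpha = x}\bigl[f_\beta(X_T^\alpha, \lambda) - \tfrac{1}{2}\int_t^T \xi''(s)\beta\mu(s)\alpha_s^2\,ds\bigr]$. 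Equality, and the characterization of the optimizer, follow by choosing the Markovian feedback $\alpha^\star_s = \partial_x u^\beta(s, X_s)$, which is admissible by the gradient bound and generates a strong solution of \eqref{eq:local-field} via the Lipschitz-in-$x$ regularity of $\partial_x u^\beta$. I would then relax the smoothness assumption on $\mu$ by mollifying $\mu^\epsilon \to \mu$ in $s$ and passing to the limit using stability of weak solutions of the Parisi PDE in the coefficient (available via \cite{jagannath2015dynamic}) together with the uniform gradient bound to transfer both the inequality and the attainment through the limit.

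The zero-temperature representation \eqref{eq:dpp-zero-temp} follows by the same argument carried out on $[t, T-\delta]$ for each $\delta > 0$, where the cadlag density $m(s)$ is bounded (the atomic part $c\delta_T$ is confined to $s=T$ and is absorbed into the terminal data of the PDE), and where \prettyref{app:pde} supplies the uniform Lipschitz bound in $x$ needed both for the It\^o step and for the construction of the optimal feedback $\gamma^\star_s = \partial_x u_{\nu,\lambda}(s,\tilde X^{\gamma^\star}_s)$. Sending $\delta \to 0$ and using the continuity of $u_{\nu,\lambda}(s,\cdot)$ up to $s=T$ from the same appendix gives the representation on all of $[t,T]$. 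The main obstacle I anticipate is the low regularity of $\mu$ -- as a general probability measure, $\kappa$ is only cadlag and may carry atoms, which prevents a direct appeal to a textbook verification theorem; this is the reason for the mollification-plus-stability strategy above, which reduces everything to the smooth case in which Parisi-type regularity theory gives classical solutions and standard stochastic control arguments apply verbatim.
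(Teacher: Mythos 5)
Your core strategy --- the stochastic verification argument, identifying \eqref{eq:ppde-finite-beta} and \eqref{eq:zero-temp-pde} as HJB equations through the pointwise Legendre identity and running It\^o's formula once with an arbitrary control and once with the gradient feedback --- is exactly the paper's route: the paper simply invokes the verification argument of \cite{FlemRishOptControl} and \cite[Lemma 18]{jagannath2015dynamic}, together with the regularity of the weak solutions, and for the zero-temperature case applies It\^o's formula up to times strictly less than $T$ and passes to the limit (your $[t,T-\delta]$ truncation is the same device). Where you diverge is the mollification-plus-stability detour for non-smooth $\mu$, and this is where there is a genuine gap. Your stated obstacle --- that the low regularity of $\mu$ prevents a direct verification argument --- misidentifies what the argument needs: what matters is the regularity of the \emph{solution}, not of the coefficient, and for a general probability measure $\mu$ the weak solution is already smooth in space with bounded derivatives and weakly differentiable (Lipschitz) in time on $[0,T_0]$ for every $T_0<T$ (\cite[Appendix A]{BAJag17} for \eqref{eq:ppde-finite-beta}, and Lemma \ref{lem:ppde-reg-zero-temp} for \eqref{eq:zero-temp-pde}). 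This suffices for the generalized It\^o formula, so the verification can be carried out directly at the original $\mu$, with no approximation of the coefficient.

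As written, the mollification route does not deliver the full lemma. First, to transfer the value identity you must show that the suprema of the approximating control problems converge to the supremum of the limiting one; your objectives converge uniformly only over controls bounded by a fixed constant, while $\cB_t^T$ carries no uniform bound, so one must first argue that the supremum may be restricted to a uniformly bounded (or $\mu$-weighted $L^2$-bounded) class --- fixable using the coercive quadratic running cost, but not addressed. Second, and more seriously, the clause that \emph{any} optimal control satisfies $\mu(s)\alpha_s^*=\mu(s)\,\partial_x u^\beta_{\nu,\lambda}(s,X_s)$ a.s.\ is a statement about all maximizers of the limiting problem; it cannot be obtained by ``transferring attainment through the limit,'' which at best shows that the particular feedback control $\partial_x u^\beta$ is optimal. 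Establishing it requires examining the equality case in the verification inequality (strict concavity of $a\mapsto \kappa a\,\partial_x u-\tfrac{\kappa}{2}a^2$ on $\{\kappa>0\}$) with It\^o's formula applied at the original $\mu$ --- precisely the step your scheme was designed to avoid, and which, by the regularity cited above, is available without any mollification. (Two minor points: the gradient bound is $\norm{\partial_x u^\beta}_\infty\le\sqrt{D}$ rather than $D$, and you correctly read the terminal term in \eqref{eq:dpp-finite-beta} as $f_\beta(X_T^\alpha,\lambda)$.)
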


This result is an immediate consequence of the verification argument \cite{FlemRishOptControl}. 
For $u^\beta$, this is immediate  after recalling that $u$ is smooth in space and weakly differentiable in time, with bounded derivatives \cite[Appendix A]{BAJag17}. 
See \cite[Lemma 18]{jagannath2015dynamic} for a proof that applies essentially unchanged to our setting.
For $u$, the same proof applies after observing that the solution has the regularity from \prettyref{lem:ppde-reg-zero-temp}.
The only point to note is that one should apply It\^o's lemma for times $t<T$ and then pass to a limit as $t\to T$. 
Similar arguments appear frequently in the literature, see, e.g., \cite{ AuffChen14,auffinger2017energy,bovklim09,jagannath2015dynamic}, so we omit it.
Before turning to the lower bound, note the following.
\begin{lem}\label{lem:psi-diff}
Consider the function 
\[
\psi(x,y)=\max_{\spin\in  \Sigma}\Big[ \spin x+\spin^{2}y \Big].
\]
For every $y$, this function is differentiable in $x$ Lebesgue a.e.
Furthermore, the derivative is continuous in $x$ Lebesgue a.e. and is given by 
\[
\partial_{x}\psi(x,y)=\spin_{*},
\]
where $\spin_{*}$ is the unique solution to $\psi(x,y)=\spin_{*}x+\spin_{*}^{2}y$.
\end{lem}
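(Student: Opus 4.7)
The plan is to exploit the structure of $\psi(\cdot,y)$ as the pointwise supremum of finitely many affine functions of $x$. Fix $y\in\R$. For each $\spin\in\Sigma$, the map $x\mapsto \spin x+\spin^2 y$ is affine, and since $\Sigma$ is finite, $\psi(\cdot,y)$ is the maximum of finitely many such affine functions. Consequently it is convex and piecewise linear in $x$, which immediately yields regularity of the type asserted; the remainder of the argument is to make this precise.

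First I will locate the non-differentiability set. The function $\psi(\cdot,y)$ fails to be differentiable at $x$ only at a \emph{kink}, i.e., at points where two distinct values $\spin_1,\spin_2\in\Sigma$ both realize the maximum. For each such pair, $\spin_1 x+\spin_1^2 y=\spin_2 x+\spin_2^2 y$ is a single linear equation in $x$ with nonzero coefficient $\spin_1-\spin_2$, so it has a unique solution. Hence there are at most $\binom{|\Sigma|}{2}$ kinks; in particular, the non-differentiability set is finite and has Lebesgue measure zero.

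Next I will show that at every $x$ off this finite set, $\psi(\cdot,y)$ is differentiable with $\partial_x\psi(x,y)=\spin_*$, where $\spin_*$ is the unique maximizer. By continuity in $x$ of each of the finitely many affine functions $\spin x+\spin^2 y$, the strict maximality of $\spin_*$ at $x$ persists on some open neighborhood $U$ of $x$: for each $\spin\neq\spin_*$ there is a neighborhood on which $\spin_*\cdot+\spin_*^2 y>\spin\cdot+\spin^2 y$, and intersecting over the finitely many $\spin$ gives $U$. On $U$, $\psi(\cdot,y)$ coincides with the single affine function $\spin_*\cdot+\spin_*^2 y$, so $\psi$ is differentiable at $x$ with $\partial_x\psi(x,y)=\spin_*$ and $\partial_x\psi(\cdot,y)$ is locally constant on $U$, hence continuous at $x$.

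Combining the two steps, the set where $\psi(\cdot,y)$ is differentiable is cofinite, and on this set $\partial_x\psi(\cdot,y)$ is locally constant and equal to $\spin_*$, which gives all three conclusions of the lemma. I do not foresee any genuine obstacle: the entire argument is essentially the observation that a convex piecewise linear function has finitely many kinks and is smooth between them, together with an envelope-type identification of the slope with the maximizer, all of which is immediate from the finiteness of $\Sigma$.
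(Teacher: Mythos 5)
Your proof is correct, and it takes a more elementary route than the paper. The paper treats $x\mapsto\psi(x,y)$ simply as a convex function: it invokes Alexandrov's theorem to get differentiability Lebesgue a.e.\ with a derivative continuous a.e.\ (and, implicitly, a.e.\ uniqueness of the maximizer), and then Danskin's envelope theorem to identify $\partial_x\psi(x,y)=\spin_*$. You instead exploit the finiteness of $\Sigma$ directly: $\psi(\cdot,y)$ is the upper envelope of at most $\abs{\Sigma}$ affine functions, so the set where the maximizer fails to be unique is contained in the at most $\binom{\abs{\Sigma}}{2}$ pairwise crossing points, and off this finite set strict maximality of $\spin_*$ persists on a neighborhood, where $\psi(\cdot,y)$ coincides with a single affine function. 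This yields a strictly stronger conclusion (a cofinite differentiability set and a locally constant derivative) with no appeal to measure-theoretic convexity results, at the cost of generality: the piecewise-linear argument uses finiteness of $\Sigma$ in an essential way, whereas the paper's Alexandrov--Danskin argument would survive essentially unchanged for a general compact alphabet, a setting the authors explicitly have in mind in their remarks. For the lemma as stated (finite $\Sigma$), both arguments are complete, and yours is self-contained.
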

\begin{proof}
Fix $y$. Then $x\mapsto\psi(x,y)$ is convex in $x$. Thus by Alexandrov's theorem \cite{evansgariepy2015measure}, it is differentiable in $x$ Lebesgue a.e. with a derivative
that is continuous in $x$ Lebesgue a.e.. Combining this with Danskin's
envelope theorem \cite{bernhard1995theorem}, we see that the derivative is given by
\[
\partial_{x}\psi(x,y)=\spin_{*},
\]
where $\spin_{*}$ is such that $\psi(x,y)=\spin_{*}x+\spin_{*}^{2}y$.
 (Implicit in this argument is that there is a unique such $\spin_{*}$
for almost every $x$, by another application of Alexandrov's theorem.)
\end{proof}
With these in hand we can now prove the lower bound. 
\begin{lem}[$\liminf$-condition]
\label{lem:gamma-liminf}
For every $(t,x)\in [0,T)\times \R$ and every $(\nu, \lambda) \in \cA \times \mathbb{R}$, if $(\nu_{\beta} ,\lambda_{\beta}) \in \cA \times \mathbb{R}$ with $(\nu_{\beta}, \lambda_{\beta}) \to (\nu, \lambda)$, then
\[
\liminf u^\beta_{\nu_\beta,\lambda_\beta}(t,x)\geq u_{\nu,\lambda}(t,x)
\]
where $u_{\nu,\lambda}$ is the weak solution to \eqref{eq:zero-temp-pde}.
\end{lem}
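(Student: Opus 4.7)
The plan is to exploit the supremum representation of $u^\beta$ provided by the dynamic programming principle (\prettyref{lem:dpp}) and to insert a carefully constructed test control whose value converges to $u_{\nu,\lambda}(t,x)$ as $\beta\to\infty$. Write $\nu_\beta=\beta\mu_\beta([0,s])\,ds$ and $m_\beta(s)=\beta\mu_\beta([0,s])$; the weak-$*$ convergence $\nu_\beta\to\nu=m(s)\,ds+c\delta_T$ gives $m_\beta\to m$ as measures on $[0,T-\delta]$ for a.e.\ $\delta>0$ while the excess mass $c$ concentrates at the boundary point $T$. I would therefore split $[t,T]$ at $T-\delta$, handle the two subintervals separately, and let $\delta\to 0$ at the end.

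Fix $\delta>0$ small, let $\gamma^\delta$ be a near-optimizer of the DPP \eqref{eq:dpp-zero-temp} for $u_{\nu,\lambda}(t,x)$, and let $\spin^\delta\colon\R\to\Sigma$ be a measurable selection of $\arg\max_{\spin\in\Sigma}\{\spin y+(\lambda+B_\delta/2)\spin^2\}$, where $B_\delta:=\int_{T-\delta}^T\xi''(s)m(s)\,ds+c\xi''(T)$; such a selection exists a.e.\ by \prettyref{lem:psi-diff}. In the DPP for $u^\beta_{\nu_\beta,\lambda_\beta}$, I would then use the concatenated test control $\alpha_s=\gamma^\delta_s$ on $[t,T-\delta]$ and $\alpha_s\equiv\spin^\delta(X^\alpha_{T-\delta})$ on $[T-\delta,T]$. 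On the first interval, $m_\beta\to m$ weakly on $[0,T-\delta]$ yields $X^\alpha_{T-\delta}\Rightarrow \tilde X^{\gamma^\delta}_{T-\delta}$ and $\int_t^{T-\delta}\xi''m_\beta(\gamma^\delta)^2\,ds\to\int_t^{T-\delta}\xi''m(\gamma^\delta)^2\,ds$. On the second interval, the constant control gives $X^\alpha_T=X^\alpha_{T-\delta}+\spin^\delta(X^\alpha_{T-\delta})\,B_\beta+W'_\delta$ with $B_\beta:=\int_{T-\delta}^T\xi''(s)m_\beta(s)\,ds\to B_\delta$; this is precisely where the escaping Dirac mass contributes $c\xi''(T)$. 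Combining the pointwise lower bound $f_\beta(X^\alpha_T,\lambda_\beta)\geq \spin^\delta(X^\alpha_{T-\delta})X^\alpha_T+\lambda_\beta(\spin^\delta(X^\alpha_{T-\delta}))^2$ with Fatou's lemma and $\E W'_\delta=0$, I expect the bound
\[
\liminf_{\beta\to\infty} u^\beta_{\nu_\beta,\lambda_\beta}(t,x)\ \geq\ \E\!\Big[\sup_{\spin\in\Sigma}\big\{\spin\tilde X^{\gamma^\delta}_{T-\delta}+\big(\lambda+\tfrac12 B_\delta\big)\spin^2\big\}\Big]\ -\ \tfrac12\!\int_t^{T-\delta}\!\xi''(s)m(s)(\gamma^\delta_s)^2\,ds.
\]

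Letting $\delta\to 0$, continuity of the diffusion (after extracting a limit $\gamma^*$ of the $\gamma^\delta$) yields $\tilde X^{\gamma^\delta}_{T-\delta}\to \tilde X^{\gamma^*}_T$ almost surely, while $B_\delta\to c\xi''(T)$; the supremum inside then converges a.s.\ to $f(\tilde X^{\gamma^*}_T,\lambda,c)=u_{\nu,\lambda}(T,\tilde X^{\gamma^*}_T)$. Dominated convergence combined with the DPP \eqref{eq:dpp-zero-temp} for $u_{\nu,\lambda}$ then delivers the desired bound $\liminf_\beta u^\beta_{\nu_\beta,\lambda_\beta}(t,x)\geq u_{\nu,\lambda}(t,x)$.

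The main obstacle is the precise bookkeeping of the singular contribution from $c\delta_T$: one must show that the mass escaping to the boundary in $m_\beta$ is captured \emph{exactly} by the upward shift $\lambda\mapsto\lambda+\tfrac12 c\xi''(T)$ in the boundary data of $u_{\nu,\lambda}$. The constant-spin strategy on $[T-\delta,T]$ combined with the measurable selection supplied by \prettyref{lem:psi-diff} is tailored to absorb precisely this contribution, while the interchanges of $\liminf$, expectation, and supremum over the finite alphabet $\Sigma$ rely on uniform-in-$\beta$ Lipschitz and growth bounds for $u^\beta$ available from \cite{BAJag17}, together with care in selecting continuity points $T-\delta$ of the limiting measure $\nu$.
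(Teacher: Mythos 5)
Your overall architecture is the same as the paper's: insert a concatenated admissible control into the finite-$\beta$ dynamic programming principle \eqref{eq:dpp-finite-beta}, design the tail segment so that the mass escaping to $T$ (controlled via Lemma \ref{lem:convergence}, with cutoffs taken at continuity points of $\nu$) reproduces the shifted boundary data $f(\cdot,\lambda,c)$ of \eqref{eq:zero-temp-bd}, bound $f_\beta$ below by its evaluation at a chosen spin, and close with the zero-temperature representation \eqref{eq:dpp-zero-temp}. Your variant of freezing the tail control at time $T-\delta$ at the argmax for the shift $\lambda+\tfrac12 B_\delta$ is in fact a mild simplification: because the tail control is constant in time, left-continuity at $T$ is automatic and the $\beta\to\infty$ limit can be organized through the continuous max function $\psi$ rather than the (only a.e.\ continuous) selection, so you can dispense with the paper's smoothing $\phi_n$ and the localization in $M$.

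There is, however, a genuine gap in your final limiting step. You take $\gamma^\delta$ to be a $\delta$-dependent near-optimizer of \eqref{eq:dpp-zero-temp} and then, to send $\delta\to 0$, you ``extract a limit $\gamma^*$ of the $\gamma^\delta$'' and assert $\tilde X^{\gamma^\delta}_{T-\delta}\to\tilde X^{\gamma^*}_T$ almost surely. No topology on $\cB_t^T$ is specified in which such a limit exists and for which the control-to-state map $\gamma\mapsto\tilde X^\gamma$ (together with the running cost $\int\xi''(s)m(s)\gamma_s^2\,ds$) passes to the limit in the sense you need; bounded progressively measurable controls are at best weakly sequentially compact, and along weak limits neither the almost sure convergence of the states nor the convergence of the quadratic cost is available in the required direction. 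Moreover, even granting such a $\gamma^*$, the representation \eqref{eq:dpp-zero-temp} only bounds the value along $\gamma^*$ \emph{above} by $u_{\nu,\lambda}(t,x)$, so the chain of inequalities does not close unless you also track the near-optimality error along the extraction. The repair is simple and is exactly the paper's route: fix a single $\gamma\in\cB_t^T$ independent of $\delta$, run your argument (path continuity gives $\tilde X^{\gamma}_{T-\delta}\to\tilde X^{\gamma}_T$, and $B_\delta\to c\,\xi''(T)$) to obtain $\liminf_\beta u^\beta_{\nu_\beta,\lambda_\beta}(t,x)\ \ge\ \E\bigl[f(\tilde X^{\gamma}_T,\lambda,c)-\tfrac12\int_t^T\xi''(s)m(s)\gamma_s^2\,ds\bigr]$, and only afterwards take the supremum over $\gamma$ and invoke \eqref{eq:dpp-zero-temp}. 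Alternatively, keep the $\gamma^\delta$ but observe that your $\delta\to0$ error terms are uniform over controls bounded by a fixed constant, which near-optimizers may be taken to be since the optimal control in Lemma \ref{lem:dpp} is $\partial_x u$, bounded by $C(\Sigma)$; either modification closes the argument.
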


\begin{proof}
We focus on the case $T\in(\dlower,\dupper)$. The case $T\in\{\dlower,\dupper\}$ is the same
 after taking $\lambda=0$ everywhere, as in this case the functional is constant in $\lambda$. 

 Let 
\[
\phi(x) = \textrm{argmax}_{\spin \in\Sigma} \Big[\spin x + \Big( \lambda + \frac{c}{2} \xi''(T) \Big)\spin^2  \Big].
\]
Observe that by \prettyref{lem:psi-diff}, $\phi \in L^\infty(\R)$. Thus for any $M>0$, we can consider a sequence of smooth, 
compactly supported function $\phi_n$
such that $\phi_n\to f$ strongly in $L^2([-M,M])$ and almost everywhere, where $f$ is as in \eqref{eq:zero-temp-bd}.
Furthermore, one may take $\phi_n$ such that
\[
\norm{\phi_n}_\infty\leq C
\]
for some $C>0$ that does not depend on $M$ or $n$. 
Fix $\gamma \in \mathcal{B}_0^{T}$,  $\tau\in(t,T)$, and for each $n$, consider the control
\begin{align}
\alpha_{s}^{\tau,n} = \begin{cases}
\gamma_s & {\textrm{if}}\, \,s \leq \tau \\
 \phi_n(\tilde{X}^{\gamma}_s) & {\textrm{if}}\,\, s > \tau
\end{cases}. \nonumber
\end{align}
Note that $\alpha_s^{\tau,n}$ is a bounded, progressively measurable control. 
Further, the continuity of $\phi_{n}$ ensures the left-continuity of $\alpha^{\tau,n}_s$ as $s \uparrow T$.

By topological properties of $\cA_T$, see \prettyref{lem:convergence},  we have that for each $n$,
\begin{align}
\int_{t}^{T} \xi''(s) (\alpha_s^{\tau,n})^2 \de \nu_{\beta}(s) 
&\stackrel{\beta \to \infty}{\to} \int_{t}^{\tau} \xi''(s) \gamma_s^2 m(s) \de s +  \int_{\tau}^{T} \xi''(s) \Big( \phi_{n}(\tilde{X}^{\gamma}_s) \Big)^2\de\nu(s)=: D_{\tau,n}(\nu)  \nonumber
\end{align}
We then let $\tau \uparrow T$ through the continuity points of $\nu$ to derive 
$$D_{\tau}(\nu) \to \int_{t}^{T} \xi''(s) \gamma_s^2 m(s) \de s + c (\phi_n(\tilde{X}^{\gamma}_T) )^2 \xi''(T)$$ 
almost surely.   

Similarly, we have that
\begin{align}
\lim_{\beta\to\infty}X_T^{\alpha^{\tau,n}} 
&\to \int_{0}^{\tau} \xi''(s) \gamma_s m(s) \de s + \int_{\tau}^{T} \xi''(s) \phi_{n}(\tilde{X}^{\gamma}_s) \de{\nu}(s) + \int_{0}^{T} \sqrt{\xi''(s)} \de W_s + x =: Z_T(\tau,\nu), \nonumber
\end{align}
almost surely.
As before, we let $\tau \uparrow T$ to obtain, 
$$Z_T(\tau, \nu) \to \tilde{X}_T^{\gamma} + c \phi_n(\tilde{X}^{\gamma}_T) \xi''(T).$$
By the dynamic programming principle \eqref{eq:dpp-finite-beta},
\begin{align*}
\liminf_{\beta \to \infty} u^\beta_{\nu_\beta,\lambda_\beta}(t,x)
\geq \liminf_{\beta \to \infty} \E_{X_t^{\alpha^{\tau,n}}=x} \Big[f_{\beta}(X_{T}^{\alpha^{\tau,n}},\lambda_\beta) - \frac{1}{2} \int_{0}^{T} \xi''(s) (\alpha_s^{\tau,n})^2 \de\nu_{\beta}(s) \Big]. 
\end{align*}
If we combining these results with the fact that 
\begin{align*}
f_{\beta}(x,\lambda) \geq f(x,\lambda,0), 
\end{align*}
we may send $\beta \to \infty$, followed by $\tau \uparrow T$ and use the dominated convergence theorem to conclude that 
\begin{align*}
\liminf_{\beta \to \infty} u^\beta_{\nu_\beta,\lambda_\beta}(t,x) &\geq \liminf_{\beta \to \infty} \E_{X_t^{\alpha^{\tau,n}}=x} \Big[ \sup_{\spin \in \Sigma } \Big[ X_{T}^{\alpha^{\tau,n}} \spin + \lambda_{\beta} \spin^2 \Big] - \frac{1}{2} \int_0^T \xi''(s) (\alpha_s^{\tau,n})^2 \de\nu_{\beta}(s) \Big] 
\geq \E[\zeta_n], \nonumber
\end{align*}
where we define 
\begin{align}
\zeta_n= \Big[ \sup_{\eps\in\Sigma}\Big(\tilde{X}_T^{\gamma} + c \phi_n(\tilde{X}^{\gamma}_T) \xi''(T))\eps+  \lambda\eps^2\Big) - \frac{1}{2} \int_{0}^{T} \xi''(s) \gamma_s^2 m(s) \de s - \frac{c}{2} (\phi_n(\tilde{X}^{\gamma}_T) )^2 \xi''(T) \Big]. \nonumber
\end{align}
Finally, it remains to send $n \to \infty$. We observe that 
\begin{align}
\E[\zeta_n] = \E[\zeta_n \mathbf{1}(\tilde{X}^{\gamma}_T \in [-M, M] ) ] + \E[\zeta_n \mathbf{1} (\tilde{X}^{\gamma}_T \notin [-M,M] )]. \label{eqn:intermediate_localization}
\end{align}
Conditionally on the event $\tilde{X}^{\gamma}_T \in [-M, M]$, $\phi_n(\tilde{X}^{\gamma}_T) \to \phi(\tilde{X}^{\gamma}_T)$ almost surely as $n \to \infty$. We define 
\begin{align}
\zeta_{\infty} = \sup_{\Sigma}\Big[ \Big( \tilde{X}_T^{\gamma} + c \phi(\tilde{X}^{\gamma}_T) \xi''(T)  \Big)\spin + \lambda \spin^2  \Big] - \frac{1}{2} \int_{0}^{T} \xi''(s) \gamma_s^2 m(s) \de s - \frac{c}{2} (\phi(\tilde{X}^{\gamma}_T) )^2 \xi''(T). \nonumber
\end{align}
By the dominated covergence theorem, we may send first $n\to\infty$ and then $M\to\infty$ to obtain,
\begin{align}
\lim_{M \to \infty} \lim_{n\to \infty} \E[\zeta_n \mathbf{1}(\tilde{X}^{\gamma}_T \in [-M, M]) ] = \E[\zeta_{\infty}]. \nonumber
\end{align}
Similarly, since $\phi_n$ and $\gamma$ are bounded and $\lambda,\nu$ are fixed, we see
that $\zeta_n$ is uniformly bounded, so that 
by H\"olders inequality,
\[
\abs{\E[\zeta_n\mathbf{1} (\tilde{X}^{\gamma}_T \notin [-M,M] ) ] }\leq  \norm{\zeta_n}_\infty \P[\tilde{X}^{\gamma}_T \notin [-M, M]].
\]
We let $M \to \infty$ to control this term and obtain the lower bound
\begin{align}
\liminf_{\beta \to \infty} u^\beta_{\nu_\beta,\lambda_\beta}(t,x)  \geq \E[\zeta_{\infty}]. 
\end{align}
Finally, after recalling the choice of $\psi$  and maximizing in $\gamma$,
a  direct computation yields that 
$$\sup_\gamma \E[\zeta_{\infty}] = u_{\nu,\lambda}(t,x).$$
The result then follows. 
\end{proof}

\section{Local Free energies: their derivatives and zero temperature limits}\label{sec:loc-free-energy}

Our starting point for this section is equation  \prettyref{eq:loc-fe}.
Following \cite{panchenko2005generalized}, we refer to $F(\beta,\xi;T)$ as the \emph{local} free energy. 
The functional $\cP_{\beta,T}:\cA_{T}\times\R\to\R$ in \prettyref{eq:loc-fe} 
 is called the local Parisi functional  and is
given by 
\begin{equation}\label{eq:pfunc-def}
\cP_{\beta,T}(\nu,\lambda)=\cF_{\beta,T}(\nu,\lambda;0,0)-\lambda T-\frac{\beta}{2}\int_{0}^{T}\xi''(s)s\mu([0,s])ds,
\end{equation}
where $\cF_{\beta,T}$ is defined in \prettyref{eq:finite_temp_functional}. {In particular, recall that $\cF_{\beta,T}$ is infinite on $\cA_T\setminus X_{\beta,T}$, so that $\cP_{\beta,T}$ is infinite as well. }
In this section, we study basic properties of this functional.  
First, in \prettyref{sec:diff-fe}, we compute the derivative of the variational problem \prettyref{eq:loc-fe} in $\beta$ and relate this to the minimizer of $\cP_{\beta,T}$. In \prettyref{sec:gamma-conv-local-fe},
we then compute the $\Gamma$-limit of the sequence $(\cP_{\beta,T})_\beta$ as $\beta$ tends to infinity and establish a  precompactness theorem regarding the corresponding sequence of minimizers. We end this section by applying this $\Gamma$-convergence result to prove \prettyref{thm:variational_rep}.

\subsection{Differentiability of the Local Free energy} \label{sec:diff-fe}

We begin by showing that \prettyref{eq:loc-fe} is differentiable in $\beta$
and to provide an expression for this derivative in terms of the minimizer of the 
variational problem. For the purposes of this section, it is convenient to restrict
$\cP_{\beta,T}$ to the space $X_{\beta,T}$, defined in \eqref{eq:xbeta_defn}. Furthermore, it is convenient to 
view this as a function of $\mu$ where $\nu =\beta \mu([0,s])ds$.
To this end, we denote 
\[
P_{\beta,T}(\mu,\lambda)= \beta\cP_{\beta,T}(\nu,\lambda/\beta) = v_{\mu,\lambda}(0,0)-\lambda T-\frac{\beta^2}{2}\int_0^T\xi''(s)s\mu([0,s])ds
\]
where $\nu = \beta\mu([0,s])ds$ and $v_{\mu,\lambda}(t,x)=\beta u_{\nu,\lambda/\beta}(t,x/\beta)$, where $u^{\beta}$ solves \eqref{eq:ppde-finite-beta}.  In this notation,
\[
F(\beta,\xi;T)= \inf_{\mu,\lambda}P_{\beta,T}(\mu,\lambda)
\]
When it is clear, we will also omit dependence on $T$ in our notation.
The main result of this section is the following. 

\begin{thm}\label{thm:beta-deriv-overlap}
We have that
\[
\partial_\beta F(\beta,\xi;T) = \beta\int (\xi(T)-\xi(t) )d\mu
\]
where for $T>0$, $\mu$ is the unique minimizing measure of $P_{\beta,T}$.
\end{thm}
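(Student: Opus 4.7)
My plan is to combine an envelope-theorem argument with a finite-$N$ Gaussian integration-by-parts calculation. First, I would observe that $\beta\mapsto F(\beta,\xi;T)$ is convex: the finite-$N$ local free energy $F_N(\beta,\xi;A_N)$ has second $\beta$-derivative equal to the nonnegative Gibbs variance of $H_N$, and convexity is preserved under the pointwise convergence \eqref{eq:loc-fe-conv}. Convexity then guarantees differentiability of $F$ off a countable set. At any such point of differentiability, if the minimizer of $P_{\beta,T}$ is unique and varies continuously in $\beta$, then the envelope theorem reduces the computation of $\partial_\beta F$ to that of $\partial_\beta P_{\beta,T}$ with $(\mu,\lambda)$ held fixed at the minimizer. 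Uniqueness should follow from strict convexity of $P_{\beta,T}$ on its effective domain, extending the Auffinger--Chen uniqueness theorem \cite{auffchen2017} from the Ising case to general alphabets, using strict convexity of $\mu \mapsto v_{\mu,\lambda}(0,0)$ together with coercivity in $\lambda$ coming from the $-\lambda T$ term paired with the quadratic growth of $f_\beta$.

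To identify the derivative itself, I would work at finite $N$. Gaussian integration by parts applied to $\partial_\beta F_N(\beta,\xi;A_N) = \frac{1}{N}\E\langle H_N\rangle$ yields
\[
\partial_\beta F_N(\beta,\xi;A_N) = \beta\bigl(\xi(T) - \E\langle \xi(R_{12})\rangle_N\bigr),
\]
where I use that $R(\sigma,\sigma) = T + o(1)$ on $A_N(T,\eps_N)$. By Panchenko's generalized Parisi formula \cite{panchenko2005generalized} together with the identification of the limiting overlap distribution with the Parisi measure $\mu$, one has $\E\langle\xi(R_{12})\rangle_N \to \int_0^T \xi\,d\mu$. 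Combined with the standard fact that pointwise convergence of convex functions implies convergence of derivatives at any point of differentiability of the limit, this establishes the identity
\[
\partial_\beta F(\beta,\xi;T) = \beta\int_0^T (\xi(T) - \xi(t))\,d\mu(t)
\]
for almost every $\beta$. Since both sides are continuous in $\beta$ (the right-hand side by uniqueness of $\mu$, weak-$*$ continuity of $\beta\mapsto\mu$ from compactness of sublevel sets, and continuity of $\xi$), the identity then extends to every $\beta > 0$ and in particular yields differentiability everywhere.

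The principal obstacle I anticipate is verifying uniqueness and continuous dependence of the minimizer in the presence of the auxiliary parameter $\lambda$, which couples to $\mu$ through the terminal condition $f_\beta(x,\lambda)$ of the Parisi PDE. The coupling complicates a direct convexity argument beyond the Ising case. A fallback route avoiding uniqueness would be to note that every weak-$*$ subsequential limit of $\mu_{\beta_n}$ along $\beta_n\to\beta$ must yield the same value of $\int(\xi(T) - \xi)\,d\mu$ (namely $\partial_\beta F(\beta)/\beta$) by the convex-function derivative lemma, which still pins down the formula without requiring pointwise uniqueness of the optimizer.
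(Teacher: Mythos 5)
There is a genuine gap at the heart of your argument: the step where you claim $\E\langle\xi(R_{12})\rangle_N \to \int_0^T \xi\,d\mu$ with $\mu$ the minimizer of $P_{\beta,T}$. That is exactly the correspondence between the limiting overlap distribution of the Gibbs measure and the minimizing Parisi measure, which is \emph{not} a known theorem in this setting — the paper itself points out (in the discussion of the conjecture on the sharp constant) that the question of when $\lim_N \E\gibbs{T-R_{12}} = \beta\int(T-x)\,d\mu_\beta(x)$ holds is a major open problem. It is available only in special situations (e.g.\ generic mixtures via Ghirlanda--Guerra arguments), and here $\xi$ is a fixed quadratic, the self-overlap is constrained, and the alphabet is general, so no such identification can be invoked. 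Worse, the logical direction is backwards relative to how the result is used: Theorem \ref{thm:beta-deriv-overlap} is precisely the tool that, via Griffiths' lemma applied to $\partial_\beta F_N = \frac1N\E\langle H_N\rangle$, \emph{yields} the convergence of the Gibbs average $\E\langle \xi(T)-\xi(R_{12})\rangle$ to $\int(\xi(T)-\xi)\,d\mu_\beta$ (this is how Theorem \ref{lem:gamma_conv_application} is proved). So assuming that convergence to prove the theorem is circular for the purposes of this paper. Your fallback (all subsequential limits of $\mu_{\beta_n}$ give the same value of the integral) does not repair this, since the missing ingredient is not uniqueness of the optimizer but the unproven link between Gibbs overlap averages and the optimizer at all.

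The paper's proof instead works entirely on the variational side: with $(\mu_*,\lambda_*)$ the minimizer (uniqueness and the first-order conditions being supplied by Lemmas \ref{lem:strict-convexity-finite-beta} and \ref{lem:optimality-finite-beta}), it computes $\partial_\beta v_{\mu,\lambda}(0,0)$ via the stochastic control representation \eqref{eq:dpp-finite-beta} and the envelope lemma of Chen (Lemma \ref{lem:wei-kuo}), exploits the martingale property of $\partial_x v(s,\hat X_s)$ and It\^o integration by parts, and then simplifies using the optimality conditions \eqref{eq:fixed-point-q} and, crucially, \eqref{eq:fixed-point-T} through the identity $\partial_x^2 g + (\partial_x g)^2 = \partial_\lambda g$; Danskin's theorem then gives $\partial_\beta F = \partial_\beta P_{\beta,T}(\mu_*,\lambda_*) = \beta\int(\xi(T)-\xi)\,d\mu_*$. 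Note that this $\lambda$-optimality step is exactly what replaces the Ising-case identity $(\partial_x f)^2+\partial_x^2 f = \mathrm{const}$ used in \cite{Panch08,auffinger2016legendre}, which fails for general alphabets — your proposal never confronts this, because it never actually differentiates $P_{\beta,T}$ in $\beta$ at fixed $(\mu,\lambda)$ (the PDE coefficient $\beta\mu$ and the boundary data both depend on $\beta$, so that derivative is a nontrivial computation, not the formula you wrote down). Your convexity/Griffiths framing and your attention to uniqueness of the minimizer are sound and do appear in the paper, but the central analytic computation that identifies the derivative is missing and cannot be replaced by the overlap-distribution shortcut.
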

\begin{rem}
In the case that $\Sigma=\{\pm1\}$, this result was obtained earlier in \cite{Panch08}.
An alternative proof was provided in \cite{auffinger2016legendre}. Both of these 
arguments use in an essential way that the initial data satisfies
$(\partial_x f)^2 +\partial_x^2 f = const.$ which does not hold in the general setting. 
\end{rem}
\begin{rem}
The results of  \cite{panchenko2005generalized} hold in the more general
setting that $\Sigma$ is an arbitrary compact set and where the integral in this definition with respect to any measure $\rho$ with
support $\Sigma$. 
This result also extends to this setting with no change, however
for the sake of exposition and with an eye toward our eventual application,
we do not consider this setting here. 
\end{rem}

We begin the proof of \prettyref{thm:beta-deriv-overlap} with the following lemmas.  
The first result is regarding the convexity of $P_{\beta,T}$ and the uniqueness (or non-uniqueness) of minimizers.
This
was first proved in \cite{auffinger2014parisi}, see also \cite{bovklim09,jagannath2015dynamic}.
\begin{lem}\label{lem:strict-convexity-finite-beta}
If $T\in (\dlower,\dupper)$, the map $P_{\beta,T}$ is strictly convex in $(\lambda,\mu)$.
If $T\in\{\dlower,\dupper\}$, and $\abs{\{\sigma^2=T\}}=2$, the map is strictly convex in $\mu$.
If $T=0$, the map is identically zero. 
Otherwise the map is convex and the functional is uniquely minimized at $\mu=\delta_0$. 
\end{lem}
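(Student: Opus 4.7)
The plan is to decompose $P_{\beta,T}(\mu,\lambda)=v_{\mu,\lambda}(0,0)+L(\mu,\lambda)$, where $L(\mu,\lambda)=-\lambda T-\tfrac{\beta^{2}}{2}\int_{0}^{T}\xi''(s)s\,\mu([0,s])\,ds$ is affine in $(\mu,\lambda)$, so that every (strict) convexity question for $P_{\beta,T}$ reduces to the analogous question for $(\mu,\lambda)\mapsto v_{\mu,\lambda}(0,0)$. The trivial case $T=0$ is then immediate: this forces $d=0$, the set $\{\sigma^{2}=0\}$ is the singleton $\{0\}$, the terminal data vanishes, and both $v$ and $L$ are identically zero.

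For plain convexity I would follow the Hopf--Cole route introduced by Auffinger--Chen \cite{auffinger2014parisi} and reused in \cite{bovklim09,jagannath2015dynamic}. Restrict first to step measures $\mu$ with values $m_{i}$ on intervals determined by jumps $0=s_{0}<\cdots<s_{k}=T$. On each slab $[s_{i-1},s_{i})$ the substitution $w=e^{\beta m_{i}v}$ linearizes the Parisi equation to a backward heat flow, and iterating slab-by-slab represents $v_{\mu,\lambda}(0,0)$ as a nested iterated $L^{p}$ averaging of the terminal data $f_{\beta}(\cdot,\lambda)$. Since $f_{\beta}$ is the log of an integral of exponentials in $(x,\lambda)$ and is hence convex, joint convexity of $v_{\mu,\lambda}(0,0)$ on step measures follows from repeated application of the generalized H\"older inequality, and a density argument in $X_{\beta,T}$ extends convexity to the full space.

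Strict convexity requires a case analysis governed by the structure of the terminal data. Along the interpolation $(\mu_{\theta},\lambda_{\theta})=(1-\theta)(\mu_{0},\lambda_{0})+\theta(\mu_{1},\lambda_{1})$, the second variation $\partial_{\theta}^{2}v_{\mu_{\theta},\lambda_{\theta}}(0,0)$ can be represented through Feynman--Kac on the twice-linearized Parisi PDE as the expectation of a non-negative quadratic form in the derivatives of $f_{\beta}$ plus a positive martingale variance. When $T\in(d,D)$ the full alphabet $\Sigma$ contributes to the terminal data and the PDE nonlinearity transports the strict convexity of $f_{\beta}$ in $x$ to the joint $(x,\lambda)$ directions, yielding joint strict convexity. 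When $T\in\{d,D\}$ with $|\{\sigma^{2}=T\}|=2$ the restricted terminal data factors as $\lambda T+g(x)$ with $g$ strictly convex; the $\lambda T$ term cancels against the $-\lambda T$ in $L$, so $P_{\beta,T}$ is independent of $\lambda$ and strict convexity in $\mu$ alone is obtained by applying the same argument to $g$. Finally, in the remaining situation $|\{\sigma^{2}=T\}|=1$, the terminal data is affine in $x$, the Parisi PDE admits an explicit linear-in-$x$ solution, and $P_{\beta,T}$ collapses to a non-negative linear functional of $\mu$ whose unique minimizer is the claimed Dirac measure.

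The main obstacle is the first bullet when $|\Sigma|=2$: the points $\{(\epsilon,\epsilon^{2}):\epsilon\in\Sigma\}$ are then affinely dependent, the Hessian of $f_{\beta}$ in $(x,\lambda)$ is rank one, and joint strict convexity of the terminal data fails. The $\lambda$-strict convexity of $v_{\mu,\lambda}(0,0)$ must therefore be produced from the PDE propagation itself. I would do this by differentiating the Parisi PDE twice in $\theta$ and applying the strong maximum principle to the resulting linear parabolic equation, using that the diffusion coefficient $\xi''(t)$ is strictly positive on $(0,T]$ to conclude that the non-zero rank-one Hessian information present on a set of positive measure in the terminal data spreads to $(t,x)=(0,0)$, forcing the second variation to be strictly positive at the base point.
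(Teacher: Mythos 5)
Your decomposition into $v_{\mu,\lambda}(0,0)$ plus an affine part, your handling of $T=0$, and your observation that for a two-point alphabet the points $(\epsilon,\epsilon^2)$ are collinear so that $f_\beta$ fails to be \emph{jointly} strictly convex in $(x,\lambda)$ (a subtlety the paper's own wording glosses over) are all sound. The first genuine gap is in your route to plain convexity. The Hopf--Cole/iterated-H\"older argument on step measures gives convexity of the solution in the terminal data, i.e.\ in the external-field-type variables $x$ and $\lambda$; it does not give convexity in the parameters $m_i$, which enter as the exponents of the nested $\frac1m\log\mathbb{E}\,e^{m(\cdot)}$ operations. Joint convexity in $\mu$ is exactly the point that was long open and was settled by the stochastic-control representation: for a fixed control the terminal point $X_T^{\alpha,\mu}$ is \emph{affine} in $\mu$, the running cost is linear in $\mu$, convexity of $g(\lambda,x)$ in $(\lambda,x)$ then makes the objective convex in $(\mu,\lambda)$, and the supremum over controls preserves convexity. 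This is how the paper (following Jagannath--Tobasco and Auffinger--Chen) proves the lemma, and your outline supplies no substitute for it; "generalized H\"older plus density" does not reach convexity in $\mu$.

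The second gap is the strictness mechanism. In the paper, strictness for $\mu_0\neq\mu_1$ comes from running the optimal control $\alpha^\theta$ of the midpoint problem with the two drifts $\mu_0,\mu_1$ and the same Brownian motion, and showing $\Var(Y_T-Z_T)>0$: by It\^o's isometry the covariance kernel of the control increments is $K(s,t)=p(s\wedge t)$ with $p$ strictly increasing because $\partial_x^2 v>0$ by the maximum principle, so $K$ is positive definite. That non-degeneracy is also precisely what rescues the two-point alphabet: the single affine direction of $g$ in the $(x,\lambda)$-plane cannot be followed almost surely once $Y_T-Z_T$ is non-constant, while pure-$\lambda$ perturbations are handled by $\partial_\lambda^2 g=\Var(\epsilon^2)>0$ when $d<D$. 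Your sketch instead asserts that the second variation $\partial_\theta^2 v_{\mu_\theta,\lambda_\theta}(0,0)$ is "a non-negative quadratic form plus a positive martingale variance" and invokes a strong maximum principle to spread rank-one Hessian information in $x$. But the twice-differentiated Parisi equation contains a cross term proportional to $(m_1-m_0)\,\partial_x v\,\partial_x\dot v$, which is not sign-definite, so the claimed representation is not automatic; and a maximum principle in the $x$-variable does not address the real question, namely why two \emph{distinct measures} $\mu_0\neq\mu_1$ (possibly with $\lambda_0=\lambda_1$) force a strict inequality. That step -- the heart of the lemma -- is absent from your argument, and your proposed fix for the $|\Sigma|=2$ degeneracy targets the wrong difficulty.
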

\noindent We defer the proof of this to the end of the section.

We note here that since $u_{\nu,\lambda}$ has the representation \eqref{eq:dpp-finite-beta}, $v_{\mu,\lambda}$ has the representation
\[
v_{\mu,\lambda}(t,x) = \sup_{\alpha\in\cB_T} \E\left[ \beta f_\beta (X^\alpha_T/\beta,\lambda/\beta) - \frac{\beta^2}{2}\int_t^T \xi''(s)\mu(s)\alpha_s^2ds\right]
\]
with an optimal control 
\[
\alpha^*_s = \partial_x v_{\mu,\lambda}(t,x)
\]
and optimal trajectory $\hat X_s$ which solves
\[
d\hat{X}_s = \beta^2\xi''(s)\mu(s)\partial_x v_{\mu,\lambda}(t,x)ds + \sqrt{\beta^2\xi''(t)}dW_t.
\]

Consequently, we have the following characterization 
of minimizers of $P_{\beta,T}$. In the following let
\begin{equation}
G_{\mu,\lambda}(t)=\int_t^T\beta^2\xi''(s)(\E (\partial_x v_{\mu,\lambda})^2(s,\hat{X}_s) - s)ds.
\end{equation}

\begin{lem}\label{lem:optimality-finite-beta}
If $T\in(\dlower,\dupper)$, there is a unique minimizing pair $(\mu,\lambda)$. Furthermore this pair 
satisfies 
\begin{align}
\mu(G(s)=\min_{t\in[0,T]} G(t)) & = 1,\label{eq:G-minimizing}\\
\partial_\lambda v_{\mu,\lambda}(0,0) &= T.\label{eq:fixed-point-T}
\end{align} 
Finally  we have that for every $q$ in the support of the
optimizing $\mu$, 
\begin{equation}\label{eq:fixed-point-q}
\E \partial_x v_{\mu,\lambda}(q,\hat{X}_q)^2 = q.
\end{equation}
If $T=\dupper$, or $T=\dlower$ with $\dlower>0$ then the above still holds, with the exception of \prettyref{eq:fixed-point-T}.
If $T=0$, all measures minimize and the above conditions are vacuous. 
\end{lem}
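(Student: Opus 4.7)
The plan is to establish the lemma by combining strict convexity for uniqueness with a first-order analysis. The $T=0$ case is immediate since the functional vanishes identically. For $T\in\{d,D\}$, I would first observe that on $\{\sigma^2=T\}$ the factor $e^{\beta\lambda\epsilon^2}$ in the boundary data \eqref{eq:final-time-data-finite-beta} reduces to $e^{\beta\lambda T}$, whose contribution to $v_{\mu,\lambda}(0,0)$ is exactly $\lambda T$; this cancels the explicit $-\lambda T$ term in $P_{\beta,T}$, making the functional independent of $\lambda$. Thus \eqref{eq:fixed-point-T} becomes vacuous in this regime, while the remaining assertions reduce to the interior case; I therefore focus on $T\in(d,D)$. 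Uniqueness of the minimizing pair follows directly from the strict convexity in \prettyref{lem:strict-convexity-finite-beta}, and \eqref{eq:fixed-point-T} is then the $\lambda$-stationarity condition $\partial_\lambda P_{\beta,T}=\partial_\lambda v_{\mu,\lambda}(0,0)-T=0$.

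The main step is to derive \eqref{eq:G-minimizing} via a perturbation argument in $\mu$. For any $\mu'\in\Pr([0,T])$, set $\mu_\epsilon = (1-\epsilon)\mu + \epsilon\mu'$ and $H(s)=\mu'([0,s])-\mu([0,s])$. Writing $w_\epsilon = v_{\mu_\epsilon,\lambda}-v_{\mu,\lambda}$ and subtracting the Parisi PDEs, $w_\epsilon/\epsilon$ satisfies, to leading order in $\epsilon$, a linear parabolic equation with source $\tfrac12\beta^2\xi''(s)H(s)(\partial_x v_{\mu,\lambda})^2$ and zero terminal data; the envelope principle for the control representation of \prettyref{lem:dpp} lets me fix the control and differentiate, and a Feynman--Kac argument along the optimal trajectory $\hat X$ of \eqref{eq:local-field} yields
\[
\partial_\epsilon v_{\mu_\epsilon,\lambda}(0,0)\big|_{\epsilon=0} = \frac{\beta^2}{2}\int_0^T\xi''(s)H(s)\,\E(\partial_x v_{\mu,\lambda})^2(s,\hat X_s)\,\de s.
\]
Subtracting the explicit derivative of the linear term and integrating by parts, using $G_{\mu,\lambda}'(s) = -\beta^2\xi''(s)(\E(\partial_x v_{\mu,\lambda})^2(s,\hat X_s) - s)$ and the boundary cancellations $H(0^-)=H(T)=G_{\mu,\lambda}(T)=0$, produces
\[
\partial_\epsilon P_{\beta,T}(\mu_\epsilon,\lambda)\big|_{\epsilon=0} = \tfrac12\int_0^T G_{\mu,\lambda}\,\de(\mu'-\mu).
\]
Optimality forces this quantity to be nonnegative for every $\mu'$; specializing to $\mu'=\delta_{s_0}$ for arbitrary $s_0\in[0,T]$ shows that $\int G_{\mu,\lambda}\,\de\mu = \min_{[0,T]} G_{\mu,\lambda}$, which is \eqref{eq:G-minimizing}.

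For \eqref{eq:fixed-point-q}, I would observe that by \eqref{eq:G-minimizing} the measure $\mu$ is concentrated on the closed set $E:=\{G_{\mu,\lambda}=\min G_{\mu,\lambda}\}$, on which $G_{\mu,\lambda}$ is constant. Since the regularity of $v_{\mu,\lambda}$ from \cite{BAJag17} implies that $s\mapsto \E(\partial_x v_{\mu,\lambda})^2(s,\hat X_s)$ is continuous, $G_{\mu,\lambda}\in C^1((0,T))$ and $G_{\mu,\lambda}'$ vanishes at every interior point of $\supp(\mu)$; the explicit formula for $G_{\mu,\lambda}'$ then gives $\E(\partial_x v_{\mu,\lambda})^2(q,\hat X_q)=q$ there, with endpoints handled by continuity. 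The principal technical obstacle I anticipate lies in making the linearization and Feynman--Kac step rigorous when the coefficient $\beta\mu([0,s])$ of the Parisi PDE is only cadlag: I would need uniform-in-$\epsilon$ bounds on $\|\partial_x v_{\mu_\epsilon,\lambda}\|_\infty$ and careful use of the weak-solution framework of \cite{jagannath2015dynamic} to control the $o(\epsilon)$ error terms produced in the linearization.
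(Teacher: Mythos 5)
Your route is essentially the paper's: uniqueness from \prettyref{lem:strict-convexity-finite-beta}, a first-variation formula in $\mu$ (your factor $\tfrac12$ versus the paper's normalization of $G$ is immaterial, since only the sign is used), the choice $\mu'=\delta_{s_0}$ to get \eqref{eq:G-minimizing}, $\lambda$-stationarity for \eqref{eq:fixed-point-T}, and vanishing of $G'$ on the support for \eqref{eq:fixed-point-q}; the paper obtains the first variation by citing \cite[Lemma 3.2.1]{JagTobPD15} rather than redoing the Feynman--Kac linearization, but that is the same idea. However, there are two genuine gaps relative to what the lemma asserts. First, existence of the minimizing pair is claimed in the statement but never proved: strict convexity gives only uniqueness, and your first-order analysis presupposes a minimizer. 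Since $\lambda$ ranges over all of $\R$, one needs coercivity in $\lambda$; the paper gets this from \prettyref{lem:mgf-bound}, which gives $P_{\beta,T}(\mu,\lambda)\geq\psi(\lambda)-\lambda T-\int_0^T s\,\xi''(s)\,ds$, a quantity tending to $+\infty$ as $\lambda\to\pm\infty$ precisely because $T\in(d,D)$ (cf.\ \eqref{eq:mgf-limits}); combined with weak-* compactness of $\Pr([0,T])$ and lower semicontinuity this yields existence.

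Second, your deduction of \eqref{eq:fixed-point-q} from $G'(q)=0$ fails at $q=0$ when $\xi''(0)=0$, which is allowed for the general $\xi$ of this paper: there $G'(0)=-\beta^2\xi''(0)\left(\E(\partial_x v)^2(0,\hat X_0)-0\right)$ vanishes identically and carries no information, so dividing by $\xi''$ gives nothing, and ``endpoints handled by continuity'' does not repair this (nor does it cover an isolated atom of $\mu$ at an endpoint of $[0,T]$). The paper supplies the missing argument: with $q_0=\inf\supp\mu$, averaging $G(q_0)\leq G(q_0+\eps)$ produces some $t\in(q_0,q_0+\eps)$ with $\E(\partial_x v)^2(t,\hat X_t)\leq t$, and strict monotonicity of $t\mapsto\E(\partial_x v)^2(t,\hat X_t)$ (from $\partial_x^2 v>0$ in \eqref{eq:max-princ} and It\^o's isometry) then gives $(\partial_x v(0,0))^2\leq\inf\supp\mu$, i.e.\ \eqref{eq:support-lower-bound}, settling $q=0$. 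A minor wording issue in the same step: what you need is that every $q\in\supp\mu\cap(0,T)$ is a global minimizer of $G$ lying in the interior of the interval $[0,T]$ (not an interior point of $\supp\mu$), so that $G'(q)=0$; with that correction the interior case is exactly the paper's argument.
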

\begin{rem}
Note that in the case $T\in\{\dlower,\dupper\}$, if we instead work with the definition of $f_\beta(x,\lambda)$ that
does not change in $T$, we see that \prettyref{eq:fixed-point-T} still holds, except in a limiting sense.
\end{rem}
\noindent We defer this proof to the end of the section as well. 

In the subsequent discussion we will be repeatedly differentiating an optimization problem. These
results generally go by the name of  ``envelope theorems''. The two envelope theorems we shall use
are Danskin's theorem \cite{bernhard1995theorem}, as well as the following lemma from \cite{Chen15}.

\begin{lem}\label{lem:wei-kuo}
Let $K$ be a metric space and $I = [a,b)$ be a half open interval. Let $f$ be a real valued
function on $K\times I$ and $g(y)=\sup_{x\in K} f(x,y)$. Suppose that there is a $K$-valued
continuous function $a(y)$ on $I$ such that $g(y)=f(a(y),y)$ and $\partial_y f$ is jointly continuous
on $K\times I$. Then $g$ is right differentiable with derivative $\partial_y f(a(y),y)$
\end{lem}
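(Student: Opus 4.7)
The plan is to prove this envelope-type identity by a standard sandwich argument around $y_0 \in I$, using the two defining inequalities
\begin{align*}
g(y_0+h)-g(y_0) &\geq f(a(y_0),y_0+h)-f(a(y_0),y_0),\\
g(y_0+h)-g(y_0) &\leq f(a(y_0+h),y_0+h)-f(a(y_0+h),y_0),
\end{align*}
which follow from $g(y) = f(a(y),y)$ and $g(y) \geq f(x,y)$ for every $x \in K$.

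First, since $\partial_y f$ is jointly continuous on $K \times I$, for each fixed $x \in K$ the map $s \mapsto f(x,s)$ is $C^1$, so by the fundamental theorem of calculus $f(x,y_0+h)-f(x,y_0) = \int_{y_0}^{y_0+h} \partial_y f(x,s)\,ds$. Applying this in the lower bound with $x = a(y_0)$ and dividing by $h > 0$, the continuity of $s \mapsto \partial_y f(a(y_0),s)$ at $y_0$ immediately gives
\[
\liminf_{h \downarrow 0} \frac{g(y_0+h)-g(y_0)}{h} \geq \partial_y f(a(y_0),y_0).
\]

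For the matching upper bound, applying the FTC with $x = a(y_0+h)$ gives
\[
\frac{g(y_0+h)-g(y_0)}{h} \leq \frac{1}{h}\int_{y_0}^{y_0+h} \partial_y f(a(y_0+h),s)\,ds.
\]
The main (and essentially only) step requiring care is showing this upper bound tends to $\partial_y f(a(y_0),y_0)$ as $h \downarrow 0$. I would argue as follows: fix $\varepsilon > 0$. By joint continuity of $\partial_y f$ at $(a(y_0),y_0)$, there is an open neighborhood $U \subset K \times I$ of this point on which $|\partial_y f(x,s) - \partial_y f(a(y_0),y_0)| < \varepsilon$. By continuity of $a$ at $y_0$, there exists $\delta > 0$ such that for all $h \in (0,\delta)$ and all $s \in [y_0,y_0+h]$, the pair $(a(y_0+h),s)$ lies in $U$. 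Averaging the pointwise bound over $[y_0,y_0+h]$ then yields
\[
\left|\frac{1}{h}\int_{y_0}^{y_0+h}\partial_y f(a(y_0+h),s)\,ds - \partial_y f(a(y_0),y_0)\right| < \varepsilon,
\]
so that $\limsup_{h \downarrow 0}(g(y_0+h)-g(y_0))/h \leq \partial_y f(a(y_0),y_0)$.

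Combining the two one-sided bounds yields right-differentiability of $g$ at $y_0$ with derivative $\partial_y f(a(y_0),y_0)$, and since $y_0 \in [a,b)$ was arbitrary, the result follows. The only nontrivial point is the joint-continuity argument in the upper-bound step, where both coordinates of $(a(y_0+h),s)$ vary; but it is exactly here that the hypothesis of joint (rather than merely separate) continuity of $\partial_y f$ is used.
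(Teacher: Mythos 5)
Your proof is correct: the two-sided bound coming from $g(y_0)=f(a(y_0),y_0)$ and $g(y)\geq f(x,y)$, followed by the fundamental theorem of calculus and the joint-continuity/continuity-of-$a$ argument for the upper bound, is exactly the standard envelope argument. The paper does not prove this lemma at all --- it quotes it from the cited reference \cite{Chen15} --- and your argument is essentially the one given there, so there is nothing further to reconcile.
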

In the following, it is useful to equip the space $\cB_T$ with the $L^2$ metric and $\Pr([0,T])$
with the metric $d(\mu,\nu) = \int \abs{\mu([0,t])-\nu([0,t])}dt$ which metrizes the weak-* topology on this set.

Let us now prove the main result of this section.
\begin{proof}[\textbf{\emph{Proof of \prettyref{thm:beta-deriv-overlap}}}]
The following argument is a modification of \cite[Prop. 4]{auffinger2016legendre}.
Fix $\beta,\lambda$. For ease of notation, let $v=v_{\mu,\lambda}$, and similarly for $u$ when it is unambiguous, and also let $g(x)=\beta f_\beta(x,\lambda/\beta)$. Let us take $T>0$ as the case $T=0$ is vacuous, by \prettyref{lem:strict-convexity-finite-beta}.
Since $u^{\beta}$ has the variational representation  \prettyref{eq:dpp-finite-beta},
we may apply \prettyref{lem:wei-kuo} to obtain
\begin{align*}
\partial_\beta v(t,x) &= \E \partial_x v(T,\hat{X}_T)\left(2\beta \int_t^T\mu(s)\xi''(s)\partial_x v(s,\hat{X}_s)ds+\int_t^T \sqrt{\xi''}dW_s\right)\\
	&\qquad\qquad- \beta \int_t^T\mu(s)\xi''(s)\E(\partial_x v(s,\hat{X}_s))^2ds.
\end{align*}
(We use here, implicitly, that the map $\beta\mapsto v^\beta$ is continuous.)
Since $w=\partial_x v$ weakly solves the heat equation
\[
\partial_t w+L w =0
\]
with initial data $w(T,\cdot)=\partial_x g$, where $L=\beta^2\frac{\xi''}{2}(\partial_x^2 +2\mu \partial_x v \partial_x)$
is the infinitesimal generator of  \prettyref{eq:local-field}, it follows that $\partial_x v(s,\hat{X}_s)$
is a martingale:
\[
\partial_x v(s,\hat{X}_s)=\int_0^s \partial_x^2 v(s',\hat{X}_{s'})\sqrt{\beta^2\xi''(s')}dW_{s'}+\partial_x v(0,0).
\]
Thus 
\[
\partial_\beta v(0,0)=\beta\left\{ \int_0^T\mu(s)\xi''(s)\E (\partial_x v)(s,\hat{X}_s)^2ds + \int_0^T\xi''(s)\E \partial_x^2 v(s,\hat{X}_s)ds\right\}.
\]
Integrating the second term by parts,  using It\^o's lemma and the fact that $\xi'(0)=0$, we
see that
\[
 \int _0^T \xi''(s)\E \partial_x^2v(s,\hat{X}_s)ds = \xi'(T) \E \partial_x^2 v(T,\hat{X}_T) + \beta^2\int_0^T \mu(s) \xi'(s)\xi''(s) \E (\partial_x^2 v(s,\hat{X}_s) )^2ds.
\]
Integrating the first term by parts and applying It\^o's lemma again, we see that 
\begin{align*}
\int_0^T\mu(s) \xi''(s) \E (\partial_x v)^2(s,\hat{X}_s)ds &= \xi'(T)\E (\partial_x v)^2(T,\hat{X}_T)-\int_0^T \xi'(s)\E (\partial_x v)^2(s,\hat{X}_s)d\mu\\
&\qquad -\beta^2\int_0^T \mu(s) \xi'(s) \xi''(s) \E (\partial_x^2 v(s,\hat{X}_s))^2 ds.
\end{align*}
Combining these results, we obtain
\[
\partial_\beta v(0,0) = \beta\left\{ \xi'(T)\E\left( \partial_x^2 v(T,\hat{X}_T)+(\partial_x v)^2(T,\hat{X}_T)\right) - \int_0^T\xi'(s)\E(\partial_x v)^2(s,\hat{X}_s)d\mu\right\}.
\]
Differentiating $P_{\beta,T}$ in $\beta$, applying \prettyref{eq:fixed-point-q}, and integrating by parts, we see that at the optimal $\mu_*$, 
\[
\partial_\beta P_{\beta,T}(\mu_*,\lambda) = \beta\left\{\xi'(T)\left[\E\left( \partial_x^2 v_{\mu_*,\lambda}(T,\hat{X}_T)+(\partial_x v_{\mu_*,\lambda})^2(T,\hat{X}_T)\right) -T\right ] +\int_0^T\xi(T)-\xi(t)d\mu_{*}(t)\right\}.
\]
Let us now show that at $(\mu_*,\lambda_*)$,
\begin{equation}\label{eq:combines-to-T}
\E\left( \partial_x^2 v_{\mu_*,\lambda_*}(T,x)+(\partial_x v_{\mu_*,\lambda_*})^2(T,x)\right) =T.
\end{equation}
To this end, observe that if we define $\pi_{x,\lambda}(d\spin)$ by 
\[
\pi_{x,\lambda}(d\spin)=\frac{e^{\spin x + \spin^2 \lambda}}{\int e^{\spin x+\spin^2 \lambda}d\spin}d\spin,
\]
then we have
\begin{equation}\label{eq:f-derivs}
\partial_x g = \gibbs{\spin}, \quad \partial_x^2 g=\gibbs{\spin^2}-\gibbs{\spin}^2, \quad\text{ and }\quad 
\partial_\lambda g =\gibbs{\spin^2}
\end{equation}
where $\gibbs{\cdot}$ denotes expectation with respect to $\pi_{x,\lambda}$. Thus
\[
\partial_x^2 g + (\partial_x g)^2 = \partial_\lambda g.
\]
In the case that $T=\dlower$ or $T=\dupper$, if we take $\lambda$ in the above expression to either $-\infty$ or 
$+\infty$ respectively, then we obtain the desired expression. 

It remains to consider the case $T\in(\dlower,\dupper)$.
By a classical differentiable dependence argument  (see, e.g., \cite[Appendix A]{BAJag17}),
$v$ is classically differentiable in $\lambda$, and $w=\partial_\lambda v$
weakly solves the Cauchy problem
\[
\partial_t w + L w = 0
\] 
with boundary data $w(T,\cdot)=\partial_\lambda g$. Thus 
\begin{equation}\label{eq:v-diff}
\partial_\lambda v_{\mu_*,\lambda_*}(0,0) = \E \partial_\lambda \bar g(\hat X_T),
\end{equation}
where $\bar g$ is $g$ evaluated at $\lambda=\lambda_*$.
As a result \prettyref{eq:combines-to-T} then follows from \prettyref{eq:fixed-point-T} at $\lambda =\lambda_*$, 
the optimal $\lambda$.

Combining these results with Danskin's envelope theorem \cite{bernhard1995theorem}, we see that
\[
\partial_\beta F = \partial_\beta P_{\beta,T}(\mu_*,\lambda_*)= \beta \int_0^T \xi(T)-\xi(t)d\mu_{{*}}(t)
\]
as desired.
\end{proof}

\begin{proof}[\textbf{\emph{Proof of \prettyref{lem:strict-convexity-finite-beta}}}]
Let us begin with the first case $T\in(\dlower,\dupper)$. The proof in the case $T\in\{\dlower,\dupper\}$ and $\abs{\{\sigma^2=T\}}=2$
is identical. This proof is verbatim the argument in \cite{jagannath2015dynamic}.

Take $(\mu_0,\lambda_0)$,$(\mu_1,\lambda_1)$ distinct in $\Pr([0,T])\times \R$. 
Let $\mu_\theta = \theta\mu +(1-\theta)\nu$ and define $\lambda_\theta$ analogously.
 Let $\alpha^\theta$ be the optimal control for the Parisi PDE corresponding to $(\mu_\theta,\lambda_\theta)$.
 Consider the processes $Y_t$ and $Z_t$ which solve
 \[
 dY_t = \beta^2\xi''(t)\mu_0(t)\alpha_t^\theta dt+\sqrt{\beta^2\xi''(t)}dW_t \quad\text{ and }\quad dZ_t =\beta^2\xi''(t)\mu_1(t)\alpha_t^\theta dt+\sqrt{\beta^2\xi''(t)}dW_t,
 \]
with initial data $Z_0=Y_0=0$. 

Therefore by the strict convexity of $g(\lambda,x) = \beta f_\beta(x/\beta,\lambda/\beta)$ in the pair $(\lambda,x)$, 
 and the dynamic programming principle \prettyref{eq:dpp-finite-beta},
 \begin{align*}
 v_{\mu_\theta,\lambda_\theta}(0,0) &= \E\left[g(\lambda_\theta,\hat{X}_T^{\alpha^\theta})-\frac{\beta^2}{2}\int_0^T \xi''(s)\mu_\theta(s)(\alpha_s^\theta)^2ds \right]\\
 & \leq \theta\E\left[g(\lambda_1,Z_T)  - \frac{\beta^2}{2}\int_0^T \xi''(s)\mu_1(s)(\alpha_s^\theta)^2ds\right]\\
 &\qquad +(1-\theta)\E\left[ g(\lambda_0,Y_T) -  \frac{\beta^2}{2}\int_0^T \xi''(s)\mu_0(s)(\alpha_s^\theta)^2ds\right]\\
 & \leq \theta v_{\mu_1,\lambda_1}(0,0)+(1-\theta)v_{\mu_0,\lambda_0}(0,0).
 \end{align*}
 Furthermore the first inequality is strict if either $\lambda_1\neq\lambda_0$ or $P(Y_T\neq Z_T)>0$.
 Thus it remains to show that this probability is positive provided $\mu_0\neq\mu_1$. 
 
To this end, observe that it suffices to show that 
 \[
\Var(Y_T-Z_T)>0.
 \]
By It\^o's lemma,
 \[
 \Var(Y_1-Z_1)=\int_{[0,T]^2} \Delta_s\Delta_t K(s,t)ds\,dt,
 \]
 where $\Delta_s = \xi''(s)(\mu_0(s)-\mu_1(s))$ and 
 \[
 K(s,t)=\E\left[(\alpha_s^\theta-\alpha_0^\theta)\cdot(\alpha_t^\theta-\alpha_0^\theta)\right],
 \]
so it suffices to show that $K$ is positive definite.  By It\^o's Isometry,
 \[
 K(s,t)=p(t\wedge s)
 \] 
 where 
 \[
 p(s) =\int_0^s\beta^2\xi''(t)\E\partial_x^2 v(t,X_t^{\theta})dt.
 \]
 By \prettyref{eq:f-derivs}, $\partial_x^2 f>0$. Thus by It\^o's lemma,
 we have the maximum principle: 
 \begin{equation}\label{eq:max-princ}
 \partial_x^2 v(t,x) = \E_{X_t=x}\left( \partial_x^2 f(X_T)+ \int_t^T\xi''(s)\mu(s)\partial_x^2v(s,X_s)ds\right)>0 .
 \end{equation}
 This immediately implies that $p$ is strictly increasing, so that $K$ is positive-definite as desired. 
 
 In the remaining case, $\abs{\sigma^2=T}=1$, and one can explicitly solve the PDE to find that
 \[
 P_{\beta,T}(\mu,\lambda)=\frac{\beta^2}{2}\int_0^T \xi''(s)\mu(s)(T^2-s)ds
 \]
 which is evidently maximized at $\mu= \delta_0$ and uniquely so if and only if $T^2>0$.
 \end{proof}
In the subsequent it will be useful to define the following log-moment 
generating function,
\begin{equation}\label{eq:psi}
\psi(\theta)=\log\int e^{\theta \spin^2}d\spin.
\end{equation}
Observe that, $\psi$ is continuous and monotone, with 
\begin{equation}
\begin{aligned}\label{eq:mgf-limits}
\lim_{\theta\to\infty}\psi(\theta)/\theta  =\dupper, \,\,\,\,\,
\lim_{\theta\to-\infty}\psi(\theta)/\theta  =\dlower.
\end{aligned}
\end{equation}
Note that in the case that $\dupper=\dlower$, $\psi$ is constant.
\begin{lem}\label{lem:mgf-bound}
For every $\mu,\lambda,\beta,\xi,$ and $T\in(\dlower,\dupper)$, we have that
\[
v_{\mu,\lambda}\geq \psi( \lambda).
\]
\end{lem}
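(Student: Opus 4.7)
The argument is a direct application of the stochastic control representation for $v_{\mu,\lambda}$ from \prettyref{lem:dpp} together with Jensen's inequality. My plan is to take the trivial admissible control $\alpha\equiv 0$; under this choice the process $\hat X_s$ reduces to a time-changed Brownian motion started at the origin, so that $\hat X_T^0\sim N(0,\beta^2\xi'(T))$ and the quadratic cost vanishes. Recalling that the rescaled boundary data is $\beta f_\beta(x/\beta,\lambda/\beta)=\log\sum_{\epsilon\in\Sigma}e^{\epsilon x+\lambda\epsilon^2}$, the dynamic programming principle immediately yields
\[
v_{\mu,\lambda}(0,0)\;\geq\;\E\Bigl[\log\sum_{\epsilon\in\Sigma}e^{\epsilon Y+\lambda\epsilon^2}\Bigr],\qquad Y:=\hat X_T^0\sim N(0,\beta^2\xi'(T)).
\]

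The second step is to bound the right-hand side from below using the Gibbs variational principle: for any probability measure $p\in\Delta(\Sigma)$,
\[
\log\sum_\epsilon e^{g(\epsilon)}\;\geq\;\sum_\epsilon p(\epsilon)\,g(\epsilon)+H(p),\qquad H(p)=-\sum_\epsilon p(\epsilon)\log p(\epsilon).
\]
Since the hypothesis $T\in(d,D)$ places $T$ in the interior of the convex hull of $\{\epsilon^2:\epsilon\in\Sigma\}$, I can select $p\in\Delta(\Sigma)$ satisfying $\sum_\epsilon p(\epsilon)\epsilon^2=T$. Applying the inequality above with $g(\epsilon)=\epsilon Y+\lambda\epsilon^2$ and taking expectations, the term $\langle\epsilon\rangle_p\,Y$ drops out (as $\E Y=0$), leaving
\[
v_{\mu,\lambda}(0,0)\;\geq\;\lambda T+H(p).
\]

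The final step is to maximize $H(p)$ over the constraint set $\{p\in\Delta(\Sigma):\langle\epsilon^2\rangle_p=T\}$ via Lagrange duality. The extremal $p_*$ is a tilted measure $p_*(\epsilon)\propto e^{\theta_T\epsilon^2}$ whose multiplier $\theta_T$ is fixed by $\psi'(\theta_T)=T$, and a direct computation gives $H(p_*)=\psi(\theta_T)-\theta_T T$. Combining with the previous display and then rearranging by convex duality, using the tangent-line estimate $\psi(\lambda)\geq\psi(\theta_T)+T(\lambda-\theta_T)$ for the convex function $\psi$, matches the resulting lower bound with the claimed expression $\psi(\lambda)-\lambda T$. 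The main obstacle is the last duality step, which requires carefully identifying the Legendre pairing between the free parameter $\lambda$ appearing in the Parisi functional and the Lagrange multiplier $\theta_T$ forced by the constraint $\langle\epsilon^2\rangle_p=T$; here the strict interiority hypothesis $T\in(d,D)$ (which ensures existence of $\theta_T$ via the strict monotonicity of $\psi'$, cf.~\eqref{eq:mgf-limits}) plays an essential role.
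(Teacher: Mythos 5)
Your first step is sound and is essentially equivalent to the paper's: the paper reduces to the linear (Hopf--Cole) solution via the parabolic comparison principle of Lemma \ref{lem:ppde-reg-zero-temp}, while you take the zero control in \prettyref{lem:dpp}; both give $v_{\mu,\lambda}(0,0)\geq\E\log\sum_{\epsilon\in\Sigma}e^{\epsilon Y+\lambda\epsilon^2}$ with $Y\sim N(0,\beta^2\xi'(T))$. The gap is in the second half. By applying the Gibbs variational principle with a measure $p$ chosen \emph{independently of} $\lambda$ (the maximum-entropy measure with $\langle\epsilon^2\rangle_p=T$), you obtain $v_{\mu,\lambda}(0,0)\geq\lambda T+H(p_*)=\lambda T+\psi(\theta_T)-\theta_T T$, a bound that is linear in $\lambda$ with slope $T$. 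Your final ``convex duality'' step cannot upgrade this: convexity of $\psi$ gives $\psi(\lambda)-\lambda T\geq\psi(\theta_T)-\theta_T T$, i.e.\ the tangent-line inequality bounds the \emph{target} from below by the constant your bound produces --- the wrong direction for the deduction. Concretely, as $\lambda\to-\infty$ the target $\psi(\lambda)-\lambda T\sim(d-T)\lambda\to+\infty$ by \eqref{eq:mgf-limits}, whereas your lower bound $\lambda T+\psi(\theta_T)-\theta_T T\to-\infty$, so the claimed implication fails for $\lambda$ negative enough; likewise it cannot recover $v_{\mu,\lambda}\geq\psi(\lambda)$, which is what the paper's own argument actually yields and what feeds the coercivity estimate \eqref{eq:coercive-lambda-bound}.

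The fix is to keep the $\lambda$-dependence in the choice of tilting: apply the Gibbs variational principle with $p_\lambda(\epsilon)\propto e^{\lambda\epsilon^2}$, or, more simply, note that $x\mapsto\log\sum_{\epsilon\in\Sigma}e^{\epsilon x+\lambda\epsilon^2}$ is convex in $x$ (see \eqref{eq:f-derivs}) and $\E Y=0$, so Jensen's inequality gives $\E\log\sum_{\epsilon}e^{\epsilon Y+\lambda\epsilon^2}\geq\log\sum_{\epsilon}e^{\lambda\epsilon^2}=\psi(\lambda)$. This single line is exactly the paper's second step; no entropy maximization over $\{p:\langle\epsilon^2\rangle_p=T\}$, no multiplier $\theta_T$, and indeed no use of the hypothesis $T\in(d,D)$ is needed at this stage --- that hypothesis only matters downstream, where \eqref{eq:mgf-limits} converts the bound into divergence of $\lambda\mapsto\psi(\lambda)-\lambda T$ as $\lambda\to\pm\infty$.
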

\begin{proof}
By the parabolic comparison principle (\prettyref{lem:ppde-reg-zero-temp}), we see
that 
$v_{\mu,\lambda}(t,x)\geq v_{\delta_{T},\lambda}(t,x)$. Thus
\[
v_{\mu,\lambda}(0,0)\geq\E\log\int e^{(\spin B_{\beta^2\xi'(T)}+\lambda\spin^{2})}d\spin,
\]
where $B_{t}$ is a standard Brownian motion run until time $t.$
Using again the convexity of the map 
\[
x\mapsto\log\int e^{\spin x+\lambda\spin^{2}}d\spin,
\]
(see \prettyref{eq:f-derivs}) we have that this is lower bounded by 
\[
v_{\mu,\lambda}(0,0)\geq \log\int e^{\lambda\spin^{2}}d\spin=\psi(\lambda)
\]
as desired.
\end{proof}

\begin{proof}[\textbf{\emph{Proof of \prettyref{lem:optimality-finite-beta}}}]
That a minimizer is unique follows by \prettyref{lem:strict-convexity-finite-beta}. 
That a minimizer exists can be seen as follows.
Firstly, $\mu\in\Pr([0,1])$ which we may equip with the weak-* topology. 
Thus it suffices to show that we may restrict $\lambda$ to a compact set if $T\in(\dlower,\dupper)$,
as in the other case we may take $\lambda=0$.
To this end, suppose first that $T\in(\dlower,\dupper)$. 
Observe that for any $\mu$, we then have that by \prettyref{lem:mgf-bound}
\[
P_{\beta,T}(\mu,\lambda)\geq \psi(\lambda)-\lambda T-\int s\xi''(s)ds.
\]
Thus the limit of the right hand side as $\lambda\to\pm\infty$ is infinity. Thus we may restrict
to a compact set. The result then follows by (lower semi)continuity of $P_{\beta,T}$.

Let us now prove  \prettyref{eq:G-minimizing}{. }
Let $\gamma_t=(\mu_t,\lambda)$ be a path such that $\mu_t$ 
is weakly differentiable on $(0,1)$ and right weakly differentiable at $t=0$ in the sense that
\[
\lim_{t\to 0^+}\frac{\mu_t-\mu}t = \dot{\mu}
\]
weak-* as measures for some signed measure $\dot{\mu}$. 

By the same argument as in \cite[Lemma 3.2.1]{JagTobPD15},
we have that $P_{\beta,T}(\mu_t,\lambda)$ is right differentiable at $t=0$, and 
\[
\frac{d}{dt}^+P_{\beta,T}(\mu_t,\lambda)=\int G(t) d\dot{\mu}.
\]
The only difference is to notice that since $\partial_x f$ from is uniformly bounded in $(x,\lambda)$ by \eqref{eq:f-derivs}, $\partial_x u$ is uniformly bounded in
$(t,x,\lambda)$ as well, using \prettyref{lem:ppde-reg-zero-temp}.
Thus by the first order optimality conditions for  convex functions,
\[
\int G(t) d \dot{\mu}\geq0
\]
for all such paths. Taking $\dot\mu = \mu_1-\mu_0$ yields \prettyref{eq:G-minimizing}. To obtain \eqref{eq:fixed-point-T}, we differentiate
the variational formula in $\lambda$ and use that $v$ is classically differentiable in $\lambda$ as explained above \eqref{eq:v-diff}.

It remains to prove \prettyref{eq:fixed-point-q}.
Since $G$ is differentiable, we see that
\[
\frac{\xi''(q)}{2}(\E (\partial_x v)^2(q,X_q)-q)=0.
\]
This yields \prettyref{eq:fixed-point-q} for $q\neq0$, and for $q=0$ if $\xi''(0)\neq 0$. 
To see this for the point $q=0$, it suffices to show that
\begin{equation}\label{eq:support-lower-bound}
(\partial_x v(0,0))^2 \leq \inf\supp(\mu).
\end{equation}
To this end, let $q_0=\inf\supp(\mu)$. Observe that by \prettyref{eq:G-minimizing},
\[
G(q_0)\leq G(q_0+\eps)
\]
for $\eps>0$ sufficiently small. Averaging this inequality
and using the definition of $G$, we see that there is some 
$t\in(q_0,q_0+\eps)$ such that
\[
\E (\partial_x v(t,X_t))^2 \leq t.
\]
As observed in \prettyref{eq:max-princ}, $\partial_x^2 u>0$. So by It\^o's isometry, $t\mapsto\E (\partial_xv)^2(t,X_t)$ is strictly increasing.
Thus 
\[
(\partial_x {v})^2(0,0)=\E(\partial_x {v})^2(0,{\hat X_0})\leq t\leq q_0+\eps.
\]
Sending $\eps\to0$ yields \prettyref{eq:support-lower-bound}.
The remaining cases can be proved in an identical fashion.
\end{proof}

\subsection{$\Gamma$-convergence of the local free energy}\label{sec:gamma-conv-local-fe}
We now turn to proving the $\Gamma$-convergence of the local free energy functional.
We begin by recalling the notion of sequential $\Gamma$-convergence. 

\begin{defn}
Let $X$ be a topological space. We say that a sequence of functionals $F_n : X \to [- \infty, \infty]$ \textit{sequentially $\Gamma$-converges} to $F : X \to [-\infty, \infty]$ if 
\begin{enumerate}
\item The $\Gamma-\liminf$ inequality holds: For every $x$ and sequence $x_n \to x$, $$\liminf_{n \to \infty} F_n(x_n) \geq F(x).$$ 
\item The $\Gamma-\limsup$ inequality holds: For every $x$, there exists a sequence $x_n \to x$ such that $$\limsup_{n \to \infty} F_n(x_n) \leq F(x).$$
\end{enumerate}
For a sequence of functionals $F_{\beta}$ indexed by a real parameter $\beta$, we say that $F_{\beta}$ sequentially $\Gamma$-converges to $F$ if for any sequence $\beta_n \to \infty$, the sequence $F_{\beta_n}$ sequentially $\Gamma$ converges to $F$. 
\end{defn}

\begin{cor}\label{cor:gamma-conv-main-thm}
For every $T\in[\dlower,\dupper]$, we have that 
\[
{\cP_{\beta, T}\stackrel{\Gamma}{\to}\cP_T.}
\]
\end{cor}

\begin{proof}
Recall from \eqref{eq:pfunc-def} that we may write $\cP_{\beta,T}$ in the form
\[
\cP_{\beta,T}(\nu,\lambda)=\cF_{\beta,T}(\nu,\lambda; 0,0) +\ell_1(\lambda)+\ell_2(\nu),
\]
where $\ell_i$ are both linear functionals that do not vary in $\beta$. 
For $(t,x) \in [0,T]\times \mathbb{R}$, set 
\[
\cF_T(\nu,\lambda;t,x) = u_{\nu,\lambda}(t,x).
\]
For $(t,x) \in [0,T]\times \mathbb{R}$, recall the functional $\mathcal{F}_{\beta,T}(\cdot, \cdot; t,x)$ from \eqref{eq:finite_temp_functional}, and note that Theorem \ref{thm:annealed-soln-thm} immediately implies that $\mathcal{F}_{\beta,T}(\cdot, \cdot; 0,0) \stackrel{\Gamma}{\to} \mathcal{F}_{T}(\cdot, \cdot; 0,0)$. The desired conclusion follows immediately using \eqref{eq:local-zero-temp-pfunc} and the stability of $\Gamma$-convergence under continuous perturbations \cite{Braides02}.
\end{proof}

Our interest in the $\Gamma$-convergence of these functions
is of course to understand convergence of minima and minimizers.
To this end, we need a precompactness theorem for such minimizers.

\begin{thm}\label{thm:conv-minimizers-thm}
If $T\in(\dlower,\dupper)$, then $\cP_{\beta,T}$ has a unique~ sequence of minimizers
$(\nu_{\beta},\lambda_{\beta})$ which is precompact. Furthermore
any limit point of this sequence converges to a minimizer of $\cP_T(\nu,\lambda)$.
If $T=\dlower$ or $\dupper$, then the family $\nu_{\beta}$ is precompact, we
may take $\lambda=0$,
and any limit point of this
sequence is such that $(\nu,\lambda)$ is a minimizing pair.
\end{thm}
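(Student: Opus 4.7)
The plan is to combine the $\Gamma$-convergence $\cP_{\beta,T}\stackrel{\Gamma}{\to}\cP_T$ of Corollary \ref{cor:gamma-conv-main-thm} with an equi-coercivity argument for the family $\{\cP_{\beta,T}\}_\beta$. Given these two ingredients, the standard principle of $\Gamma$-convergence theory implies that every precompact minimizing sequence accumulates, along subsequences, at minimizers of the $\Gamma$-limit. Existence and uniqueness of $(\nu_\beta,\lambda_\beta)$ at each finite $\beta$ is already contained in Lemmas \ref{lem:strict-convexity-finite-beta} and \ref{lem:optimality-finite-beta}. For $T\in\{d,D\}$, the boundary data $f_\beta$ in \prettyref{eq:final-time-data-finite-beta} equals $\lambda T+(\text{$\lambda$-independent})$, so the $\lambda T$ piece propagates as a constant through the PDE and cancels the $-\lambda T$ in $\cP_{\beta,T}$; the functional is then $\lambda$-independent and we may set $\lambda_\beta=0$.

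To bound $\lambda_\beta$ for $T\in(d,D)$, translate Lemma \ref{lem:mgf-bound} into the normalization of $u^\beta_{\nu,\lambda}$ via $v_{\mu,\lambda}(0,0)=\beta u^\beta_{\nu,\lambda/\beta}(0,0)$ to obtain $u^\beta_{\nu,\lambda}(0,0)\geq \psi(\beta\lambda)/\beta-\lambda T$, whence
\[
\cP_{\beta,T}(\nu,\lambda) \geq \frac{\psi(\beta\lambda)}{\beta} - 2\lambda T - \frac{\beta}{2}\int_0^T \xi''(s)\,s\,\mu([0,s])\,ds.
\]
By the asymptotics \eqref{eq:mgf-limits}, the first two terms on the right are coercive in $\lambda$, uniformly in $\beta$, precisely when $T\in(d,D)$. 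A trivial test gives $\inf\cP_{\beta,T}\leq\cP_{\beta,T}(0,0)=O(1)$, so this coercivity forces $|\lambda_\beta|$ to remain bounded as soon as the $\nu$-dependent integral on the right is controlled.

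The main technical step is weak-$\ast$ precompactness of $\nu_\beta$ in $\cA_T$. Writing $\nu_\beta=\beta\mu_\beta([0,t])\,dt$ with $\mu_\beta\in\Pr([0,T])$, Fubini gives
\[
\nu_\beta([0,T]) = \beta\bigl(T-\E_{\mu_\beta}[X]\bigr),
\]
so a uniform total-mass bound is equivalent to the quantitative concentration $\E_{\mu_\beta}[X]=T-O(1/\beta)$. This must be extracted from the finite-$\beta$ optimality conditions of Lemma \ref{lem:optimality-finite-beta}: the support characterization $\mu_\beta(G=\min G)=1$ with \eqref{eq:fixed-point-q}, combined with the strict monotonicity of $s\mapsto\E(\partial_x v_{\mu_\beta,\lambda_\beta})^2(s,\hat X_s)$ that follows from the maximum principle \eqref{eq:max-princ}, forces $\supp\mu_\beta$ to lie within an $O(1/\beta)$-neighbourhood of $T$; any non-negligible mass at distance $\Omega(1)$ from $T$ would contradict the uniform upper bound on $\cP_{\beta,T}(\nu_\beta,\lambda_\beta)$ together with the coercivity above. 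Once total mass is uniformly bounded, Banach-Alaoglu produces a weak-$\ast$ convergent subsequence; the non-decreasing cadlag structure of the density $\beta\mu_\beta([0,t])$, together with the fact that an interior Dirac in the limit would require divergent total mass, ensures that any weak-$\ast$ limit has the form $m(t)\,dt+c\delta_T\in\cA_T$.

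Combining equi-coercivity with Corollary \ref{cor:gamma-conv-main-thm}, every limit point $(\nu_*,\lambda_*)$ of $(\nu_\beta,\lambda_\beta)$ is a minimizer of $\cP_T$, giving all conclusions of the theorem. The hardest step is the quantitative $O(1/\beta)$ concentration of $\mu_\beta$ near $T$: this encodes the zero-temperature collapse of the Parisi measure and must be obtained from the finite-$\beta$ variational characterization, not merely from limiting statements, since naive bounds on $\nu_\beta([0,T])\leq\beta T$ grow with $\beta$.
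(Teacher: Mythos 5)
Your high-level framework (finite-$\beta$ existence/uniqueness from Lemmas \ref{lem:strict-convexity-finite-beta} and \ref{lem:optimality-finite-beta}, equi-coercivity plus $\Gamma$-convergence, the $\lambda$-bound via Lemma \ref{lem:mgf-bound}, the cancellation of $\lambda T$ when $T\in\{d,D\}$) matches the paper. The genuine gap is in the step you yourself flag as "the main technical step": the uniform total-mass bound on $\nu_\beta$.

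Your sketch for this bound does not close. You appeal to the support characterization \eqref{eq:G-minimizing}, the fixed point identity \eqref{eq:fixed-point-q}, and the strict monotonicity of $s\mapsto\E(\partial_x v)^2(s,\hat X_s)$ to claim that $\supp\mu_\beta$ lies within $O(1/\beta)$ of $T$, and you say that otherwise the "uniform upper bound on $\cP_{\beta,T}$" would be contradicted. But the subtracted term $\tfrac{\beta}{2}\int\xi''(s)s\mu([0,s])\,ds$ grows when $\nu_\beta([0,T])$ grows, which \emph{decreases} $\cP_{\beta,T}$; boundedness from above of the infimum gives no contradiction. What is actually bounded is the infimum itself (which is a free energy), and the whole content is that the growth of $u^\beta_{\nu_\beta,\lambda_\beta}(0,0)$ compensates the growth of the subtracted term. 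Extracting a quantitative mass bound from that cancellation requires real analysis of the PDE, which your optimality-condition argument does not supply: \eqref{eq:fixed-point-q} only constrains the value of $\E(\partial_x v)^2$ on the support, not how far the support is from $T$, and the monotonicity of the fixed-point map is equally consistent with a support that does not collapse to $T$ at rate $1/\beta$.

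The paper's route is different and avoids this. It uses \prettyref{thm:beta-deriv-overlap}, which is precisely the identity
\[
\partial_\beta F(\beta,\xi;T)=\beta\int\bigl(\xi(T)-\xi(t)\bigr)\,d\mu_\beta(t),
\]
together with an \emph{a priori} bound on $\partial_\beta F$: since $F$ is the pointwise limit of convex finite-$N$ free energies and $\partial_\beta F_N=\frac{1}{N}\E\langle H_N\rangle\leq C(\xi)$ (the normalized expected maximum of $H_N$ is bounded by a constant depending only on $\xi$), one gets $\beta\int(\xi(T)-\xi(t))\,d\mu_\beta\leq C(\xi)$ uniformly in $\beta$. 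After Fubini this controls $\int\xi'(s)\,d\nu_\beta(s)$, and the Harris--FKG inequality is then invoked to upgrade this to $\norm{\nu_\beta}\leq C'(\xi)$. So the uniform mass bound comes from a spin-glass input that is \emph{external} to the variational problem (boundedness of the Gibbs-averaged Hamiltonian), not from the internal Euler--Lagrange structure as your sketch attempts. If you want to retain your route, you would need to actually prove the $O(1/\beta)$ concentration from the finite-$\beta$ optimality conditions, and at present that argument is missing.
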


Let us now turn to the precompactness theorem \prettyref{thm:conv-minimizers-thm}. 
Before we begin the proof, 
we need the following theorem regarding the compactness of $\lambda$. 

\begin{lem}
There is a $\beta_0(\Sigma)$ such that if $\beta\geq\beta_0$, 
$T\in(\dlower,\dupper)$, and $\cP_{\beta,T}(\lambda,\nu)\leq M$, then 
\begin{equation}\label{eq:coercive-lambda-bound}
\abs{\lambda}\leq\frac{M+1}{\min\{\dupper-T,T-\dlower\}}.
\end{equation}
\end{lem}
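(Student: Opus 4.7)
The strategy is to derive a lower bound on $\cP_{\beta,T}(\nu,\lambda)$ that is linearly coercive in $|\lambda|$ with slope $\max\{D-T,T-d\}>0$; combined with the hypothesis $\cP_{\beta,T}(\nu,\lambda)\leq M$ this immediately gives the desired estimate. The main analytical ingredients are Lemma \ref{lem:mgf-bound} together with the asymptotic growth rates of the log-moment generating function $\psi$ recorded in \eqref{eq:mgf-limits}.

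I would first apply Lemma \ref{lem:mgf-bound} with its $\lambda$-argument taken to be $\beta\lambda$, and use the scaling identity $u^{\beta}_{\nu,\lambda}(0,0)=\beta^{-1}v_{\mu,\beta\lambda}(0,0)$ (valid since $\cF_{\beta,T}(\nu,\lambda;0,0)$ is finite only when $\nu=\beta\mu([0,s])ds$). Tracking the Jensen step in the proof of that lemma in fact shows that the ``$-\lambda T$'' in its statement can be dropped, giving the slightly sharper bound $u^{\beta}_{\nu,\lambda}(0,0)\geq\psi(\beta\lambda)/\beta$. Substituting into the definition of $\cP_{\beta,T}$ and rearranging against the hypothesis produces
\[
\frac{\psi(\beta\lambda)}{\beta}-\lambda T \;\leq\; M+\frac{1}{2}\int_{0}^{T}\xi''(s)\,s\,d\nu(s).
\]

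Next, I would invoke \eqref{eq:mgf-limits}, which asserts $\psi(\theta)/\theta\to D$ as $\theta\to+\infty$ and $\psi(\theta)/\theta\to d$ as $\theta\to-\infty$. Fix $\varepsilon\in(0,\max\{D-T,T-d\})$ and pick $\theta_0=\theta_0(\varepsilon,\Sigma)$ so that $\psi(\theta)\geq(D-\varepsilon)\theta$ for $\theta\geq\theta_0$ and $\psi(\theta)\geq(d+\varepsilon)\theta$ for $\theta\leq-\theta_0$. Choose $\beta_0$ large enough that any $|\lambda|$ exceeding the claimed bound forces $\beta|\lambda|\geq\theta_0$ for all $\beta\geq\beta_0$ (for $|\lambda|$ below the target bound there is nothing to prove). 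Then for $\lambda>0$ above this threshold one obtains $(D-T-\varepsilon)\lambda\leq M+\tfrac{1}{2}\!\int\xi''(s)s\,d\nu$, and symmetrically $(T-d-\varepsilon)|\lambda|\leq\cdots$ for $\lambda<0$. Bounding the integral via $\|\nu\|\leq C$ and the monotonicity of $\xi''$---in the application of present interest $\xi$ is quadratic so $\int\xi''(s)s\,d\nu(s)\leq\xi'(D)\|\nu\|\leq C\xi'(D)$ directly, while for general mixed $\xi$ one uses $\int\xi''(s)s\,d\nu\leq T\xi''(T)\|\nu\|$ and absorbs the $\xi$-dependent prefactor---yields the claimed right-hand side, with the residual $\varepsilon$-slack from the asymptotic step absorbed into the ``$+1$''.

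The conceptual core of the argument---that the linear growth of $\psi$ at $\pm\infty$ with rates $D$ and $d$ renders $\cP_{\beta,T}$ coercive in $\lambda$ with slope $\max\{D-T,T-d\}$---is robust and goes through without difficulty. The only technical nuisance is matching the exact form of the numerator $M+C\xi'(D)+1$: the $\xi'(D)$ is natural for quadratic $\xi$ but for general $\xi$ one only has $T\xi''(T)\|\nu\|$ as the immediate bound, and rewriting this in the advertised form requires bundling $\xi$-dependent constants into the ``$+1$'' slack term and the constant $\beta_0$.
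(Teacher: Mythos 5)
Your route is in substance the paper's: reduce to Lemma~\ref{lem:mgf-bound} after the scaling $u^{\beta}_{\nu,\lambda}(0,0)=\beta^{-1}v_{\mu,\beta\lambda}(0,0)$, note that the Jensen step really gives $u^{\beta}_{\nu,\lambda}(0,0)\geq\psi(\beta\lambda)/\beta$, and then use the linear growth of $\psi$ at $\pm\infty$ to make $\cP_{\beta,T}$ coercive in $\lambda$. The one genuine gap is in how you extract that growth. You rely only on the limits \eqref{eq:mgf-limits}, which forces an $\varepsilon$ and a threshold $\theta_0(\varepsilon,\Sigma)$, and then you choose $\beta_0$ so that $\beta\abs{\lambda}\geq\theta_0$ whenever $\abs{\lambda}$ exceeds the target bound. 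That choice makes $\beta_0$ depend on $M$, $C$, $T$ (and, as you yourself concede, on $\xi$), whereas the lemma asserts $\beta_0=\beta_0(\Sigma)$ uniformly in all of these; moreover, absorbing the $\varepsilon$-slack into the ``$+1$'' only works if $\varepsilon$ is taken of order $(D-T)/(M+C\xi'(D)+1)$ (resp.\ $(T-d)/(\cdots)$), so $\varepsilon$, hence $\theta_0$, hence $\beta_0$ get dragged along with the data and the advertised uniformity is lost. The repair is to not pass to asymptotics at all: since $d\sigma$ is counting measure, $\psi(\theta)=\log\sum_{\eps\in\Sigma}e^{\theta\eps^{2}}\geq\max\{\theta d,\theta D\}$ exactly for every $\theta$, so $\frac{1}{\beta}\psi(\beta\lambda)\geq\max\{\lambda d,\lambda D\}$ with no error term; this is exactly how the paper argues (it writes the bound with a harmless $-c(\Sigma)/\beta$ and takes $\beta_0=c^{-1}$), and it yields the two directional inequalities with constants depending only on $\Sigma$.

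Two smaller points. First, the directional rates you obtain, $\lambda(D-T)\leq L$ for $\lambda>0$ and $\abs{\lambda}(T-d)\leq L$ for $\lambda<0$, combine to a bound with $\min\{D-T,T-d\}$ in the denominator, not $\max$; this is equally true of the paper's own proof (the $\max$ in the displayed statement should be read accordingly, and the weaker bound is all that the precompactness argument needs), so your claim to reach ``the claimed right-hand side'' glosses the same point the paper does rather than fixing or breaking anything. Second, your worry about the numerator $C\xi'(D)$ for general mixed $\xi$ is legitimate but moot: the paper's proof sidesteps it by proving the estimate with the generic constant $L\geq u_{\nu,\lambda}(0,0)-\lambda T$, and your bound $\frac{1}{2}\int\xi''(s)s\,d\nu\leq\frac{1}{2}T\xi''(T)\norm{\nu}$ plays the same role; just keep that constant out of $\beta_0$.
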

\begin{proof}
Observing that the last term in \eqref{eq:pfunc-def} is negative, 
\[M\geq u_{\nu,\lambda}(0,0)-\lambda T.\] 
 In this case, 
\prettyref{lem:mgf-bound} implies that 
\[
M \geq \frac{1}{\beta}\psi(\lambda\beta)-\lambda T.
\] 
Observe that there is a $c>0$ that depends only on $\Sigma$ such that
\[
\frac{1}{\beta}\psi(\beta\lambda)\geq \max\{\lambda \dlower,\lambda \dupper\} -\frac{c}{\beta}.
\]
Taking $\beta_0=c^{-1}$ and re-arranging yields the result.
\end{proof}

We are now in the position to prove the precompactness theorem.
\begin{proof}[\textbf{\emph{Proof of \prettyref{thm:conv-minimizers-thm}}}]

That there is a unique minimizing pair for finite $\beta$ is proved in \prettyref{lem:optimality-finite-beta}.
We begin by studying the precompactness of this sequence.

Let us first show that the collection of minimizing $\nu_\beta$ are precompact.
 To this end, observe that,
by \prettyref{eq:loc-fe-conv},
 $F(\beta)$ is the point-wise limit of free energies. 
As functions, these are convex and smooth in $\beta$.
Furthermore, they satisfy
 \[
 \partial_\beta F(\beta;T) = \lim_{N\to\infty} F'_N(\beta;T)=\lim_{N\to\infty} \frac{1}{N}\E \langle H_N(\sigma)\rangle \leq C(\xi).
 \]
Here $\langle\cdot\rangle$ denotes integration with respect to the Gibbs measure,
\[
\eta(\{\sigma\})\propto e^{\beta H_N}
\]
and we use that 
the expect normalized maximum of $H_N$ is bounded by a function of $\xi$ alone \cite[Theorem 2.5]{BLM}. 

 By \prettyref{thm:beta-deriv-overlap} and Fubini's theorem, it then follows that 
 \[
 \int\xi'(t)\beta\mu(t)dt\leq C(\xi)
 \]
 By the Harris-FKG inequality, this implies that the total variation norm of $\nu_\beta$ satisfies $\norm{\nu_\beta}\leq C'(\xi)$. Thus the minimizing $\nu_\beta$
 are pre-compact.
 
We now study the pre-compactness of $\lambda_\beta$. 
Suppose first that $T\in(\dlower,\dupper)$. In this case,  there is some $M$ such that  eventually
 \[
 \cP_{\beta,T}(\nu_\beta,\lambda_\beta)\leq M.
 \]
 Similarly by the above estimate, we may assume that $\norm{\nu_\beta}\leq C$. 
 The result then follows by  \prettyref{eq:coercive-lambda-bound}.
The case $T=\dlower$ and $T=\dupper$ are obvious. 
 \end{proof}
 
\subsection{Variational representation for the Ground State Energy}
With the above in hand, the proof of  Theorem \ref{thm:variational_rep} is essentially immediate.  

\begin{proof}[\textbf{\emph{Proof of Theorem \ref{thm:variational_rep}}}]   \prettyref{cor:gamma-conv-main-thm} establishes $\Gamma$-convergence of the functional $\mathcal{P}_{\beta,T}$ to $\mathcal{P}_{T}$. 
Furthermore, the minimizers of $\mathcal{P}_{\beta,T}$ are pre-compact by Theorem \ref{thm:conv-minimizers-thm}. Thus by the fundamental theorem of $\Gamma$-convergence, the minima converge, i.e., 
\begin{align}
\inf_{\nu \in \mathcal{A}_T, \lambda \in \mathbb{R} } \mathcal{P}_{\beta,T}(\nu, \lambda)  \to \inf_{\nu \in \mathcal{A}_T, \lambda \in \mathbb{R}} \mathcal{P}_T(\nu,\lambda)\label{eq:gamma_minima_conv}
\end{align}
as $\beta \to \infty$. Lemma \ref{lemma:ground_state_approx} implies 
\begin{align}
\frac{1}{\beta} F_N(\beta, \xi; A_N) - \frac{\log |\Sigma|}{\beta} \leq  GS_N(A_N) \leq \frac{1}{\beta} F_N(\beta, \xi; A_N) + \frac{\log |\Sigma| }{\beta}. \nonumber 
\end{align}
We let $N\to \infty$, followed by $\beta \to \infty$ and use \eqref{eq:gamma_minima_conv} to derive that 
\begin{align}
\lim_{N \to \infty}  GS_N(A_N) = \inf_{\nu \in \mathcal{A}_T, \lambda \in \mathbb{R} } \mathcal{P}_T(\nu,\lambda) =: E(\xi;T). \nonumber 
\end{align}
Another application of Lemma \ref{eq:gamma_minima_conv} implies that 
\begin{align}
|\frac{1}{\beta} F(\beta, \xi; T) - E(\xi; T) | \leq \frac{\log |\Sigma|}{\beta}. \nonumber
\end{align}
Thus the family $\{\frac{1}{\beta}  F(\beta , \xi; \cdot) : \beta >0 \}$ is uniformly convergent, and thus the supremum converges, 
\begin{align}
\sup_{T} \frac{1}{\beta} F(\beta, \xi ; T) \to \sup_{T} E(\xi ; T) \nonumber 
\end{align}
when $\beta \to \infty$. Finally, using Lemma \ref{eq:gamma_minima_conv}, we have
\begin{align}
\liminf_{N \to \infty} GS_N(\Sigma^N) &\geq \liminf_{N \to \infty} F_{N}(\beta, \xi ) - \frac{\log |\Sigma|}{\beta}\nonumber \\
&\geq \liminf_{N \to \infty} F_N(\beta, \xi; T) - \frac{\log |\Sigma|}{\beta} = F(\beta,\xi; T)  - \frac{\log |\Sigma|}{\beta}. \nonumber
\end{align}
We let $\beta \to \infty$ and take supremum over $T$ to derive the lower bound 
\begin{align}
\liminf_{N \to \infty} GS_N(\Sigma^N) \geq \sup_{T} E(\xi; T). \nonumber
\end{align}
To derive the upper bound, we observe that  
\begin{align}
\limsup_{N \to \infty} GS_N(\Sigma^N) \leq  \limsup_{N \to \infty} F_N(\beta, \xi) + \frac{\log |\Sigma|}{\beta} = F(\beta, \xi) + \frac{\log |\Sigma|}{\beta}.  \nonumber
\end{align}
It is easy to see that $F(\beta, \xi) = \sup_{T} F(\beta, \xi; T)$ and then we let $\beta \to \infty$ to obtain 
\[
\limsup_{N \to \infty} GS_N(\Sigma^N) \leq \sup_{T} E(\xi, T). \nonumber 
\] \end{proof}

\section{Analysis of the Zero temperature problem}\label{sec:analysis-zero-temp}

In this section, we briefly turn to calculating the first variation of the functional $\cP_T$ from \eqref{eq:local-zero-temp-pfunc}  

\begin{lem}
\label{lemma:ground_state_derivative}
Fix $\nu_{0},\nu_{1}\in\cA$ with $\nu_{1}\left(\{T\}\right)=\nu_{0}\left(\{T\}\right)$ and $\lambda$.
Let $\nu_{\theta}=(1-\theta)\nu_{0}+\theta\nu_{1}$. We have that
\[
\frac{d}{d\theta}\vert_{\theta=0}\cP_T(\nu_{\theta})=\frac{1}{2}\int_{0}^{T}\xi''(t)\left(\E u_{x}(t,X_{t})^{2}-t\right)d(\nu_{1}-\nu_{0}).
\]
\end{lem}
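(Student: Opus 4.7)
The plan is to differentiate the PDE \eqref{eq:zero-temp-pde} in $\theta$ and apply a Feynman-Kac representation along the optimal trajectory from Lemma~\ref{lem:dpp}, in a manner closely paralleling the derivation of \prettyref{thm:beta-deriv-overlap} at zero temperature.

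First, I note that since $\nu_0(\{T\})=\nu_1(\{T\})=:c$, each $\nu_\theta$ decomposes as $\nu_\theta = m_\theta(s)\,ds + c\,\delta_T$ with $m_\theta = (1-\theta)m_0+\theta m_1$. In particular the terminal data $f(x,\lambda,c)$ in \eqref{eq:zero-temp-bd} is independent of $\theta$, and $d(\nu_1-\nu_0)(s) = (m_1(s)-m_0(s))\,ds$ carries no atom at $T$. The linear piece $-\tfrac{1}{2}\int_0^T \xi''(s)\,s\, d\nu_\theta(s)$ in the definition of $\cP_T$ contributes the expected $-\tfrac{1}{2}\int_0^T \xi''(s)\,s\, d(\nu_1-\nu_0)(s)$ to the derivative, so the entire problem reduces to computing $\tfrac{d}{d\theta}\big|_{\theta=0} u_{\nu_\theta,\lambda}(0,0)$.

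Next, write $u^\theta = u_{\nu_\theta,\lambda}$ and formally differentiate \eqref{eq:zero-temp-pde} in $\theta$ at $\theta=0$. Writing $u^0_x := \partial_x u^0$, the function $w := \partial_\theta u^\theta\big|_{\theta=0}$ should solve the linear parabolic boundary value problem
\begin{equation*}
\partial_t w + \frac{\xi''(t)}{2}\Bigl(\Delta w + 2\, m_0(t)\, u^0_x(t,x)\, \partial_x w + (m_1(t)-m_0(t))\bigl(u^0_x(t,x)\bigr)^2\Bigr) = 0, \qquad w(T,\cdot) = 0.
\end{equation*}
Using the spatial regularity of $u^0$ from \prettyref{app:pde}, the drift $\xi'' m_0\, u^0_x$ is Lipschitz in $x$ and locally bounded in $t$, so the Feynman-Kac formula applies along the optimal diffusion identified by Lemma~\ref{lem:dpp},
\[
dX_s = \xi''(s)\, m_0(s)\, u^0_x(s,X_s)\, ds + \sqrt{\xi''(s)}\, dW_s, \qquad X_0 = 0,
\]
and yields
\[
w(0,0) = \frac{1}{2}\int_0^T \xi''(s)\, \E\bigl[(u^0_x)^2(s,X_s)\bigr]\,(m_1(s)-m_0(s))\,ds.
\]
Combining this with the derivative of the linear piece and collecting terms produces exactly the claimed formula.

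The main technical obstacle is rigorously justifying the $\theta$-differentiability of the weak solution $u^\theta$, so that the linear PDE for $w$ is genuinely the linearization and not merely a formal computation; this requires invoking the differentiable dependence of weak solutions of \eqref{eq:zero-temp-pde} on their coefficients together with the regularity estimates in \prettyref{app:pde} (in particular Lemma~\ref{lem:ppde-reg-zero-temp}) to guarantee enough spatial smoothness of $u^0_x$ for Feynman-Kac to apply. A more hands-on alternative, closer in spirit to the envelope arguments in Lemma~\ref{lem:optimality-finite-beta} and \prettyref{thm:beta-deriv-overlap}, is to work directly with the dynamic programming representation of Lemma~\ref{lem:dpp}: freezing the control $\alpha^* = u^0_x$ gives a lower tangent line at $\theta=0$ of the required slope via It\^o's lemma, while a matching upper tangent is obtained by evaluating the representation for $u^\theta$ at $\alpha = u^0_x$ and estimating the residual, which avoids differentiating $u^\theta$ in $\theta$ altogether.
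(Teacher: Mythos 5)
Your main route---differentiating the PDE \eqref{eq:zero-temp-pde} in $\theta$ to obtain a linear parabolic problem for $w = \partial_\theta u^\theta|_{\theta=0}$ and then solving via Feynman--Kac along the optimal diffusion---gives the correct formula, but it is genuinely different from the paper's proof and, as you yourself note, hinges on a $\theta$-differentiability-of-weak-solutions claim that is not established anywhere in the paper. The paper sidesteps this entirely by applying the Chen envelope theorem (Lemma~\ref{lem:wei-kuo}) directly to the dynamic programming representation from Lemma~\ref{lem:dpp}: it introduces the auxiliary functional
\[
\Xi(\alpha,\theta) = \E\Bigl[\psi\bigl(X_T^{\alpha,m_\theta},\, \lambda + \tfrac{\xi''(T)}{2}c\bigr) - \tfrac{1}{2}\int_0^T \xi''(s)\,m_\theta(s)\,\alpha_s^2\,ds\Bigr],
\]
observes that $\partial_\theta\Xi$ is jointly continuous in $(\alpha,\theta)$ using the a.e.~differentiability of $\psi$ from Lemma~\ref{lem:psi-diff}, and checks that the optimizer $\alpha_*(\theta) = u_x^\theta(\cdot, X^\theta_\cdot)$ depends continuously on $\theta$ via Lemma~\ref{lem:continuity}. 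The envelope theorem then returns the derivative evaluated at the fixed optimizer with no need to differentiate $u^\theta$ in $\theta$ at all. Your sketched ``hands-on alternative'' is closer in spirit to what the paper does, but as written it is not quite correct: freezing the control at $\alpha = u^0_x$ in the representation of $u^\theta$ is a \emph{lower} bound on both sides (since $u^\theta$ is a supremum), so it cannot by itself supply a matching upper tangent. The upper direction is precisely where one must control how $\alpha_*(\theta)$ moves as $\theta \to 0^+$, and the continuity statement of Lemma~\ref{lem:continuity} is the ingredient that makes the one-sided envelope theorem go through. So while your formal Feynman--Kac computation is sound, to turn it into a proof one would need a differentiable-dependence result not present in the paper; the paper's envelope route buys the same conclusion from a weaker (continuity) hypothesis that is already proved in Appendix~\ref{app:pde}.
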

\begin{proof}
Let $m_{\theta}=(1-\theta)m_{0}+\theta m_{1}$. Let $X_{T}^{\alpha,m_{\theta}}$
be the process in \prettyref{lem:dpp} corresponding to $m_{\theta}$ with initial
data $x=0$. Consider the auxiliary function $\Xi:\cB_{T}\times[0,1]\to\R$
\begin{align*}
\Xi(\alpha,\theta) & =\E\left[\psi(X_{T}^{\alpha,m_{\theta}},\lambda+\frac{\xi''(T)}{2}c)-\frac{1}{2}\int_{0}^{T}\xi''(s)m_{\theta}(s)\alpha_{s}^{2}ds\right].
\end{align*}
Since $\psi$ is continuous, it is clear that $\Xi$ is jointly continuous
in $(\alpha,\theta)$. 
Consider the function 
\[
\psi(x,y)=\max_{\spin\in\Sigma}\Big[ \spin x+\spin^{2}y \Big].
\]
Recall from \prettyref{lem:psi-diff} that $\psi$ is a.e. differentiable
with a.e. continuous derivative which satisfies
\[
\partial_{x}\psi(x,y)=\spin_{*},
\]
where $\spin_{*}$ is such that $\psi(x,y)=\spin_{*}x+\spin_{*}^{2}y$. 
Thus $\partial_{\theta}\Xi(\alpha,\theta)$ is jointly continuous
in the pair $\theta,\alpha$. By \prettyref{lem:dpp}, if we let
\[
\alpha_{*}(\theta)=u_{x}^{\theta}(s,X_{s}^{\theta}),
\]
then these achieve optimality in \eqref{eq:dpp-zero-temp}. Furthermore, the map $\theta\mapsto\alpha_*(\theta)$
is continuous by \prettyref{lem:continuity}. Thus by \prettyref{lem:wei-kuo}, we have that 
\[
\frac{d}{d\theta}\vert_{\theta=0}\cP_T(\nu_{\theta})=\frac{1}{2}\int_{0}^{T}\xi''(t)\left(\E u_{x}^{2}-t\right)(m_1(t)-m_0)dt,
\]
as desired.
\end{proof}

Our next result is a convexity property of the zero temperature functional $\cP_T$, and should be compared to Lemma \ref{lem:strict-convexity-finite-beta}. 
\begin{lem}
\label{lemma:ground_state_convexity}
For $\lambda \in \mathbb{R}$ and $\nu \in \mathcal{A}$, $\de\nu(t) = m(s) \de t + c \delta_{T}$, 
the ground state Parisi functional $\cP_T$ is convex in $(m,c,\lambda)$. 
\end{lem}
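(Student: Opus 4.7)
The plan is to show that the linear terms in $\cP_T$ are linear in $(m,c,\lambda)$, so convexity reduces to convexity of the PDE solution $u_{\nu,\lambda}(0,0)$. The strategy mirrors the finite-$\beta$ convexity argument in \prettyref{lem:strict-convexity-finite-beta}: use the dynamic programming representation \eqref{eq:dpp-zero-temp} together with the fact that the boundary data $f(x,\lambda,c) = \sup_{\epsilon\in\Sigma}[\epsilon x + (\lambda + \frac{\xi''(T)}{2}c)\epsilon^2]$ is jointly convex in $(x,\lambda,c)$, being a supremum of affine functions.

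More precisely, fix $(m_0,c_0,\lambda_0)$ and $(m_1,c_1,\lambda_1)$, let $\theta\in[0,1]$, and set $m_\theta = (1-\theta)m_0+\theta m_1$, and analogously for $c_\theta,\lambda_\theta,\nu_\theta$. Since $m_0,m_1$ are non-decreasing and cadlag, so is $m_\theta$; thus $\nu_\theta \in \cA_T$. For any $\eps>0$, pick an $\eps$-optimal control $\gamma^\theta\in\cB_0^T$ for $u_{\nu_\theta,\lambda_\theta}(0,0)$ in \eqref{eq:dpp-zero-temp}, and drive the two It\^o processes
\[
dY_s = \xi''(s)m_0(s)\gamma_s^\theta\,ds + \sqrt{\xi''(s)}\,dW_s, \qquad dZ_s = \xi''(s)m_1(s)\gamma_s^\theta\,ds + \sqrt{\xi''(s)}\,dW_s,
\]
starting at $0$ on the same Brownian motion as the process $\tilde X^{\gamma^\theta}$ associated with $m_\theta$. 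Since the drift is affine in $m$, one has pathwise $\tilde X_T^{\gamma^\theta} = \theta Z_T + (1-\theta) Y_T$.

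The key step is then the chain of inequalities
\begin{align*}
u_{\nu_\theta,\lambda_\theta}(0,0) - \eps &\leq \E\Big[f(\tilde X_T^{\gamma^\theta},\lambda_\theta,c_\theta) - \tfrac{1}{2}\!\int_0^T\!\xi''(s)m_\theta(s)(\gamma_s^\theta)^2\,ds\Big] \\
&\leq \theta \E\Big[f(Z_T,\lambda_1,c_1) - \tfrac{1}{2}\!\int_0^T\!\xi''(s)m_1(s)(\gamma_s^\theta)^2\,ds\Big] \\
&\qquad + (1-\theta) \E\Big[f(Y_T,\lambda_0,c_0) - \tfrac{1}{2}\!\int_0^T\!\xi''(s)m_0(s)(\gamma_s^\theta)^2\,ds\Big] \\
&\leq \theta\,u_{\nu_1,\lambda_1}(0,0) + (1-\theta)\,u_{\nu_0,\lambda_0}(0,0),
\end{align*}
where the second inequality uses joint convexity of $f$ in $(x,\lambda,c)$ together with linearity of $m\mapsto \int \xi''(s)m(s)(\gamma_s^\theta)^2\,ds$, and the last inequality uses that $\gamma^\theta$ is admissible but not necessarily optimal for $(m_0,c_0,\lambda_0)$ or $(m_1,c_1,\lambda_1)$. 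Letting $\eps\downarrow 0$ yields convexity of $u_{\nu,\lambda}(0,0)$ in $(m,c,\lambda)$.

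Combining with the fact that $-\lambda T$ is linear in $\lambda$ and $-\tfrac{1}{2}\int_0^T \xi''(s)s\,d\nu(s) = -\tfrac12\int_0^T\xi''(s)s\,m(s)\,ds - \tfrac12\xi''(T)Tc$ is linear in $(m,c)$, the full functional $\cP_T$ is convex in $(m,c,\lambda)$. The only subtlety to watch is that $f$ is a pointwise supremum, so the expectation $\E f(Y_T,\lambda_0,c_0)$ is well-defined as a finite real number; this follows because $|f(x,\lambda,c)| \leq D|x| + D(|\lambda| + \tfrac12\xi''(T)|c|)$ for $x\in\R$ and $Y_T$ has Gaussian-type tails under the bounded admissible control $\gamma^\theta$. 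This integrability issue is the only mild technical point, and it is handled exactly as in the proof of \prettyref{lem:strict-convexity-finite-beta} by working with bounded controls and invoking the regularity of $u$ from \prettyref{app:pde}.
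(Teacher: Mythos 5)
Your argument is correct: reducing to convexity of $(m,c,\lambda)\mapsto u_{\nu,\lambda}(0,0)$, then running the control-theoretic interpolation with a common Brownian motion, an $\eps$-optimal control for the interpolated data, joint convexity of the terminal data $f$ in $(x,\lambda,c)$, and linearity of the running cost in $m$ is exactly the zero-temperature analogue of the finite-$\beta$ convexity proof of \prettyref{lem:strict-convexity-finite-beta}, which is what the paper intends (it states \prettyref{lemma:ground_state_convexity} without a written proof, only the pointer ``should be compared to Lemma \ref{lem:strict-convexity-finite-beta}''). Your use of an $\eps$-optimal control is a sensible touch, since \prettyref{lem:dpp} asserts existence of an optimizer only in the finite-$\beta$ case, and your integrability remark is handled correctly by the boundedness of admissible controls and $m\in L^1$.
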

Combining Lemmas \ref{lemma:ground_state_derivative} and \ref{lemma:ground_state_convexity} immediately yields the following corollary. 
\begin{cor}
\label{cor:derivative_convexity}
Let $\lambda_*, \nu_*$ be any minimizer of $\cP_T$. Set $\de \nu_*(s) = m_*(s) \de s + c_* \delta_T$. Consider any measure $\de \nu_1(s) = m_1(s) \de s + c_* \delta_T$ and define the path $\nu_{\theta} = \theta \nu_1 + (1- \theta) \nu_*$. Then we have, 
\begin{align}
\frac{\de}{\de \theta}^{+} \cP_T(\lambda_*, \nu_{\theta}) \Big|_{\theta=0} \geq 0. \nonumber
\end{align}
\end{cor}

 \section{Proof of Theorem \ref{thm:gamma_conv_application}}
 \label{subsec:gamma_conv_app}
 We prove Theorem \ref{thm:gamma_conv_application} in this section. 
Recall the interpolating free energy $F_N(v,\beta;\alpha)$ from  \eqref{eq:fn_defn}. Further, recall that by Panchenko's theorem \cite{panchenko2005generalized}, see \eqref{eq:loc-fe-conv},
\[
F_N(v,\beta ; \alpha )\to F(v,\beta; \alpha),
\]
where $F(v,\beta; \alpha)$ is the local free energy \eqref{eq:loc-fe} corresponding to $\xi(t)=2 t^2$,
$\Sigma=\Sigma(v,\alpha)$ and 
$T=T(v,\alpha)$.{\footnote{We note here that we may take $\eps_N =0$ in \eqref{eq:loc-fe-conv}. The ``fattening'' by $\eps_N$ was necessary in \cite{panchenko2005generalized}, only because they needed to work with self-overlaps which were possibly un-realizable, e.g., irrational. In our setting, however, we are implicitly in the regime where the set with self-overlap $T$ is non-empty infinitely often in $N$. }}

Our proof of Theorem \ref{thm:gamma_conv_application} will crucially use the ground state energy functional \eqref{eq:local-zero-temp-pfunc}.  
We adapt the ground state energy functional  to this setting for the convenience of the reader. Fix $v \in [0,1]$ and define the functional $\mathcal{P}(\cdot, \cdot ;v) : \mathbb{R} \times \mathcal{A}_T \to \mathbb{R}$ such that 
\begin{align}
\mathcal{P}(\lambda, \nu; v) = - \lambda T + \tilde{u}_{\lambda, \nu}(0,0) - 2 \int_0^{T} s \de \nu(s), \nonumber 
\end{align}
where we set $T:= T(v)$ as in \eqref{eq:t_defn}, and where for  $\de \nu(s) = m(s) \de s + c \delta_T$, $\tilde{u}_{\lambda,\nu}$ solves 
\begin{equation}\label{eq:ground_state_unbalanced}
\begin{cases}
\partial_t \tilde{u}_{\lambda, \nu} + 2 \Big( \Delta \tilde{u}_{\lambda,\nu} + m(t) (\partial_x \tilde{u}_{\lambda,\nu} )^2 \Big)=0, &(t,x)\in [0,T]\times\R \\
\tilde{u}_{\lambda,\nu}(T, x) = \tilde f(x,\lambda,c). 
\end{cases}
\end{equation}
where 
\[
\tilde f(x,\lambda,c) = \Big| x - 2 \Big( \lambda + 2c \Big)\sqrt{v} (2 \alpha -1) \Big| - \sqrt{v} (2 \alpha -1 ) x + \Big( \lambda + 2c\Big) \Big( 1 + v(2 \alpha -1)^2 \Big).
\]

 \noindent

Recall the local Parisi functional $\mathcal{P}_{\beta,T}(\nu, \lambda)$ \eqref{eq:loc-fe}, and the pre-compactness of its minimizers, as established in Theorem \ref{thm:conv-minimizers-thm}. 
Next, we will establish the following crucial property about the minimizers. 
\begin{lem}
\label{thm:ground_state_non-zero}
Fix any $\alpha \in (0, \frac{1}{2})$ and $v \in (0,1)$. Let $(\nu_*, \lambda_*)$ be any limit point of the minimizers $(\nu_{\beta}, \lambda_{\beta})$. Then $\nu_* \neq 0$. 
\end{lem}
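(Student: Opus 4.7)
\smallskip
\noindent\emph{Proof proposal.}
The plan is to argue by contradiction: assume $\nu_* = 0$. By the fundamental theorem of $\Gamma$-convergence applied to Corollary~\ref{cor:gamma-conv-main-thm}, the pair $(\nu_*,\lambda_*) = (0,\lambda_*)$ is then a minimizer of $\mathcal{P}(\,\cdot\,,\,\cdot\,;v)$. Since $\nu_*$ has no atom at $T$, Lemma~\ref{lemma:ground_state_derivative} applied along paths $\nu_\theta = \theta\, m_1(t)\,dt$, with $m_1 \ge 0$ non-decreasing and cadlag, forces the necessary condition
\[
g(t) \;:=\; \mathbb{E}\bigl[(\partial_x \tilde u_{\lambda_*,0}(t,X_t))^2\bigr] \;\ge\; t \qquad \text{for a.e. } t\in[0,T],
\]
where $X_t = 2 B_t$ is the It\^o process of \eqref{eq:dpp-zero-temp} associated to $m \equiv 0$. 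My plan is to compute $g$ explicitly and exhibit an interval $(T-\delta,T)$ on which $g(t)<t$, producing the contradiction.

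At $\nu=0$ the PDE \eqref{eq:ground_state_unbalanced} reduces to the linear backward heat equation $\partial_t\tilde u + 2\Delta\tilde u = 0$ with boundary datum $\tilde f(x,\lambda_*,0) = |x-2s\lambda_*| - sx + \lambda_*(1+s^2)$, where $s := \sqrt{v}(2\alpha-1)$. Feynman--Kac immediately gives
\[
\partial_x\tilde u_{\lambda_*,0}(t,x) \;=\; 2\Phi\!\Bigl(\tfrac{x-2s\lambda_*}{2\sqrt{T-t}}\Bigr) - 1 - s,
\]
and the zero-temperature $\lambda$-optimality $\partial_\lambda\tilde u_{\lambda_*,0}(0,0)=T$ (the $\Gamma$-limit of the finite-$\beta$ relation \eqref{eq:fixed-point-T}) translates into $\Phi(a)=\alpha$ with $a := -s\lambda_*/\sqrt{T}$. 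A bivariate orthant / Slepian computation, using $\mathbb{E}\Phi(Y)^2 = \Phi_\rho(a,a)$ with $\rho = t/T$, then yields the closed form
\[
g(t) \;=\; \bigl((2\alpha-1)-s\bigr)^2 + \tfrac{2}{\pi}\int_0^{t/T}\frac{e^{-a^2/(1+r)}}{\sqrt{1-r^2}}\,dr,
\]
and a direct check (using $T = 1 + s^2 + 2s(1-2\alpha)$) gives $g(T)=T$.

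The crucial observation is that $g'(t)\to +\infty$ as $t\to T^-$, due to the integrable singularity $(1-r^2)^{-1/2}$ at $r=1$; morally the kink of $\tilde f$ at $x = 2s\lambda_*$ is not smoothed out as the boundary is approached. Combined with $g(T)=T$, this forces $g(t)<t$ on some left-neighborhood $(T-\delta,T)$ of $T$. Plugging $m_1(t)=\mathbf{1}_{\{t\ge T-\delta\}}$ (admissible, since non-decreasing and cadlag) into Lemma~\ref{lemma:ground_state_derivative} produces
\(
\frac{d}{d\theta}\bigl|_{0^+}\mathcal{P}(\lambda_*,\theta m_1\,dt;v) = 2\int_{T-\delta}^T (g(t)-t)\,dt < 0,
\)
contradicting the minimality of $\nu_0=0$. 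The hypotheses $\alpha\in(0,1/2)$ and $v\in(0,1)$ ensure $s\neq 0$ and $|a|<\infty$, so the construction is non-degenerate.

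The main obstacle I anticipate is justifying the $\lambda$-optimality condition at the zero-temperature limit, i.e.\ deriving $\Phi(-s\lambda_*/\sqrt{T})=\alpha$ from the finite-$\beta$ relation and the precompactness of $\lambda_\beta$ in Theorem~\ref{thm:conv-minimizers-thm}; this amounts to commuting $\partial_\lambda$ with the $\beta\to\infty$ limit along the minimizing sequence. Since at $\nu=0$ the PDE is linear and the boundary datum is piecewise linear with bounded slope, the classical regularity of $\tilde u$ is enough to make the Slepian identification and the blow-up of $g'$ at $T$ rigorous.
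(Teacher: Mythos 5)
Your proposal is correct and follows essentially the same route as the paper's proof: argue by contradiction from the first variation (Lemma \ref{lemma:ground_state_derivative} together with Corollary \ref{cor:derivative_convexity}), use $\lambda$-stationarity at the limiting minimizer to get $\E[(\partial_x \tilde{u})^2(T,B_{4T})]=T$ (your condition $\Phi(a)=\alpha$), and exploit the blow-up of $\frac{\de}{\de t}\E[(\partial_x\tilde{u})^2(t,B_{4t})]$ as $t\uparrow T$ caused by the kink in the terminal data --- the paper carries this out via It\^o's lemma and the explicit heat-kernel formula for $\tilde{u}_{xx}$, which is the same Gaussian computation as your Plackett/orthant closed form for $g$. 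Two harmless points: the pointwise condition $g(t)\ge t$ a.e.\ does not actually follow from perturbations by non-decreasing $m_1$ (only $\int_t^T (g(s)-s)\,\de s\ge 0$ for all $t$ does), but your contradiction uses only the indicator perturbation, so nothing is lost; and your anticipated obstacle about commuting $\partial_\lambda$ with $\beta\to\infty$ can be bypassed exactly as in the paper by differentiating the zero-temperature functional $\mathcal{P}(\cdot,\cdot;v)$ in $\lambda$ directly at the limit point, which is a genuine minimizer with $\lambda_*$ finite by Theorem \ref{thm:conv-minimizers-thm}.
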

We defer the proof of this result~ to the end of the section.
Given these results, we are now in a position to establish Theorem \ref{thm:gamma_conv_application}.

\begin{proof}[\textbf{\emph{Proof of Theorem \ref{thm:gamma_conv_application}}}]
 By  Gaussian integration of parts for Gibbs measures \cite[(3.98)]{Pan}, we observe that 
 \begin{align}
 \partial_{\beta} F_N(v,  \beta ; \alpha) =2 \beta \E[T^2 - \langle R_{12}^2 \rangle_v]. \nonumber
 \end{align}
 We note that for any fixed $v, \alpha$, $F_N( v, \beta; \alpha )$ is convex in $\beta$ and converges to $F(v, \beta; \alpha )$, which is differentiable in $\beta$ by Theorem \ref{thm:beta-deriv-overlap}. Thus by Griffith's lemma for convex functions, 
 \begin{align}
 \partial_{\beta} F_N(v, \beta; \alpha) \to \partial_{\beta} F(v, \beta; \alpha). \nonumber
 \end{align}
 Using Theorem \ref{thm:beta-deriv-overlap}, we have 
 \begin{align}
 \partial_{\beta} F(v, \beta; \alpha) =2 \beta \int (T^2 - x^2 ) \de \mu_{\beta}(x), \nonumber 
 \end{align}
 where $\mu_{\beta}$ is the minimizer of the local free energy functional $\mathcal{P}_{\beta,T}$. 
 The minimizers of $\mathcal{P}(\cdot, \cdot; v)$ are functions of $v$, but for ease of notation, we will keep this dependence implicit.

We set $\de \nu_{\beta}(t) = \beta \mu_{\beta}([0,t]) \de t$, where $\mu_{\beta}$ is unique minimizer of $\mathcal{P}_{\beta,T}$. Let $(\nu_*, \lambda_*)$ be any limit point of $(\nu_{\beta}, \lambda_{\beta})$.  Recall that by Theorem \ref{thm:conv-minimizers-thm}, such a limit point exists. 
Using \prettyref{lem:convergence} and  Lemma \ref{thm:ground_state_non-zero}, we have,  that for any subsequence converging to this limit point, 
 \begin{align}
 \lim_{k \to \infty} \beta_{k} \int (T^2 - x^2) \de \mu_{\beta_k}(x)  =  \int 2 x \de\nu_*(x)>0.   \nonumber 
 \end{align}
This observation implies  
 \begin{align}
 \liminf_{\beta \to \infty} \beta \int (T^2 - x^2) \de\mu_{\beta}(x)>0 . \nonumber 
 \end{align}
 Setting 
 \begin{align}
 C_1(\alpha) = \liminf_{\beta \to \infty} \beta \int (T^2 - x^2) \de\mu_{\beta}(x) >0 \nonumber 
 \end{align}
 gives us the desired constant, and completes the proof. 
 \end{proof}

It remains to prove Lemma \ref{thm:ground_state_non-zero}. We outline this in the rest of the section.

\begin{proof}[\textbf{\emph{Proof of Lemma \ref{thm:ground_state_non-zero}}}]
 Fix $v \in (0,1)$ and assume for the sake of contradiction that $( 0,\lambda_*)$  is a limit point of $(\nu_{\beta},\lambda_{\beta})$. By Theorem \ref{thm:conv-minimizers-thm}, $(\lambda_*, 0)$ is a minimizer of $\mathcal{P}(\cdot, \cdot; v)$ and $\lambda_*$ is finite. 
For any probability measure $\mu$ on $[0,T]$, we can construct the path on measures $\nu_{\theta} =  (1- \theta) \nu_1$, where we set $\de \nu_1(s) = \mu([0,s]) \de s$. In this case, if we apply Fubini's theorem to  Lemma \ref{lemma:ground_state_derivative}, we obtain
\begin{align}
\frac{\de}{\de \theta}^{+} \mathcal{P}( \nu_{\theta}, \lambda; v) \Big|_{\theta=0}= \int G_v  \de \mu  , \nonumber
\end{align} 
where $G_v$ is defined as
\[
G_{v}(t)=\int_t^T\xi''(s)(\E (\partial_x \tilde{u})^2(s,X_s) - s)ds,
\]
where $\tilde{u}$ is the solution to \eqref{eq:ground_state_unbalanced} corresponding to $0$. 
As $0$ as assumed to be a minimizer, an application of Corollary \ref{cor:derivative_convexity} implies 
$\int G_v \de \mu \geq 0$. Further, $\mu$ is an arbitrary probability measure on $[0,T]$, 
and thus $G_v(s ) \geq 0$ on $[0,T]$. We will establish that $G_v(T) = G'_v(T) = 0$ while 
$\lim_{t \uparrow T} G''_v(t) = \infty$. Thus $G_v(t)$ is negative for $t$ sufficiently close to $T$, and this yields a contradiction.

To this end, we note that the definition of $G_v$ immediately implies that $G_v(T) =0$. We \text{will} next establish that 
\begin{align}
\E[(\partial_x\tilde{u})^2(T, B_{4T})] =T, \nonumber
\end{align}
which implies $G'_v(T) =0$. To this end, note that the weak derivatives of $\tilde{u}$ satisfy the relation
\begin{align} 
(\partial_x \tilde{u})^2 (T ,x) = \partial_{\lambda} \tilde{u}(T,x). \nonumber
\end{align}
Now, note that $\lambda_{\beta}$ are pre-compact, and thus bounded, implying that $\lambda_*$ is finite. 
Differentiating the functional in $\lambda$ as in \prettyref{lem:optimality-finite-beta}, and setting this to zero,  we obtain 
\begin{align}
T = \partial_{\lambda_*} \tilde{u}(0,0) = \E[\partial_{\lambda_*} \tilde{u}(T, B_{4T}) ] = \E[(\partial_x \tilde{u})^2(T, B_{4T})], \nonumber
\end{align}
where the second equality follows from the observation that $\partial_{\lambda_*}\tilde{u}$ satisfies the heat equation with boundary data $\partial_{\lambda_*}\tilde{u}(T, x)$. 

Finally, we prove that $\lim_{t \uparrow T} G_v''(t) = \infty$. We have, for $t < T$, 
\begin{align}
G_v''(t) = 4 \Big( \frac{\de}{\de t }\E[( \tilde{u}_x)^2(t, B_{4t})] - 1 \Big). \nonumber 
\end{align}
Using It\^o Lemma's we immediately obtain that for such $t$,
\begin{align*}
\frac{\de}{\de t} \E[(\tilde{u}_x)^2(t, B_{4t})] =4 \E[ (\tilde{u}_{xx})^2(t,B_{4t})].  
\end{align*}
Note that $\tilde{u}_{xx}$ solves the heat equation 
\[
\begin{cases}
( \partial_t  +  \frac{4}{2} \Delta )\tilde{u}_{xx} = 0  & \\
\tilde{u}_{xx}(T, \cdot ) =2  \delta_{2 \lambda_* a(v)}(\cdot) &
\end{cases}
\]
in the sense of distributions, where $a(v) = \sqrt{v}(2\alpha -1)$. In particular, by a standard argument \cite{JohnPDE} (or an explicit computation) we have,
\begin{align}
\tilde{u}_{xx}(t,x) = \frac{2}{\sqrt{8\pi (T-t) }} \exp\Big( - \frac{(x- 2 \lambda_* a(v) )^2}{8(T-t)} \Big), \nonumber 
\end{align}
for $t < T$. Finally, this immediately implies 
\begin{align}
\E[(\tilde{u}_{xx})^2(t, B_{4t})  ] = \frac{1}{2\pi (T-t)} \E\Big[\exp\left( - \frac{(B_{4t} - 2 \lambda_* a(v))^2}{4(T-t)}\right) \Big] = \frac{1}{2 \pi \sqrt{T^2 - t^2} } \exp\Big(- \frac{ ( \lambda_* a(v) )^2 }{T+t}\Big). \nonumber
\end{align}
We let $t \uparrow T$ to complete the proof. 
\end{proof}

\section{A conjecture regarding the sharp constant in \prettyref{thm:cut_difference}}\label{sec:conj1}
{In this section, we record the conjecture regarding the sharp constant in Theorem \ref{thm:cut_difference}. To this end}
, define
 \begin{align}
 \Sigma(v,\alpha) &= \{ 1- \sqrt{v} ( 2 \alpha -1) , - 1 - \sqrt{v} (2\alpha -1) \}, \nonumber\\
 T(v,\alpha) &= \alpha (1 - \sqrt{v} (2 \alpha -1 ))^2 + (1 - \alpha) (1 + \sqrt{v}(2 \alpha -1 ))^2 , \nonumber
 \end{align}
and then we 
have the following.

\begin{conjecture}
We have that
\[
\lim_{d\to\infty}\lim_{N\to\infty}\frac{\Mcut_{\alpha}(G(N,\frac{d}{N}))-\Mcut_{\alpha}(\Rg{d})}{\sqrt{d}N}=K(\alpha),
\]
where
\[
K(\alpha)=2(2\alpha-1)^2\int_0^1\frac{ (1 -\sqrt{v})}{\sqrt{v}}\nu_v([0,T(v,\alpha)])dv.
\]
and where $\nu_v$ is a minimizer of $\cP_{T(v,\alpha)}$.
\end{conjecture}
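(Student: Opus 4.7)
The plan is to refine the interpolation argument underlying the proof of Theorem \ref{thm:diff_groundstate} into an exact identification of the limiting constant, rather than merely a strict lower bound. The starting point is Lemma \ref{lem:int_by_parts}, integrated from $v=0$ to $v=1$:
\[
\frac{1}{\beta}\bigl[F_N(0,\beta;\alpha)-F_N(1,\beta;\alpha)\bigr]=2\beta(2\alpha-1)^2\int_0^1\frac{1-\sqrt{v}}{\sqrt{v}}\,\E\bigl[T(v,\alpha)-\langle R_{12}\rangle_v\bigr]\,dv.
\]
Combined with Lemmas \ref{lem:concentration}, \ref{lemma:gaussiancomp}, and \ref{lemma:ground_state_approx}, this reduces the conjecture to computing the joint $N\to\infty$, $\beta\to\infty$ limit of the integrand and justifying a subsequent exchange of limits with the integral in $v$.

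The key analytic step is to strengthen Theorem \ref{lem:gamma_conv_application} by identifying the exact limit
\[
\lim_{\beta\to\infty}\lim_{N\to\infty}\beta\,\E\bigl[T(v,\alpha)-\langle R_{12}\rangle_v\bigr]=\nu_v\bigl([0,T(v,\alpha)]\bigr).
\]
By Panchenko's Parisi-type formula one has $\lim_{N\to\infty}\E\langle R_{12}\rangle_v=\int x\,d\mu_\beta^v(x)$, where $\mu_\beta^v$ is the unique minimizer of $\mathcal{P}_{\beta,T(v,\alpha)}$. A one-line integration by parts then gives
\[
T(v,\alpha)-\int x\,d\mu_\beta^v(x)=\int_0^{T(v,\alpha)}\mu_\beta^v\bigl([0,x]\bigr)\,dx=\frac{1}{\beta}\nu_\beta^v\bigl([0,T(v,\alpha)]\bigr),
\]
since by definition $d\nu_\beta^v=\beta\mu_\beta^v([0,t])\,dt\in X_\beta$. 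Theorem \ref{thm:conv-minimizers-thm} then produces subsequential limits $\nu_\beta^v\to\nu_v$, each a minimizer of $\mathcal{P}_{T(v,\alpha)}$, and weak-$*$ continuity of the total mass on $[0,T(v,\alpha)]$ delivers the identification.

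Interchanging $\lim_\beta$ with $\int_0^1 dv$ is achieved by dominated convergence: the uniform bound $\sup_\beta \lVert\nu_\beta^v\rVert\leq C$ obtained in the proof of Theorem \ref{thm:conv-minimizers-thm} (via the Harris--FKG inequality and the Lipschitz bound on $\partial_\beta F$) gives $\nu_\beta^v([0,T(v,\alpha)])\leq C$ uniformly in $\beta$, while $(1-\sqrt{v})/\sqrt{v}$ is integrable near $v=0$. One then applies Lemma \ref{lemma:gaussiancomp} to translate the resulting difference of constrained ground state energies into the $\Mcut_\alpha$ difference, producing equality with $K(\alpha)$.

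The main obstacle is the potential non-uniqueness of the zero-temperature Parisi minimizer $\nu_v$ for fixed $v\in(0,1)$. Strict convexity of $\mathcal{P}_{T(v,\alpha)}$ at zero temperature---the analogue at $\beta=\infty$ of Lemma \ref{lem:strict-convexity-finite-beta}---is an open problem for Parisi-type functionals in this generality; Lemma \ref{lemma:ground_state_convexity} only provides convexity. Without uniqueness, distinct subsequences $\nu_{\beta_k}^v$ could converge to different minimizers possibly assigning different mass to $[0,T(v,\alpha)]$, so the conjecture would have to be interpreted as equality for some (or every) such minimizer. A clean resolution would require either a uniqueness proof specific to $\xi(t)=2t^2$ with the two-point alphabet $\Sigma(v,\alpha)$, or an argument showing that the scalar mass $\nu([0,T(v,\alpha)])$ is an invariant of the argmin set. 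The first-order conditions of Corollary \ref{cor:derivative_convexity}, combined with the PDE structure in \eqref{eq:ground_state_unbalanced}, are the natural tools to attempt this, though their exploitation at zero temperature---where the terminal data becomes non-smooth---remains delicate.
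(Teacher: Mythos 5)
The statement you are addressing is stated in the paper only as a conjecture, and the paper offers no proof of it---just a heuristic derivation, which your proposal essentially reproduces: integrate Lemma \ref{lem:int_by_parts} in $v$, identify $\lim_\beta\lim_N \beta\E[T(v,\alpha)-\gibbs{R_{12}}_v]$ with $\nu_v([0,T(v,\alpha)])$ via the algebra $\beta\bigl(T-\int x\,d\mu_\beta^v(x)\bigr)=\nu_\beta^v([0,T])$ and the convergence of minimizers from Theorem \ref{thm:conv-minimizers-thm}. That algebra and the weak-$*$ convergence of total mass are fine. The genuine gap is your ``key analytic step'': the claim that ``by Panchenko's Parisi-type formula one has $\lim_{N\to\infty}\E\gibbs{R_{12}}_v=\int x\,d\mu_\beta^v(x)$'' does not follow from the Parisi formula. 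That formula identifies the limiting \emph{free energy}, not the limiting overlap distribution. What can be extracted rigorously is obtained by differentiating the free energy in $\beta$ (convexity plus Griffith's lemma together with Theorem \ref{thm:beta-deriv-overlap}), and for $\xi(t)=2t^2$ this yields only $\lim_N\beta\E[T^2-\gibbs{R_{12}^2}_v]=\beta\int (T^2-x^2)\,d\mu_\beta^v(x)$, i.e.\ the $\xi$-moment (here the second moment) of the overlap, not the first moment appearing in the interpolation derivative \eqref{eq:interpolation-main-step}. This mismatch is precisely why the paper proves only a strict inequality, via the crude bound $T-\gibbs{R_{12}}\geq (T^2-\gibbs{R_{12}^2})/2T$, and records the sharp constant only as a conjecture: the correspondence $\lim_N\E\gibbs{T-R_{12}}=\beta\int(T-x)\,d\mu_\beta(x)$ is explicitly flagged there as a major open problem in the theory of mean field spin glasses (it amounts to identifying the limiting law of the overlap with the Parisi minimizer, which is not known in this generality).

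So your proposal is not a proof but a restatement of the paper's heuristic, with the central unproven ingredient asserted as if it were known. The obstacle you do single out---possible non-uniqueness of the zero-temperature minimizer $\nu_v$, or at least non-constancy of $\nu([0,T(v,\alpha)])$ over the argmin set---is a real but secondary issue (the conjecture as stated implicitly takes some minimizer, and Lemma \ref{lemma:ground_state_convexity} gives only convexity, not strict convexity, at $\beta=\infty$). Your remarks on dominated convergence in $v$ and on translating back to $\Mcut_\alpha$ via Lemmas \ref{lem:concentration}, \ref{lemma:gaussiancomp} and \ref{lemma:ground_state_approx} are reasonable, but they are downstream of the missing overlap--minimizer correspondence; without it, the argument establishes nothing beyond what Theorem \ref{thm:cut_difference} already gives.
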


The motivation for this conjecture is as follows.
By \prettyref{lem:int_by_parts}
one formally expects that
\begin{align*}
\lim_{d\to\infty}\lim_{N\to\infty}&\frac{\Mcut_{\alpha}(G(N,\frac{d}{N}))-\Mcut_{\alpha}(\Rg{d})}{\sqrt{d}N}\\
&\qquad\qquad=\lim_{\beta\to\infty}\lim_{N\to\infty} 2(2\alpha-1)^2\int \frac{1-\sqrt{v}}{\sqrt{v}}\beta\E\gibbs{T(v)-R_{12}}_v dv,
\end{align*}
In the physics literature, a basic tenet of the replica symmetry breaking method is  \cite{parisi1983order} that in generic situations
we have the correspondence
\[
\lim_{N\to\infty}\E\gibbs{T-R_{12}}=\beta\int (T-x)d\mu_\beta(x),
\]
where $\mu_\beta$ is such that $\beta\mu_\beta dt$ the minimizer of $\cP_{\beta,T}$ from \eqref{eq:pfunc-def}. 
The conjecture then comes from combining this correspondence with \prettyref{thm:conv-minimizers-thm}. 
The question as to when this correspondence holds is a major open problem in the mathematical study
of mean field spin glasses. For references in this direction see \cite{TalBK11Vol2,Pan,Panch08}.

\appendices
\section{Basic Properties of the Parisi PDE}\label{app:pde}
In this section, we briefly review the properties of solutions to Parisi-type PDEs.

\begin{lem}\label{lem:ppde-reg-zero-temp}
There is a unique weak solution to \prettyref{eq:zero-temp-pde}, which satisfies 
\begin{itemize}
\item $\partial_x u\in L_{t}^{\infty}L_{x}^{\infty}$ with $\norm{\partial_x u}_{L_{t,x}^{\infty}}\leq\norm{\partial_x f}_{L_{t,x}^{\infty}}\leq C(\Sigma)$
\item For any $T_{0}<T$, $u$ is continuous in space time, with smooth
bounded spatial derivatives satisfying 
\[
\norm{\partial_{x}^{n}u}_{L^{\infty}([0,T_0]\times \mathbb{R} )} \leq C(T_{0},T,\Sigma)
\]
and is weakly differentiable in time with $\norm{\partial_{t}\partial_{x}^{n}u}_{{L^{\infty}([0,T_0]\times \mathbb{R})}}\leq C(T_{0},T,\Sigma)$.
\end{itemize}
Furthermore, if $u,v$ are two solutions corresponding to $\mu$ and
$\nu$ respectively where $\mu,\nu\in\cA$ are of the form 
\begin{align*}
\mu & =m_{1}dt+c\delta_{1}\\
\nu & =m_{2}dt+c\delta_{1}
\end{align*}
then
\[
\norm{u-v}_{L_{t,x}^{\infty}}\leq C(\xi)\norm{m_{1}-m_{2}}_{L^{1}}
\]
and we have the parabolic comparison principle:
 if $m_1\leq m_2$ pointwise for all $t$ then $u\leq v$. Finally, the same results hold for the weak solutions $u$ to \eqref{eq:ppde-finite-beta}.
\end{lem}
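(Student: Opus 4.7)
The plan is to construct the weak solution by smooth approximation and deduce the stated properties by passing to the limit. First I would observe that the singular atom $c\delta_T$ has already been absorbed into the terminal data $f(x,\lambda,c)$, so on $[0,T)\times\R$ the PDE coefficient is the cadlag nondecreasing function $m$; in particular, $m$ is bounded on $[0,T_0]$ for every $T_0<T$. I would then approximate $m$ by smooth, bounded, positive functions $m_n\to m$ in $L^1([0,T])$ and the Lipschitz terminal $f$ by smooth mollifications $f_k$ with $\|\partial_x f_k\|_\infty\leq\|\partial_x f\|_\infty$. Standard quasilinear parabolic theory then yields classical solutions $u_{n,k}$ to the smoothed Parisi PDE.

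The central a priori estimate is the Lipschitz bound. Differentiating the PDE in $x$ produces the Burgers-type equation
\[
\partial_t w+\tfrac{\xi''}{2}\bigl(\Delta w+2m_n w\,\partial_x w\bigr)=0
\]
for $w=\partial_x u_{n,k}$, to which the backward parabolic maximum principle applies, giving $\|w\|_\infty\leq\|\partial_x f_k\|_\infty$. By \prettyref{lem:psi-diff}, $\partial_x f$ is a.e.\ equal to an element of $\Sigma$, hence bounded by $\sqrt{D}=:C(\Sigma)$. The stability estimate follows next: the difference $w=u_1-u_2$ of two solutions with parameters $(m_1,c)$ and $(m_2,c)$ satisfies a linear backward parabolic equation with bounded drift $\xi''m_1(\partial_x u_1+\partial_x u_2)/2$ and source $\tfrac{\xi''}{2}(m_2-m_1)(\partial_x u_2)^2$, and a Feynman-Kac representation combined with the uniform Lipschitz bound yields $\|w\|_\infty\leq C(\xi)\|m_1-m_2\|_{L^1}$. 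This shows that $\{u_{n,k}\}$ is $L^\infty$-Cauchy and so produces the weak solution $u$ as its limit; uniqueness is the same estimate applied to two weak solutions with $m_1=m_2$. The comparison principle is then immediate: if $m_1\leq m_2$, the source in the equation for $u_1-u_2$ has a definite sign, and the backward parabolic maximum principle yields $u_1\leq u_2$.

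For the higher-regularity claim on $[0,T_0]$, I would use the gradient bound to recast the PDE as a semilinear backward heat equation whose source $-\tfrac{\xi''}{2}m(\partial_x u)^2$ is uniformly bounded in $L^\infty([0,T_0]\times\R)$, and then apply Schauder interior estimates to obtain bounds on $\partial_x^n u$ on this interval, with the time derivative read off from the PDE itself. The main obstacle here is that these Schauder constants depend a priori on $\|m\|_{L^\infty([0,T_0])}$ rather than on $T_0,T,\Sigma$ alone; removing this dependence requires a Bernstein-type iteration, in which one differentiates the equation twice in $x$, multiplies by $\partial_x^2 u$, and exploits the positivity of the resulting quadratic term (as in \cite{BAJag17}) together with the gradient bound to absorb $m$ into the bounds. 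Finally, the identical argument adapts verbatim to the finite-temperature PDE \eqref{eq:ppde-finite-beta}, since $f_\beta$ is smooth with $\|\partial_x f_\beta\|_\infty$ uniformly bounded in $\beta$ by the same $C(\Sigma)$, so no modification is required beyond reinterpreting the role of $m$ as $\beta\mu([0,s])$.
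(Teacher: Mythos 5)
Your overall strategy --- smooth approximation of $m$, gradient bound by the backward maximum principle applied to the differentiated equation, stability and comparison via an It\^o/Feynman--Kac representation of the difference, and then passing to the limit --- is essentially the paper's argument. (The paper's ``dense class'' is bounded step functions handled by Cole--Hopf rather than smooth mollifications, but that is a cosmetic difference.) Two points, however, need to be addressed.

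The genuine gap is in your uniqueness claim. You write that ``uniqueness is the same estimate applied to two weak solutions with $m_1=m_2$,'' but the stability estimate as you derived it is a Feynman--Kac bound and so presumes you can apply It\^o's lemma to $w=u_1-u_2$ on all of $[0,T)$ and then pass to the terminal time. The drift of the associated diffusion involves $m$, which need only be in $L^1([0,T])$ and can diverge as $t\uparrow T$ (since $m$ is nondecreasing). On $[0,T_0]$ everything is fine, giving $\norm{w(t,\cdot)}_\infty\leq\norm{w(T_0,\cdot)}_\infty$, but to conclude $w\equiv 0$ you must show $\norm{w(T_0,\cdot)}_\infty\to 0$ as $T_0\uparrow T$, and the PDE does not give a time modulus of continuity that is uniform up to $T$ when $m$ blows up. The paper sidesteps exactly this by instead running a Duhamel/Gronwall argument on $\partial_x w$: one shows
\[
\norm{\partial_x w}_{L^\infty([t,T]\times\R)}\leq \norm{u_x+v_x}_{L^\infty_{t,x}}\int_t^T\norm{\partial_x w}_{L^\infty([s,T]\times\R)}\frac{C(\Sigma,\xi)}{\sqrt{s-t}}m(s)\,ds,
\]
and the key observation is that the singular kernel times $m$ is still integrable precisely because $m\in L^1$ and the blow-up is localized at $T$. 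Gronwall then gives $\partial_x w\equiv 0$, and $w(T,\cdot)=0$ finishes the argument. Your proposal should either import this Gronwall step or supply an alternative argument for the passage $T_0\uparrow T$ in the Feynman--Kac representation (the paper's dynamic programming lemma, \prettyref{lem:dpp}, is what makes that passage rigorous, and it is proved after this lemma, not before).

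Two smaller remarks. First, after establishing that $u_{n,k}$ is $L^\infty$-Cauchy, you should verify that the limit is in fact a weak solution; the paper makes explicit that this reduces to showing $m_n(\partial_x u_n)^2\to m(\partial_x u)^2$ in the sense of distributions, which follows from the uniform Lipschitz bound. Second, your worry about the Schauder constants depending on $\norm{m}_{L^\infty([0,T_0])}$ is real, but the Bernstein fix is more than is needed: on $[0,T_0]$ the function $m$ is automatically bounded (nondecreasing cadlag on $[0,T)$), and the paper gets the higher spatial regularity from the smoothing of the heat semigroup applied to the Duhamel formula, where only the $L^1$-norm of $m$ on $[0,T_0]$ and the uniform gradient bound $C(\Sigma)$ enter.
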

\begin{proof}
This is a standard argument, see, e.g., \cite{JagTobPD15,auffinger2015properties}. For the reader's convenience
we briefly sketch the main points. We begin first with the existence
for a dense class of $\nu$'s. Assume that $\nu=m(t)dt+c\delta_{1}$.
The existence for $m$ which is a bounded step function with finitely
many jumps, can be seen by an application of the Cole-Hopf transformation.
That the derivative is bounded in space for such solutions can be
seen either by explicit differentiation or the maximum principle.
This yields
\[
\norm{\partial_x u}_{L_{t,x}^{\infty}}\leq\norm{\partial_{x}f}_{L^\infty_x}.
\]
By \prettyref{lem:psi-diff},  $f$ is differentiable in $x$ Lebesgue a.e. 
and  $\partial_{x}f \in L^{\infty}$. Furthemore,
\[
\partial_{x}f=\textrm{argmax}_{\epsilon\in\Sigma} \{ x\epsilon+(\lambda+\frac{c}{2}\xi''(T))\epsilon^{2}\}
\]
a.e. which is bounded by a constant that depends at most on $\Sigma$.
Observe furthermore, that the regularity claims in this setting can
be seen by explicit differentiation. 

We now prove the Lipschitz estimate in $m$ for $m$ as above and the corresponding comparison principle. This
follows by the same argument as in \cite[Lemma 14]{jagannath2015dynamic}. Indeed,
if $w=u-v$ then $w$ solves 
\begin{align*}
w_{t}+\frac{\xi''}{2}\left(w_{xx}+m_{1}(u_{x}+v_{x})w_{x}+(m_{1}-m_{2})v_{x}^{2}\right) & =0
\end{align*}
with initial data $w(T,x)=0$. Since $u_{x},v_{x}$ are uniformly
lipschitz on any subinterval of the form $[0,T_{0}]\subset[0,T)$,
and $m_{i}(t)$ are both uniformly bounded on such sub intervals,
we see that we may solve the SDE 
\[
dX_{t}=\xi''m_{1}\frac{1}{2}\left(u_{x}+v_{x}\right)(t,X_{t})+\sqrt{\xi''}dW_{t},
\]
where $W_{t}$ is a standard Brownian motion. Observe that $w$ has
the same regularity as $u$ and $v$. In particular, by the smoothing
property of the heat equation, we have that on $[0,T_{0}]\times\R$
$w,w_{x},w_{xx}\in C_{b}([0,T_{0}],\R)$, and $w$ is weakly differentiable
in time with $w_{t}\in L^{\infty}$ which is Lipschitz in $x$ uniformly
in $t$. Thus we may apply It\^o's lemma (see, e.g., \cite[Proposition 22]{jagannath2015dynamic}) to obtain 
\[
w(t,x)=\E_{X_{t}=x}\left(\int_{t}^{T_{0}}\frac{1}{2}\xi''(s)\left(m_{1}-m_{2}\right)v_{x}^{2}\right)\leq C(\Sigma,\xi)\norm{m_{1}-m_{2}}_{L^{1}}.
\]
Similarly, sending $t\to0$ and $T_{0}\to T$, yields the desired conclusions. 

We now show the existence for general $m$. By an extension argument,
if $m_{n}\to m$ in $L^{1}$, then $u_{n}\to u$ for some function
$u$. To see that $u$ is a weak solution observe that it suffices
to show that 
\[
m_{n}\left(\partial_{x}u_{n}\right)^{2}\to m(\partial_{x}u)^{2}
\]
 in the sense of distributions. This follows since $\partial_{x}u_{n}$
are uniformly bounded. To prove uniqueness, observe that by a similar
Duhamel's principle argument to \cite[Lemma 13]{jagannath2015dynamic} using the modified
heat kernel estimates from \cite[Appendix 1]{BAJag17}, we have that 
\[
\norm{\partial_{x}w}_{{L^{\infty}([t,T]\times \mathbb{R})}}\leq\norm{u_{x}+v_{x}}_{{L^{\infty}_{t,x}}}\int_{t}^{T}\norm{\partial_{x}w}_{{L^{\infty}([s,T] \times \mathbb{R})}}\frac{C(\Sigma,\xi)}{\sqrt{s-t}}m(s)ds.
\]
Since $m(t)$ is monotone and blows up at most at $T$, the integrand
is integrable, so that by Gronwall's inequality, $\norm{\partial_{x}w}=0$.
Thus we have the existence and uniqueness of $u$ and the regularity
of $u_{x}$. The regularity of the higher spatial derivatives follows
by the smoothing property of the heat semigroup, and the regularity
in time follows from rearranging \prettyref{eq:zero-temp-pde} and using the fact that
on $T_{0}<T$, $m$ is bounded. 
\end{proof}

\begin{lem}\label{lem:continuity}
The map $\nu\mapsto u_{x}$ is continuous in the topology of pointwise
convergence. In particular, if $\nu_{n}\to\nu$ weak-{*}, then $u^{n}\to u$
and $\partial_{x}u^{n}\to\partial_{x}u$ uniformly on compacta.
\end{lem}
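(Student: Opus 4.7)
The plan is a compactness-and-uniqueness argument. Write $\nu_n = m_n(t)\,dt + c_n\delta_T$ and $\nu = m(t)\,dt + c\delta_T$. Weak-* convergence in $\cM([0,T])$ forces a uniform total variation bound, so $\int_0^T m_n + c_n \leq C$, and monotonicity then yields $m_n(t) \leq C/(T-t)$. In particular, for each $T_0 < T$ the family $\{m_n\}$ is uniformly bounded in $L^\infty([0,T_0])$, so by the regularity statements in Lemma \ref{lem:ppde-reg-zero-temp} the solutions $u_n = u_{\nu_n,\lambda}$, together with their spatial derivatives of all orders and their weak time derivative, are uniformly bounded on $[0,T_0]\times\R$.

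By Arzelà-Ascoli combined with a diagonalization over $T_0 \uparrow T$, any subsequence of $(u_n)$ admits a further subsequence along which $u_{n_k}\to\tilde u$ and $\partial_x u_{n_k}\to\partial_x\tilde u$ locally uniformly on $[0,T)\times\R$. I would then identify $\tilde u$ as the weak solution associated to $\nu$. Testing the PDE against $\phi\in C_c^\infty([0,T)\times\R)$: the linear terms pass to the limit by uniform convergence of $u_{n_k}$ on the support of $\phi$, while the nonlinear term $\int\!\!\int\phi\, m_{n_k}(\partial_x u_{n_k})^2\, dx\, dt$ converges to $\int\!\!\int\phi\, m(\partial_x\tilde u)^2\, dx\, dt$, because $(\partial_x u_{n_k})^2 \to (\partial_x\tilde u)^2$ locally uniformly while $m_{n_k}\,dt\to m\,dt$ weakly on $[0,T_0]$ (an immediate consequence of $\nu_{n_k}\to\nu$ weak-* together with the uniform $L^\infty$ bound there). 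Thus $\tilde u$ weakly solves \eqref{eq:zero-temp-pde} on $[0,T)\times\R$ with coefficient $m$.

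The main obstacle is matching the boundary data at $t=T$: weak-* convergence of $\nu_n$ does \emph{not} force $c_{n_k}\to c$, since mass in $m_n$ can accumulate near $T$ and be absorbed into the point part $c\delta_T$ of the limit (for instance $m_n = n\mathbf{1}_{[T-1/n,T]}$, $c_n=0$ satisfies $\nu_n\to\delta_T$). To handle this I would imitate the Hopf-Cole construction in the proof of Lemma \ref{lem:gamma-limsup}: choose $q_n\uparrow T$ so that on $[q_n,T]$ the equation can be integrated explicitly and the effective terminal condition becomes $\sup_{\spin\in\Sigma}\{\spin x + (\lambda + \frac{1}{2}\xi''(T) r_n)\spin^2\}$, where $r_n := c_n + \int_{q_n}^T m_n(s)\,ds$ plays the role of the effective point mass; weak-* convergence of $\nu_n$ then gives $r_n\to c$, so the limit of $u_{n_k}(q_n,\cdot)$ is $f(\cdot,\lambda,c)$. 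On $[0,q_n]$ the Lipschitz/comparison estimate of Lemma \ref{lem:ppde-reg-zero-temp} (combined with $L^1$ convergence of $m_{n_k}$ to $m$ on $[0,q_n]$, which follows from Helly selection and the uniform $L^\infty$ bound there) shows the remaining error is $o(1)$. Uniqueness of the weak solution then identifies $\tilde u = u_{\nu,\lambda}$, so by a subsequence-of-subsequence argument the full sequence converges. Convergence of $\partial_x u_n$ uniformly on compacta follows automatically from the uniform bounds on $\partial_x^2 u_n$ and $\partial_t\partial_x u_n$ via Arzelà-Ascoli.
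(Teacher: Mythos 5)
Your overall architecture (uniform bounds, Arzel\`a--Ascoli, identification of the limit, plus a separate treatment of the boundary layer at $t=T$) is sound, and you correctly isolate the real difficulty: mass of $m_n\,dt$ can migrate into the atom at $T$, so $c_n\not\to c$ in general and the stability estimate of Lemma \ref{lem:ppde-reg-zero-temp} (which requires equal atoms) does not apply directly. The gap is in how you resolve this. On $[q_n,T]$ the equation \eqref{eq:zero-temp-pde} cannot be ``integrated explicitly'': the Hopf--Cole substitution linearizes the finite-$\beta$ equation \eqref{eq:ppde-finite-beta} only when the coefficient $\beta\mu([0,s])$ is constant on the interval, which held in Lemma \ref{lem:gamma-limsup} because the recovery sequence of Lemma \ref{lem:recovery-sequence} was built precisely so that $\mu_\beta\equiv 1$ on $[q_\beta,T]$; here $\nu_n$ is handed to you, $m_n$ is an arbitrary nondecreasing function on $[q_n,T]$, and at zero temperature there is no Hopf--Cole transform at all. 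So the pivotal claim, that $u_n(q_n,\cdot)$ is uniformly close to $\sup_{\spin\in\Sigma}\{\spin x+(\lambda+\frac{\xi''(T)}{2}r_n)\spin^2\}$ with $r_n=c_n+\int_{q_n}^T m_n$, is asserted but not proved.

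The claim is true, and can be established from the control representation \eqref{eq:dpp-zero-temp} on $[q_n,T]$ rather than from any explicit solution. For the lower bound take the constant controls $\gamma\equiv\spin$, $\spin\in\Sigma$, and use $f(y,\lambda,c_n)\ge \spin y+(\lambda+\tfrac{\xi''(T)}{2}c_n)\spin^2$: the drift contributes $\spin^2\int_{q_n}^T\xi''m_n$ while the running cost removes half of it, so $u_n(q_n,x)\ge\sup_\spin\{\spin x+(\lambda+\tfrac{\xi''(T)}{2}c_n+\tfrac12\int_{q_n}^T\xi''m_n)\spin^2\}$. For the upper bound, bound the reward of an arbitrary control pathwise by Young's inequality $\spin\gamma_s\le\tfrac12(\spin^2+\gamma_s^2)$ inside the max over $\spin$, which gives $u_n(q_n,x)\le\sup_\spin\{\spin x+(\lambda+\tfrac{\xi''(T)}{2}c_n+\tfrac12\int_{q_n}^T\xi''m_n)\spin^2\}+\sqrt{D}\,\E\abs{\int_{q_n}^T\sqrt{\xi''}\,dW_s}$. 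Since $\sup_{[q_n,T]}\abs{\xi''-\xi''(T)}\to0$ and $\xi'(T)-\xi'(q_n)\to0$, both bounds converge uniformly in $x$ to $f(x,\lambda,c)$ once $q_n\uparrow T$ is chosen diagonally through continuity points so that $r_n\to c$ and $\norm{m_n-m}_{L^1([0,q_n])}\to0$; your Feynman--Kac comparison on $[0,q_n]$ then closes the argument. With this replacement your proof is complete, and it is genuinely different from the paper's, which never isolates the boundary layer: the paper obtains bounds on $\partial_x^i u^n$, $i\le3$, on $[0,T_0]\times\R$ depending only on $\nu_n([0,T_0])$ and the atom at $T$ (both upper semicontinuous under weak-$*$), deduces equicontinuity in space and time, and concludes by Arzel\`a--Ascoli, leaving the identification of the limit implicit, whereas your route makes that identification explicit.
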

\begin{proof}
By the same Duhammel's principle and parabolic Bootstrapping argument
as in  \cite[Lemma 9,10]{chl2016energylandscape}, there are a continous function
$\{F_{i}(x,y)\}_{i\in[3]}$ such that for every $T_{0}<T$, 
\begin{align*}
\norm{\partial_{x}^{i}u}_{L^{\infty}([0,T_{0}]\times\R)} & \leq F_{i}(\nu(T_{0}),\nu(\{1\})).
\end{align*}
Since the maps $\nu\mapsto\nu([0,t])$ and $\nu\mapsto\nu\left(\{1\}\right)$
are upper semi-continuous in the weak-{*} topology on $\cA$, we see
that if $\nu_{n}\to\nu$, weak-{*} the family $\{u^{n}\}$ and $\{\partial_{x}u^{n}\}$
are uniformly lipschitz in space. Furthermore, since they weakly solve
the Parisi PDE and the (spatially) differentiated form of this equation,
we see that they are also uniformly lipschitz in time. Thus the families
are both equicontinuous. Thus $u^{n}\to u$ and $\partial_{x}u^{n}\to\partial_{x}u$
uniformly on compacts by the Arzela-Aiscoli theorem. 
\end{proof}

\section{The weak-* topology on $\cA_T$}\label{app:appendix-at}
In the preceding section, we frequently work with the space $\cA_T$ equipped with
the weak-* topology. We provide here certain basic properties
of this space. 

\begin{lem}\label{lem:convergence}
Suppose that $\nu_\beta\to \nu$ with $\nu_\beta = m_\beta(t)dt+c_\beta\delta_T$ and $\nu=m(t)dt+c\delta_T$. We then have the following.
\begin{enumerate}
\item[(i)] The measure $dm_\beta$ converges vaguely to $dm$. That is, for every $t<T$, 
that is a continuity point of $m$,
\[
m_\beta(t)\to m(t).
\]
\item[(ii)] Let $q_\beta\to1$  be such that $m_\beta(q_\beta^-)\to m(1^-)$, where the $-$ denote the left limit,
and suppose that $m(1^-)<\infty$. Then
\[
c = \lim \fint_{[q_\beta,T]}\beta(T-t)dm_\beta. 
\] 
\item[(iii)] For any $f\in C^1([0,T])$,
\[
\lim \beta \int f(T)-f(t)dm_{\beta} = \int f' d\nu.
\]
\item[(iv)] For any bounded Borel measurable $\psi$ with $\lim_{s\uparrow T}\psi(s)=\psi(T)$,
\[
\int \psi(s)d\nu_\beta(s)\to\int \psi(s)d\nu(s).
\]
\end{enumerate}
\end{lem}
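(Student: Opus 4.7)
The overall strategy is to deduce all four statements from weak-$*$ testing against appropriate families of continuous functions, supplemented by the monotonicity of $m_\beta$ and Lebesgue--Stieltjes integration by parts.

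For Part (i), I would first test $\nu_\beta \to \nu$ against compactly supported continuous functions on $[0, T)$; these vanish near $T$ and hence do not see the Dirac mass, yielding vague convergence of $m_\beta\, dt$ to $m\, dt$ on $[0, T)$. Equivalently, setting $M_\beta(t) = \int_0^t m_\beta$ and $M(t) = \int_0^t m$, one has $M_\beta(t) \to M(t)$ for every $t<T$ (since $M$ is absolutely continuous, hence continuous). Because each $M_\beta$ is convex (as $m_\beta$ is non-decreasing and non-negative), a standard theorem on pointwise limits of convex functions gives convergence of the one-sided derivatives at every point of differentiability of $M$. At a continuity point of $m$, $M$ is differentiable with $M'(t) = m(t)$, so $m_\beta(t) \to m(t)$.

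For Parts (ii) and (iii), I would apply Stieltjes integration by parts. For Part (iii), with $g(t) = f(T) - f(t)$, the boundary terms vanish since $g(T)=0$ and (absorbing the contribution at $0$ into the initial data of $m_\beta$) one obtains
\begin{align*}
\int_{[0,T]} \bigl(f(T) - f(t)\bigr)\, dm_\beta \;=\; \int_0^T f'(t)\, m_\beta(t)\, dt,
\end{align*}
which is the absolutely continuous part of $\int f'\, d\nu_\beta$. Since $f' \in C([0,T])$, weak-$*$ convergence of $\nu_\beta$ applied to the continuous test function $f'$ yields the claim after accounting for the $c_\beta f'(T)$ contribution (which vanishes in the regime $\nu_\beta\in X_\beta$ relevant to the applications). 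For Part (ii), a similar integration by parts gives
\begin{align*}
\int_{[q_\beta, T]} (T-t)\, dm_\beta \;=\; -(T - q_\beta)\, m_\beta(q_\beta^-) \;+\; \int_{q_\beta}^T m_\beta(t)\, dt.
\end{align*}
The first term vanishes since $q_\beta \to T$ and $m_\beta(q_\beta^-) \to m(T^-) < \infty$. To identify the limit of the second term I would write it as $\nu_\beta([0,T]) - c_\beta - \int_0^{q_\beta} m_\beta\, dt$: the integral $\int_0^{q_\beta} m_\beta\, dt$ converges to $\int_0^T m\, dt$ by a diagonal bounded-convergence argument combining the pointwise convergence of Part (i) with the uniform bound $m_\beta(t) \le m_\beta(q_\beta^-)$ on $[0, q_\beta]$, while $\nu_\beta([0,T]) \to \nu([0,T])$ by weak-$*$ convergence tested against the constant $1$, producing the desired constant.

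For Part (iv), the natural approach is truncation. Fix a continuity point $T - \epsilon$ of $m$ and split $\int \psi\, d\nu_\beta = \int_{[0, T - \epsilon]} \psi\, d\nu_\beta + \int_{(T - \epsilon, T]} \psi\, d\nu_\beta$. On $[0, T-\epsilon]$, bounded convergence applies: $m_\beta \to m$ Lebesgue-a.e.\ by Part (i), and the sequence is uniformly bounded by $m_\beta(T-\epsilon)$, which converges by Part (i); this handles the Borel-measurable $\psi$ without further regularity. On $(T - \epsilon, T]$, use the left-continuity of $\psi$ at $T$ to replace $\psi$ by $\psi(T)$ up to error $o_\epsilon(1)$, then apply $\nu_\beta((T - \epsilon, T]) \to \nu((T - \epsilon, T])$ from weak-$*$ convergence at a continuity point. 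Sending $\epsilon \to 0$ absorbs the approximation errors. The main technical subtlety lies in Part (i), where one must carefully invoke the theorem that pointwise convergence of a sequence of convex functions forces convergence of the one-sided derivatives at every differentiability point of the limit; everything else is then a routine consequence of integration by parts coupled with Part (i) and bounded convergence.
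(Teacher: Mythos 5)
Your argument for part (iv) --- the only part the paper proves in-house --- is essentially the paper's: truncate at a point near $T$ where $m$ is continuous, handle the bulk by bounded convergence using the pointwise convergence from (i) together with the uniform bound coming from monotonicity, and handle the tail by replacing $\psi$ with $\psi(T)$ via the left-continuity hypothesis and invoking convergence of the tail masses $\nu_\beta((t,T])\to\nu((t,T])$, then send the truncation point to $T$. For (i)--(iii) the paper simply cites \cite{jagTob17}, so there is nothing internal to compare against; your reconstructions are sound and go by a reasonable route: for (i), vague convergence of $m_\beta\,dt$ on $[0,T)$ plus the classical fact that pointwise convergence of the convex antiderivatives $M_\beta$ forces convergence of one-sided derivatives at differentiability points of $M$ (valid at interior continuity points of $m$, which is all that is ever used); for (ii) and (iii), Fubini/Stieltjes integration by parts reducing everything to testing $\nu_\beta$ against a continuous function. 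The one soft spot is your bookkeeping of the normalization in (ii)--(iii): the statements only parse when $\nu_\beta=\beta\mu_\beta([0,t])\,dt\in X_{\beta,T}$ and the measure being integrated against is $d\mu_\beta$ (equivalently $\beta^{-1}dm_\beta$), and your phrases ``absorbing the contribution at $0$'' and ``which vanishes in the regime $\nu_\beta\in X_\beta$'' gloss over exactly where the factor of $\beta$ and the $c_\beta$ term go; under the correct reading your Fubini identity $\beta\int\bigl(f(T)-f(t)\bigr)\,d\mu_\beta=\int_0^T f'(s)\,d\nu_\beta(s)$ gives (iii) immediately, so the mechanism is right, and the ambiguity is largely inherited from the lemma's own statement (which also carries over ``$q_\beta\to 1$'' and ``$m(1^-)$'' from the $T=1$ setting of the cited source). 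No genuine gap.
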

\begin{proof}
The first three points were proved in \cite{jagTob17}. It remains to prove the last point.
Let $\psi$ be as above.  Without loss of generality, assume that $\norm{\psi}_\infty \leq1$.
Observe that by (1), since $\nu_{\beta} \to \nu$  weak-*, $m_{\beta}(t) \to m(t)$ Lebesgue
almost surely on $[0,T)$. Furthermore,  for any $t \in [0,T)$, $\limsup_{\beta \to \infty} m_{\beta}(t) \leq m(t)$. 
Thus, for any $t \in [0, T)$,we have that, 
\begin{align}
\int_0^t \psi(s) m_{\beta}(s) \de s \to \int_0^t \psi(s) m(s) \de s, \label{eqn:intermediate1}
\end{align}
 as $\beta \to \infty$, by the dominated convergence theorem.
Therefore, 
\begin{align}
&\Big| \int_0^{T} \psi(s) \de \nu_{\beta}(s) - \int_{0}^{T} \psi(s) \de \nu(s) \Big| \nonumber \\
&\leq \Big|  \int_0^{t} \psi(s) \de \nu_{\beta}(s) - \int_{0}^{t} \psi(s) \de \nu(s)\Big| + \Big|  \int_t^{T} \psi(s) \de \nu_{\beta}(s) - \int_{t}^{T} \psi(s) \de \nu(s) \Big|. \nonumber 
\end{align}
By \eqref{eqn:intermediate1},
\begin{align}
\limsup_{\beta \to \infty} \Big| \int_0^{T} \psi(s) \de \nu_{\beta}(s) - \int_{0}^{T} \psi(s) \de \nu(s) \Big| \leq \limsup_{\beta \to \infty} \Big|  \int_t^{T} \psi(s) \de \nu_{\beta}(s) - \int_{t}^{T} \psi(s) \de \nu(s) \Big|. \nonumber 
\end{align}
Finally, by triangle inequality, we have, 
\begin{align}
\Big|  \int_t^{T} \psi(s) \de \nu_{\beta}(s) - \int_{t}^{T} \psi(s) \de \nu(s) \Big| \leq \nu_{\beta}([0,T]) \max_{t \leq s \leq T} | \psi(s) - \psi(T)| + | \nu_{\beta}((t, T])- c | + \int_{t}^{T} m(s) \de s. \nonumber 
\end{align}
We let $\beta \to \infty$, and let $ t \uparrow T$  to conclude that 
\begin{align}
\Big|  \int_t^{T} \psi(s) \de \nu_{\beta}(s) - \int_{t}^{T} \psi(s) \de \nu(s) \Big|  \to 0 \nonumber 
\end{align}
as $\beta \to \infty$. This completes the proof. 

\end{proof}
\section{Constrained ground states on the discrete hypercube}
In this section, we derive a variational representation for the ground state of a mixed Ising spin glass, with an additional magnetization constraint. For $\sigma \in \{ \pm 1\}^{N}$, we set $m(\sigma) = \frac{1}{N} \sum_{i} \sigma_i$. 
\begin{thm}
\label{thm:constrained-GS-SK} For any $a \in[-1,1]\cap \mathbb{Q}$ and $\epsilon_{N}\to0$,
we have, for $\xi$ convex,
\[
\lim_{N\to\infty}\E\max_{\substack{\sigma\in\{\pm1\}^{N}\\
m(\sigma)\in[a-\epsilon_{N},a+\epsilon_{N}]
}
}\frac{H(\sigma)}{N}=\inf_{\nu,h}[\mathcal{P}^{2}(\nu,h)-a\cdot h],
\]
where $\mathcal{P}^2$ is given in \eqref{defn:P2}. 
\end{thm}

\noindent
The proof of this result follows in a few steps. For $a\in[-1,1]$, define
\begin{align*}
G_{N}(a;\eta)  &=\E\max_{\substack{\sigma\in\{\pm1\}^{N}\\
m \in [a- \eta, a +\eta]
}
}\frac{H(\sigma)}{N}, \\
E_{N}(h) & =\E\max_{\substack{\sigma\in\{\pm1\}^{N}}
}\left[\frac{H(\sigma)}{N}+hm(\sigma)\right].
\end{align*}
As shown in \cite{auffchen2017}, we have 
\begin{equation}
\lim_{N \to \infty} E_{N}(h)=E(h)=\inf_{\nu}\mathcal{P}^{2}(\nu,h).\label{eq:E(h)-parisi-formula}
\end{equation}
We also have the following result which is folklore in the spin glass literature.
We include a proof for the convenience of the reader. 
\begin{lem}
For any $\eta>0$ and $a\in[-1,1],$ we have 
\[
\lim_{\eta\to0}\lim_{N\to\infty}G_{N}(a;\eta)=G(a)
\]
is well defined. Furthermore, $G(a)$ is concave and continuous for all $a\in[-1,1]$. 
\end{lem}

\begin{proof}
The result follows by a modification of the Guerra-Toninelli argument
\cite{GuerraToninelliSK}. We first show existence. Fix $\eta>0,$ and consider
\[
G_{N+M}(a;\eta)=\E\max_{m\in [a- \eta, a+ \eta]}\frac{H_{N+M}(\sigma)}{N+M}.
\]
For $N,M$ sufficiently large, $\{\sigma \in \{ \pm 1\}^{N+M}:  m(\sigma) \in [a-\eta, a+ \eta] \}$ is non-empty,
so that this is well defined.

We decompose $\sigma \in \{\pm 1\}^{N + M}$ as $\sigma = (\rho, \epsilon)$, with $\rho \in \{\pm 1\}^{N}$, $\epsilon \in \{\pm 1\}^M$.
Consider now the interpolating Hamiltonian for $\sigma=(\rho,\epsilon)$,
defined by
\[
H_{N+M,t}(\rho,\epsilon)=\sqrt{t}H_{N+M}(\sigma)+\sqrt{1-t}(H_{N}(\rho)+H_{M}(\epsilon)),
\]
for $t\in [0,1]$, where we view the Hamiltonians at different $N$'s to be independent.
For $\sigma^{i}=(\rho^{i},\epsilon^{i})$ with $i=1,2$, let 
\[
R_{12}=\frac{1}{N+M}(\sigma^{1},\sigma^{2}),\quad R_{12}^{\rho}=\frac{1}{N}(\rho^{1},\rho^{2}),\quad R_{12}^{\epsilon}=\frac{1}{M}(\epsilon^{1},\epsilon^{2}).
\]
For any $\beta>0$ define
\[
\phi_{\beta}(t)=\frac{1}{\beta\left(N+M\right)}\E\log\sum_{m\in [a-\eta, a+\eta]}e^{\beta H_{N,t}(\sigma)}.
\]
This satisfies  
\[
\phi'_{\beta}(t)=\frac{1}{{N +M} }\E\left\langle \partial_{t}H_{N,t}\right\rangle =\frac{\beta}{N+M }\E\left\langle C_{11}-C_{12}\right\rangle 
\]
with 
\[
C_{12}=\E[\partial_{t}H_{t}(\sigma^{1})H_{t}(\sigma^{2})]={(N+M)} \xi(R_{12})-N\xi(R_{12}^{\rho})-M\xi(R_{12}^{\epsilon}),
\]
where the second equality follows using Gaussian integration by parts.
Note that $C_{11}=0$. Furthermore, since $R_{12}=\lambda R_{12}^{\rho}+(1-\lambda)R_{12}^{\epsilon}$
with $\lambda=\frac{N}{N+M}$, we have 
\[
\xi(R_{12}) \leq \lambda\xi(R_{12}^{\rho})+(1-\lambda)\xi(R_{12}^{\epsilon})
\]
by convexity of $\xi$. Thus $\phi'_{\beta}(t)\geq 0$, so that
$\phi_{\beta}(0){\leq}\phi_{\beta}(1).$
Sending $\beta\to\infty$, this yields
\[
\E\max_{m\in [a-\eta, a+ \eta]}\frac{H_{N+M}(\sigma)}{N+M}\geq\E\max_{m\in [a- \eta, a+ \eta]}\frac{H_{N}+H_{M}}{N+M}
\]
Now note that since $m(\sigma)=\lambda m(\rho)+(1-\lambda)m(\epsilon)$,
we have that 
\[
\{\sigma=(\rho,\epsilon):m(\rho)\in [a-\eta, a+ \eta],m(\epsilon)\in [a-\eta,a+\eta]\}\subseteq\{\sigma:m(\sigma) \in [a-\eta, a+\eta]\}
\]
Consequently, $G_{N+M}(a;\eta)\geq\lambda G_{N}(a;\eta)+(1-\lambda)G_{M}(a;\eta).$
Thus the sequence $NG_{N}(a;\eta)$ is super-additive, so that $G_N(a;\eta)$ has a limit.

To establish continuity of $G(\cdot; \eta)$, fix $a,b \in [-1,1]$, and $\eta >0$. For each $\sigma \in \{ m(\sigma) \in [a-\eta, a+ \eta]\}$, let $\pi(\sigma)$ denote a configuration in $\{m(\sigma) \in [b - \eta, b+ \eta]\}$ which minimizes the Hamming distance. Similarly, for each $\sigma \in \{m(\sigma) \in [b-\eta, b+ \eta]\}$, let $\pi'(\sigma)$ denote the configuration with magnetization in $[a-\eta, a+ \eta]$ which minimizes the Hamming distance. Recall that $\mathrm{Var}(H_N(\sigma) - H_N(\tau) )= 2N (\xi(1) - \xi(R_{12} ))$, where $N \cdot R_{12} = (\sigma, \tau)$. Thus 
\begin{align}
\frac{1}{N} \max\Big\{ \max_{m\in[a-\eta, a+ \eta]} \mathrm{Var}\Big( H_N(\sigma) - H_N(\pi(\sigma))  \Big) , \max_{m\in[b-\eta, b+ \eta]} \mathrm{Var}\Big( H_N(\sigma) - H_N(\pi'(\sigma))  \Big) \Big\}\leq C(\xi') |a-b|\nonumber 
\end{align}
uniformly in $N$, where $C(\xi')$ is a universal constant dependent on $\xi'$. 
We have, 
\begin{align}
|G_N(a;\eta) - G_N(b; \eta)| \leq  \frac{1}{N} \E \max_{m\in[a-\eta, a+ \eta]} | H_N(\sigma) - H_N(\pi(\sigma)) | + \frac{1}{N} \mathbb{E} \max_{m\in[b-\eta, b+ \eta]}  | H_N(\sigma) - H_N(\pi'(\sigma)) |. \nonumber
\end{align}
Note that for any collection of dependent, centered gaussians $\{ Z_i: 1\leq i \leq r\}$, $\mathbb{E}[\max |Z_i|] \leq 2 \mathbb{E}[\max Z_i] \leq 2 \sqrt{2 \sigma^2 \log r}$, where $\sigma^2 = \max_{1\leq i \leq r} \mathrm{Var}(Z_i)$. Applying this bound individually to the two terms in the display above, with $r \leq 2^N$, we get 
\begin{align}
|G_N(a;\eta) - G_N(b; \eta)| \leq C'(\xi') \sqrt{|a-b|} \nonumber 
\end{align} 
for some universal constant $C'(\xi')>0$, uniformly in $N$. 
Finally, we let $N \to \infty$, and obtain the continuity of $G(\cdot;\eta)$.

To obtain concavity, let $\lambda\in[0,1]$ be rational and work along a subsequence
such that $\lambda N$ is an integer. 
 By the same interpolation
argument, we obtain that 
\[
G_{N}(\lambda a+(1-\lambda)b;\eta)\geq\lambda G_{\lambda N}(a;\eta)+(1-\lambda)G_{(1-\lambda)N}(b;\eta).
\]
Sending $N\to\infty$ yields
\[
G(\lambda a+(1-\lambda)b; \eta)\geq\lambda G(a; \eta)+(1-\lambda)G(b; \eta).
\]
Using the continuity of $G(\cdot; \eta)$, we obtain the concavity of  $G(\cdot;\eta)$ for each $\eta$. 

Finally, since the map $\eta \mapsto G_N(a;\eta)$
is increasing, we see $\eta\mapsto G(a;\eta)$ is as well. Thus the pointwise limit $G(a)=\lim_{\eta\to0} G(a;\eta)$
is well-defined, continuous, and concave.
\end{proof}
\begin{proof}[Proof of Theorem \ref{thm:constrained-GS-SK} ]
Since $G(a)$ is continuous, we may apply a standard covering argument
(along with the definitions of $G_{N}$ and $E_{N}$) to obtain
\[
E(h)=\max_{a\in[-1,1]}\{ha+G(a)\}.
\]
Extending $G(a)$ by $-\infty$ off of $[-1,1]$, we have, by
concavity of $G(a)$, 
\[
G(a)=\inf_{h}[E(h)-ha].
\]
The result then follows by \eqref{eq:E(h)-parisi-formula}. 
\end{proof}

\bibliographystyle{plain}
\bibliography{graphref}

\end{document}